\tikzset{
v/.style={draw, fill, circle, minimum size=1.5mm, inner sep=0},
b/.style={draw , regular polygon,regular polygon sides=4, minimum size=1.5mm, inner sep=.5mm},
e/.style={very thick},
vs/.style={draw, fill, circle, minimum size=1mm, inner sep=0},
bs/.style={draw,  regular polygon,regular polygon sides=4, minimum size=2mm, inner sep=0mm},
es/.style={thick}
}
\newlength{\nodeheight}
\newlength{\nodewidth}
\numberwithin{thmcounter}{section}
\newaliascnt{thmauto}{thmcounter}
\newaliascnt{Defauto}{thmcounter}
\newaliascnt{exauto}{thmcounter}
\newaliascnt{lemauto}{thmcounter}
\newaliascnt{propauto}{thmcounter}
\newaliascnt{corauto}{thmcounter}
\newaliascnt{remauto}{thmcounter}
\newaliascnt{clmauto}{thmcounter}
\newaliascnt{quesauto}{thmcounter}
\newtheorem{theorem}[thmauto]{Theorem}
\newtheorem{lemma}[lemauto]{Lemma}
\newtheorem{proposition}[propauto]{Proposition}
\newtheorem{corollary}[corauto]{Corollary}
\newtheorem{question}[quesauto]{Question}
\theoremstyle{definition}
\newtheorem{definition}[Defauto]{Definition}
\newtheorem{example}[exauto]{Example}
\newtheorem{remark}[remauto]{Remark}
\newcommand{\fakeenv}{} 
\newenvironment{restate}[2]  
{
  \renewcommand{\fakeenv}{#2}   
                                
  \theoremstyle{plain}
  \newtheorem*{\fakeenv}{#1~\ref{#2}} 
  \begin{\fakeenv}  
}
{ \end{\fakeenv} }
\newenvironment{rerestate}[3]  
{
  \renewcommand{\fakeenv}{#3}   
  \theoremstyle{plain}
  \newtheorem*{\fakeenv}{#1~\ref{#2}}  
  \begin{\fakeenv}  
}
{ \end{\fakeenv} }
\numberwithin{equation}{section}
\DeclareMathOperator{\im}{im}
\DeclareMathOperator{\coker}{coker}
\DeclareMathOperator{\Tor}{Tor}
\DeclareMathOperator{\Ext}{Ext}
\providecommand{\id}{\ensuremath\mathrm{id}}
\renewcommand{\Bar}{\mathrm{Bar}}
\DeclareMathOperator{\TL}{TL}
\DeclareMathOperator{\cell}{S}
\newcommand{\InnermostCupComplex}{\mathrm{Inn}}
\newcommand{\SubmaxInnermostCupComplex}{\mathrm{M}}
\newcommand{\AugmentedInnermostCupComplex}{\mathrm{Inn}^\text{aug}}
\newcommand{\Z}{\mathbb{Z}}
\newcommand{\OutermostCupComplex}{\mathrm{Out}}
\newcommand{\CPL}{L}
\newcommand{\derivedCell}{\mathrm{D}\mathrm{S}}
\newcommand{\derivedOutermost}{\mathrm{D}\mathrm{Out}}
\newcommand{\derivedInnermost}{\mathrm{D}\mathrm{Inn}}
\newcommand{\MayerVie}{V}
\newcommand{\Lmax}{L_\text{max}}
\definecolor{indiguy}{RGB}{195,179,227}
\title[Even Temperley--Lieb algebras and the dga of planar loops]{Even Temperley--Lieb algebras\\and the dga of planar loops}
\author{Rachael Boyd}
\address{School of Mathematics and Statistics, University of Glasgow, Glasgow G12 8QQ, UK}
\email{rachael.boyd@glasgow.ac.uk}
\author{Guy Boyde}
\address{Department of Mathematics, Vrije Universiteit Amsterdam, De Boelelaan 1111, 1081 HV Amsterdam, The Netherlands}
\email{g.boyde@vu.nl}
\author{Oscar Randal-Williams}
\thanks{}
\address{Centre for Mathematical Sciences, Wilberforce Road, Cambridge CB3 0WB, UK}
\email{or257@cam.ac.uk}
\author{Robin J. Sroka}
\thanks{}
\address{Mathematisches Institut, Universität Münster, Einsteinstrasse 62, 48149 Münster, Germany}
\email{robinjsroka@uni-muenster.de}
 \subjclass[2020]{
        20J06,
        16E40,
        16E45
    }
    \keywords{Homology, Temperley-Lieb algebras, differential graded algebras} 
\begin{document}

\begin{abstract}
    We show that the homology of a Temperley--Lieb algebra on an even number of strands has a rich algebraic structure and is highly nontrivial in general. This is achieved by proving that it is entirely governed by a differential graded algebra: the differential graded algebra of planar loops. We provide a small model for this dga, and use it to obtain consequences on homology.
\end{abstract}

\maketitle
\setcounter{tocdepth}{1}
\tableofcontents

\section{Introduction}
Given $R$ a unital commutative ring, $a \in R$ and $n \geq 0$ a natural number, the Temperley--Lieb algebra is a diagrammatic algebra denoted $\TL_n(R,a)$. The goal of this paper is to calculate the homology groups $\Tor_*^{\TL_n(R,a)}(R,R)$ of this algebra when $n$ is even, complementing the calculation \cite{Sroka} of the last-named author which treated the case $n$ odd. While the homology turned out to be trivial in positive degrees when $n$ is odd, this work shows that it has a rich structure when $n$ is even.

The core of the calculation is to describe the homology of an associated object, which we call the differential graded algebra of planar loops. This object can be introduced in a stand-alone way, so we first do so and describe our results in this direction, then later explain their connection to Temperley--Lieb algebras. 

\subsection{The differential graded algebra of planar loops}
We introduce the differential graded $R$-algebra (dga) of planar loops $\CPL(2n) = \CPL(2n;R,a)$. Its chain module in degree $p$, $\CPL(2n)_p$, is defined to be the free $R$-module on the set of all \emph{systems of planar loops (of height $2n$) pinned by $p$ bars}, described as follows.
\begin{itemize}
    \item Place $p$ vertical lines (\emph{bars}) in the plane. Mark $2n$ points on each bar: we call these \emph{nodes}.
    \item A basis element of $\CPL(2n)_p$ is an isotopy class of a number of disjoint circles (\emph{loops}) in the plane. Then we require that each loop contains at least two nodes, and that every node is in precisely one loop, see \cref{fig:whatIsB}. We think of the bars as `pinning' the loops.
\end{itemize} 
The differential of $\CPL(2n)$ is given as an alternating sum of face maps. The $k$-th face map removes the $k$-th bar: we take the resulting isotopy class (now with $p-1$ bars), and for each loop which is completely `unpinned' by the removal of this bar, we discard the loop and multiply by $a \in R$ (\cref{fig:whatIsB}, Part $\mathrm{(a)}$). Juxtaposing two such pictures defines a graded multiplication which gives $\CPL(2n)$ the structure of a dga (\cref{fig:whatIsB}, Part $\mathrm{(b)}$). 

The dga $\CPL(2n)$ also arises from a bar construction on the Temperley--Lieb algebra $\TL_{2n}(R,a)$, see \cref{definition: dga of planar loops}, and it is this algebraic description that we will work with in the main body of this article.

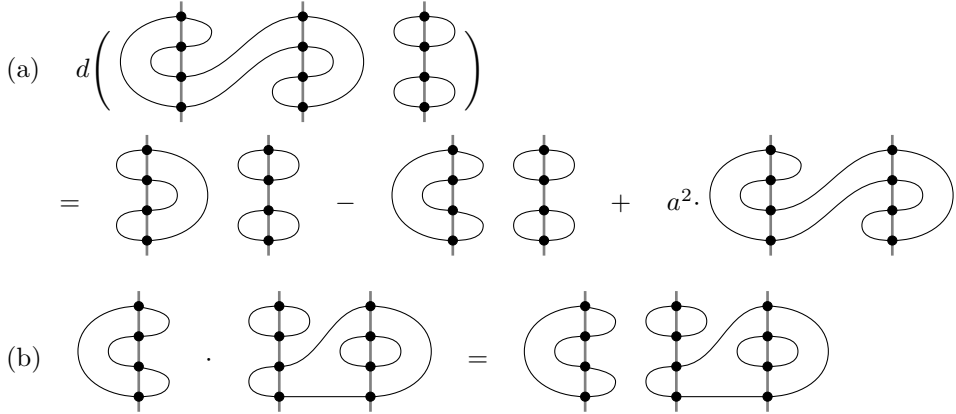
\begin{figure}[h!]
\begin{flushleft}
$\mathrm{(a)}$
\quad
$d \Biggl($
    \begin{tikzpicture}[scale=0.4, baseline=(base)]
        \def\widthscale{4};

        \coordinate (base) at (0,2);
        \draw[gray,line width = 1](0,0.5)--(0,4.5);
        \draw[gray,line width = 1](\widthscale,0.5)--(\widthscale,4.5);
        \draw[gray,line width = 1](2*\widthscale,0.5)--(2*\widthscale,4.5);

        \foreach \y in {1,2, 3,4}{
            \foreach \x in {0,1,2}{
                \draw[fill=black]  (\widthscale*\x,\y) circle [radius=0.15];
                \draw (\widthscale*\x,\y) node[right] {};
            }
            
        }

        \draw (0,2) to[out=180,in=-90] (-1,2.5) to[out=90,in=180] (0,3);
        \draw (0,3) to[out=0,in=-90] (1,3.5) to[out=90,in=180] (0,4);
        \draw (0,1) to[out=180,in=-90] (-2,2.5) to[out=90,in=180] (0,4);

        \draw (0,1) to[out=0,in=180] (\widthscale,3);
        \draw (0,2) to[out=0,in=180] (\widthscale,4);
        
        \draw (\widthscale,2) to[out=0,in=-90] (\widthscale+1,2.5) to[out=90,in=0] (\widthscale,3);
        \draw (\widthscale,1) to[out=180,in=-90] (\widthscale-1,1.5) to[out=90,in=180] (\widthscale,2);
        \draw (\widthscale,1) to[out=0,in=-90] (\widthscale+2,2.5) to[out=90,in=0] (\widthscale,4);

        \draw (2*\widthscale,1) to[out=180,in=-90] (2*\widthscale-1,1.5) to[out=90,in=180] (2*\widthscale,2);
        \draw (2*\widthscale,1) to[out=0,in=-90] (2*\widthscale+1,1.5) to[out=90,in=0] (2*\widthscale,2);

        \draw (2*\widthscale,3) to[out=180,in=-90] (2*\widthscale-1,3.5) to[out=90,in=180] (2*\widthscale,4);
        \draw (2*\widthscale,3) to[out=0,in=-90] (2*\widthscale+1,3.5) to[out=90,in=0] (2*\widthscale,4);
    \end{tikzpicture}
$\Biggr) $
\\
\vspace{0.1cm}
\quad
\quad
$=$
\quad
    \begin{tikzpicture}[scale=0.4, baseline=(base)]
        \def\widthscale{4};

        \coordinate (base) at (0,2);
        \draw[gray,line width = 1](\widthscale,0.5)--(\widthscale,4.5);
        \draw[gray,line width = 1](2*\widthscale,0.5)--(2*\widthscale,4.5);

        \foreach \y in {1,2, 3,4}{
            \foreach \x in {1,2}{
                \draw[fill=black]  (\widthscale*\x,\y) circle [radius=0.15];
                \draw (\widthscale*\x,\y) node[right] {};
            }
            
        }

        \draw (\widthscale,3) to[out=180,in=-90] (\widthscale-1,3.5) to[out=90,in=180] (\widthscale,4);
        
        \draw (\widthscale,2) to[out=0,in=-90] (\widthscale+1,2.5) to[out=90,in=0] (\widthscale,3);
        \draw (\widthscale,1) to[out=180,in=-90] (\widthscale-1,1.5) to[out=90,in=180] (\widthscale,2);
        \draw (\widthscale,1) to[out=0,in=-90] (\widthscale+2,2.5) to[out=90,in=0] (\widthscale,4);

        \draw (2*\widthscale,1) to[out=180,in=-90] (2*\widthscale-1,1.5) to[out=90,in=180] (2*\widthscale,2);
        \draw (2*\widthscale,1) to[out=0,in=-90] (2*\widthscale+1,1.5) to[out=90,in=0] (2*\widthscale,2);

        \draw (2*\widthscale,3) to[out=180,in=-90] (2*\widthscale-1,3.5) to[out=90,in=180] (2*\widthscale,4);
        \draw (2*\widthscale,3) to[out=0,in=-90] (2*\widthscale+1,3.5) to[out=90,in=0] (2*\widthscale,4);
    \end{tikzpicture}
\quad
$- $
\quad
    \begin{tikzpicture}[scale=0.4, baseline=(base)]
        \def\widthscale{3};

        \coordinate (base) at (0,2);
        \draw[gray,line width = 1](0,0.5)--(0,4.5);
        \draw[gray,line width = 1](\widthscale,0.5)--(\widthscale,4.5);

        \foreach \y in {1,2, 3,4}{
            \foreach \x in {0,1}{
                \draw[fill=black]  (\widthscale*\x,\y) circle [radius=0.15];
                \draw (\widthscale*\x,\y) node[right] {};
            }
            
        }

        \draw (0,2) to[out=180,in=-90] (-1,2.5) to[out=90,in=180] (0,3);
        \draw (0,3) to[out=0,in=-90] (1,3.5) to[out=90,in=180] (0,4);
        \draw (0,1) to[out=180,in=-90] (-2,2.5) to[out=90,in=180] (0,4);
        \draw (0,1) to[out=0,in=-90] (1,1.5) to[out=90,in=180] (0,2);

        \draw (\widthscale,1) to[out=180,in=-90] (\widthscale-1,1.5) to[out=90,in=180] (\widthscale,2);
        \draw (\widthscale,1) to[out=0,in=-90] (\widthscale+1,1.5) to[out=90,in=0] (\widthscale,2);

        \draw (\widthscale,3) to[out=180,in=-90] (\widthscale-1,3.5) to[out=90,in=180] (\widthscale,4);
        \draw (\widthscale,3) to[out=0,in=-90] (\widthscale+1,3.5) to[out=90,in=0] (\widthscale,4);
    \end{tikzpicture}
\quad
$+$ 
\quad
$a^2 \cdot$
    \begin{tikzpicture}[scale=0.4, baseline=(base)]
        
        \def\widthscale{4};

        \coordinate (base) at (0,2);
        \draw[gray,line width = 1](0,0.5)--(0,4.5);
        \draw[gray,line width = 1](\widthscale,0.5)--(\widthscale,4.5);

        \foreach \y in {1,2, 3,4}{
            \foreach \x in {0,1}{
                \draw[fill=black]  (\widthscale*\x,\y) circle [radius=0.15];
                \draw (\widthscale*\x,\y) node[right] {};
            }
            
        }

        \draw (0,2) to[out=180,in=-90] (-1,2.5) to[out=90,in=180] (0,3);
        \draw (0,3) to[out=0,in=-90] (1,3.5) to[out=90,in=180] (0,4);
        \draw (0,1) to[out=180,in=-90] (-2,2.5) to[out=90,in=180] (0,4);

        \draw (0,1) to[out=0,in=180] (\widthscale,3);
        \draw (0,2) to[out=0,in=180] (\widthscale,4);
        
        \draw (\widthscale,2) to[out=0,in=-90] (\widthscale+1,2.5) to[out=90,in=0] (\widthscale,3);
        \draw (\widthscale,1) to[out=180,in=-90] (\widthscale-1,1.5) to[out=90,in=180] (\widthscale,2);
        \draw (\widthscale,1) to[out=0,in=-90] (\widthscale+2,2.5) to[out=90,in=0] (\widthscale,4);
    \end{tikzpicture}
\\
\vspace{0.4cm}
$\mathrm{(b)}$
\quad
    \begin{tikzpicture}[scale=0.4, baseline=(base)]
        
        \def\widthscale{3};

        \coordinate (base) at (0,2);
        \draw[gray,line width = 1](0,0.5)--(0,4.5);

        \foreach \y in {1,2, 3,4}{
            \foreach \x in {0}{
                \draw[fill=black]  (\widthscale*\x,\y) circle [radius=0.15];
                \draw (\widthscale*\x,\y) node[right] {};
            }
            
        }

        \draw (0,2) to[out=180,in=-90] (-1,2.5) to[out=90,in=180] (0,3);
        \draw (0,3) to[out=0,in=-90] (1,3.5) to[out=90,in=180] (0,4);
        \draw (0,1) to[out=180,in=-90] (-2,2.5) to[out=90,in=180] (0,4);
        \draw (0,1) to[out=0,in=-90] (1,1.5) to[out=90,in=180] (0,2);
    \end{tikzpicture}
\quad
$\cdot$
\quad
     \begin{tikzpicture}[scale=0.4, baseline=(base)]
        \def\widthscale{3};

        \coordinate (base) at (0,2);
        \draw[gray,line width = 1](\widthscale,0.5)--(\widthscale,4.5);
        \draw[gray,line width = 1](2*\widthscale,0.5)--(2*\widthscale,4.5);

        \foreach \y in {1,2, 3,4}{
            \foreach \x in {1,2}{
                \draw[fill=black]  (\widthscale*\x,\y) circle [radius=0.15];
                \draw (\widthscale*\x,\y) node[right] {};
            }
            
        }

        \draw (\widthscale,3) to[out=180,in=-90] (\widthscale-1,3.5) to[out=90,in=180] (\widthscale,4);
        \draw (\widthscale,1) to[out=180,in=-90] (\widthscale-1,1.5) to[out=90,in=180] (\widthscale,2);
        \draw (\widthscale,3) to[out=0,in=-90] (\widthscale+1,3.5) to[out=90,in=0] (\widthscale,4);

        \draw (\widthscale,1) to[out=0,in=180] (2*\widthscale,1);
        \draw (\widthscale,2) to[out=0,in=180] (2*\widthscale,4);

        \draw (2*\widthscale,2) to[out=0,in=-90] (2*\widthscale+1,2.5) to[out=90,in=0] (2*\widthscale,3);
        \draw (2*\widthscale,2) to[out=180,in=-90] (2*\widthscale-1,2.5) to[out=90,in=180] (2*\widthscale,3);
        \draw (2*\widthscale,1) to[out=0,in=-90] (2*\widthscale+2,2.5) to[out=90,in=0] (2*\widthscale,4);
    \end{tikzpicture}
\quad
$=$
\quad
        \begin{tikzpicture}[scale=0.4, baseline=(base)]
       
        \def\widthscale{3};

        \coordinate (base) at (0,2);
        \draw[gray,line width = 1](0,0.5)--(0,4.5);
        \draw[gray,line width = 1](\widthscale,0.5)--(\widthscale,4.5);
        \draw[gray,line width = 1](2*\widthscale,0.5)--(2*\widthscale,4.5);

        \foreach \y in {1,2, 3,4}{
            \foreach \x in {0,1,2}{
                \draw[fill=black]  (\widthscale*\x,\y) circle [radius=0.15];
                \draw (\widthscale*\x,\y) node[right] {};
            }
            
        }

        \draw (0,2) to[out=180,in=-90] (-1,2.5) to[out=90,in=180] (0,3);
        \draw (0,3) to[out=0,in=-90] (1,3.5) to[out=90,in=180] (0,4);
        \draw (0,1) to[out=180,in=-90] (-2,2.5) to[out=90,in=180] (0,4);
        \draw (0,1) to[out=0,in=-90] (1,1.5) to[out=90,in=180] (0,2);
        
        \draw (\widthscale,3) to[out=180,in=-90] (\widthscale-1,3.5) to[out=90,in=180] (\widthscale,4);
        \draw (\widthscale,1) to[out=180,in=-90] (\widthscale-1,1.5) to[out=90,in=180] (\widthscale,2);
        \draw (\widthscale,3) to[out=0,in=-90] (\widthscale+1,3.5) to[out=90,in=0] (\widthscale,4);

        \draw (\widthscale,1) to[out=0,in=180] (2*\widthscale,1);
        \draw (\widthscale,2) to[out=0,in=180] (2*\widthscale,4);

        \draw (2*\widthscale,2) to[out=0,in=-90] (2*\widthscale+1,2.5) to[out=90,in=0] (2*\widthscale,3);
        \draw (2*\widthscale,2) to[out=180,in=-90] (2*\widthscale-1,2.5) to[out=90,in=180] (2*\widthscale,3);
        \draw (2*\widthscale,1) to[out=0,in=-90] (2*\widthscale+2,2.5) to[out=90,in=0] (2*\widthscale,4);
    \end{tikzpicture}
\end{flushleft}

    \caption{With $2n=4$: $\mathrm{(a)}$ a typical basis element of $\CPL(4)_3$, with the differential $d$ applied (`barwise') to that element, and $\mathrm{(b)}$ the (`juxtaposition') product.}
    \label{fig:whatIsB}
\end{figure}

Our main theorem gives a small model for the dga of planar loops. To state it we introduce the notation $\Phi \in \CPL(2n;R,a)_1$ for the element shown in \cref{fig: PhiInIntro}. Its key feature is that the underlying diagram consists of a single loop, so it satisfies $d(\Phi)=a$, and is a cycle if and only if $a=0$. 

    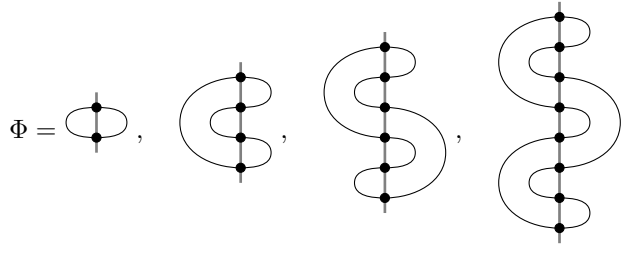
\begin{figure}[h!]
    $\Phi =$
        \begin{tikzpicture}[scale=0.4, baseline=(base)]
        \coordinate (base) at (0,1);
        
        \draw[gray,line width = 1](6,0.5)--(6,2.5);

        \foreach \y in {1,2}{
            \draw[fill=black]  (6,\y) circle [radius=0.15];
            \draw (6,\y) node[right] {};
        }
        
        \draw (6,1) to[out=0,in=-90] (7,1.5) to[out=90,in=0] (6,2);
        \draw (6,1) to[out=180,in=-90] (5,1.5) to[out=90,in=180] (6,2);
    \end{tikzpicture}
    ,
    \quad
        \begin{tikzpicture}[scale=0.4, baseline=(base)]
        \coordinate (base) at (0,2);
        
        \draw[gray,line width = 1](6,0.5)--(6,4.5);

        \foreach \y in {1,2, 3,4}{
            \draw[fill=black]  (6,\y) circle [radius=0.15];
            \draw (6,\y) node[right] {};
        }

                 \foreach \x in {1}{
        \draw (6,\x) to[out=180,in=-90] (4,\x + 1.5) to[out=90,in=180] (6,\x + 3);
        \draw (6,\x+1) to[out=180,in=-90] (5,\x + 1.5) to[out=90,in=180] (6,\x + 2);
        }

        \draw (6,1) to[out=0,in=-90] (7,1.5) to[out=90,in=0] (6,2);
        
        \draw (6,3) to[out=0,in=-90] (7,3.5) to[out=90,in=0] (6,4);
    \end{tikzpicture}
    ,
    \quad
        \begin{tikzpicture}[scale=0.4, baseline=(base)]
        \coordinate (base) at (0,3);
        
        \draw[gray,line width = 1](6,0.5)--(6,6.5);

        \foreach \y in {1,2, 3,4,5,6}{
            \draw[fill=black]  (6,\y) circle [radius=0.15];
            \draw (6,\y) node[right] {};
        }

                 \foreach \x in {3}{
        \draw (6,\x) to[out=180,in=-90] (4,\x + 1.5) to[out=90,in=180] (6,\x + 3);
        \draw (6,\x+1) to[out=180,in=-90] (5,\x + 1.5) to[out=90,in=180] (6,\x + 2);
        }

        \draw (6,1) to[out=180,in=-90] (5,1.5) to[out=90,in=180] (6,2);

                 \foreach \x in {1}{
        \draw (6,\x) to[out=0,in=-90] (8,\x + 1.5) to[out=90,in=0] (6,\x + 3);
        \draw (6,\x+1) to[out=0,in=-90] (7,\x + 1.5) to[out=90,in=0] (6,\x + 2);
        }
        
        \draw (6,5) to[out=0,in=-90] (7,5.5) to[out=90,in=0] (6,6);
    \end{tikzpicture}
    ,
    \quad
        \begin{tikzpicture}[scale=0.4, baseline=(base)]
        \coordinate (base) at (0,4);
        
        \draw[gray,line width = 1](6,0.5)--(6,8.5);

        \foreach \y in {1,2, 3,4,5,6,7,8}{
            \draw[fill=black]  (6,\y) circle [radius=0.15];
            \draw (6,\y) node[right] {};
        }

                 \foreach \x in {1,5}{
        \draw (6,\x) to[out=180,in=-90] (4,\x + 1.5) to[out=90,in=180] (6,\x + 3);
        \draw (6,\x+1) to[out=180,in=-90] (5,\x + 1.5) to[out=90,in=180] (6,\x + 2);
        }

         \foreach \x in {1,4,7}{
        \draw (6,\x) to[out=0,in=-90] (7,\x + 0.5) to[out=90,in=0] (6,\x + 1);
        }
        
        \draw (6,3) to[out=0,in=-90] (8,3 + 1.5) to[out=90,in=0] (6,3 + 3);
    \end{tikzpicture}
    \caption{The element $\Phi \in \CPL(2n)_1$, shown (left-to-right) for $2n=2,4,6,8$.}\label{fig: PhiInIntro}
\end{figure}

\begin{theorem}\label{thm:Main}
There is a weak equivalence
$$(T_R[x_1, x_3, \ldots, x_{2 n-1}], d) \overset{\sim}\to \CPL(2n; R,a)$$
of differential graded $R$-algebras, where the left-hand side is the tensor algebra on the generators $x_i$ of degree $i$ equipped with the differential given by 
$$d(x_1)=a \text{ and } d(x_{2i-1}) = \sum_{\substack{j+k=i \\ j,k>0}} \binom{i}{j} x_{2j-1}x_{2k-1} \text{ for } 2i-1>1.$$
Under this equivalence $x_1$ is mapped to $\Phi$.
\end{theorem}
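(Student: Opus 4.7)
The plan is to construct the dga map explicitly on generators, extend multiplicatively by freeness of the tensor algebra, and then prove it is a quasi-isomorphism by an induction on $n$ built from a filtration of $\CPL(2n;R,a)$ by combinatorial complexity of the underlying loop system.

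Since $T_R[x_1, x_3, \ldots, x_{2n-1}]$ is freely generated as a graded $R$-algebra, any graded algebra homomorphism to $\CPL(2n;R,a)$ is determined by a choice of elements $y_{2i-1} \in \CPL(2n;R,a)_{2i-1}$ for $i = 1, \ldots, n$, and by freeness the assignment is a chain map iff $d(y_1) = a$ and $d(y_{2i-1}) = \sum_{j+k=i,\, j,k>0} \binom{i}{j} y_{2j-1} y_{2k-1}$ for $i > 1$. Setting $y_1 := \Phi$ handles the first equation (as observed just before the theorem). The remaining $y_{2i-1}$ are built inductively: given $y_1, \ldots, y_{2i-3}$, the element $\zeta_i := \sum_{j+k=i,\, j,k>0} \binom{i}{j} y_{2j-1} y_{2k-1}$ is a cycle of degree $2i-2$ (the identity $d(\zeta_i) = 0$ is precisely the $d^2 = 0$ relation on the left-hand side), so it remains to exhibit a bounding chain in $\CPL(2n)_{2i-1}$. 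A natural candidate for $y_{2i-1}$ is a suitably symmetrized ``nested'' analogue of $\Phi$ spread over $2i-1$ bars, the type of diagrammatic object whose differential produces the required binomial coefficients.

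The more substantial task is to show that the resulting map $\phi$ is a quasi-isomorphism, which amounts to computing $H_*(\CPL(2n;R,a))$. I would filter $\CPL(2n)$ by an invariant of the outermost loops --- as the presence of the auxiliary complexes $\OutermostCupComplex$ and $\InnermostCupComplex$ in the preamble suggests --- and use a Mayer--Vietoris-style argument to identify each stratum in terms of smaller instances $\CPL(2n')$ with $n' < n$. Pairing this with a word-length filtration on the free dga side, which $\phi$ clearly preserves, one gets a map of spectral sequences and an induction on $n$; the base case $n=1$ is a direct calculation on $\TL_2$.

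The main obstacle will be the homological work underlying the spectral sequence argument: establishing the necessary acyclicity or high connectivity of $\OutermostCupComplex$ and $\InnermostCupComplex$, checking that the resulting spectral sequence degenerates at a computable page, and verifying that its $E^\infty$-page matches the associated graded of the free-algebra model. A secondary point is the explicit inductive construction of the bounding chains $y_{2i-1}$: the divided-power-like binomial structure of the differential strongly hints that these arise from a symmetrization over the orderings of $i$ nested copies of $\Phi$, and pinning down this symmetrization requires careful combinatorial bookkeeping.
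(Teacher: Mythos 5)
There is a genuine gap, and it sits exactly where you have placed your hopes. Your construction of the dga map requires explicit bounding chains $y_{2i-1}\in\CPL(2n)_{2i-1}$ with $d(y_{2i-1})=\sum\binom{i}{j}y_{2j-1}y_{2k-1}$, and you propose to find them as ``suitably symmetrized nested analogues of $\Phi$.'' No such diagrammatic description is known: the paper explicitly lists the diagrammatic interpretation of the generators $x_i$ as an open problem (\cref{question: diagrammatics}). The actual proof never writes down cycles in $\CPL(2n)$ beyond $\Phi$ itself; instead it runs an obstruction-theoretic induction. \Cref{theorem:derived-outermost-cup-complex-resolution} produces a free resolution of $R$ over $\CPL(2n)$ whose $i$-th term is $\Sigma^{i,i}\CPL(2n)$ with boundary maps given by right multiplication by $i\Phi$; this computes $\mathbb{Z}\otimes^{\mathbb{L}}_{\CPL(2n)}\mathbb{Z}$ (a tiny object, one $\mathbb{Z}$ in each bidegree $(2i,i)$), and a Hurewicz principle for dgas (\cref{lemma: hurewicz}) then shows that $\CPL(2n)$ is obtained from $\CPL(2n-2)$ by attaching a single quasi-free cell of degree $2n-1$ (\cref{prop:Indec}). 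The existence of $x_{2n-1}$ and of its attaching cycle is thereby guaranteed abstractly, with no diagram ever exhibited. The integer coefficients $\binom{i}{j}$ are then pinned down not by inspecting a chain but by a rigidity argument: the cycles in $M(2n)_{2n,n+1}$ form a rank-one module, detected injectively in $M(2n)\otimes^{\mathbb{L}}_{M(2n-2)}\mathbb{Z}$, and the order of $H_{2n,n+1}(\CPL(2n+2)\otimes^{\mathbb{L}}_{\CPL(2n-2)}\mathbb{Z})\cong\mathbb{Z}/(n+1)$ (\cref{lem:CalcDivisbility}) forces the attaching class to be $\pm\sum\binom{n+1}{j}x_{2j-1}x_{2k-1}$.

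Your plan for the quasi-isomorphism step also does not go through as stated. Proving $\phi$ is an equivalence does \emph{not} reduce to ``computing $H_*(\CPL(2n;R,a))$'' independently and comparing: that homology is the output of the theorem and is genuinely complicated (for $2n=4$ over $\mathbb{Z}$ it contains infinitely much $2$-torsion with a nontrivial Poincar\'e series, per \cref{cor:MainCalculations}(e)), so no direct stratification computation is available. Nor does the paper decompose $\CPL(2n)$ into smaller instances $\CPL(2n')$ by a Mayer--Vietoris argument on loop systems; the acyclic complexes $\OutermostCupComplex$ and $\InnermostCupComplex$ are used to resolve the trivial module $R$ over $\CPL(2n)$ (and the cell modules over $\TL_{2n}$), not to filter the algebra itself. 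The correct order of logic is: resolution $\Rightarrow$ computation of the derived indecomposables $\mathbb{Z}\otimes^{\mathbb{L}}_{\CPL(2n)}\mathbb{Z}$ $\Rightarrow$ Hurewicz $\Rightarrow$ model, with the general $(R,a)$ case deduced at the very end by base change from the universal case $(\mathbb{Z}[a],a)$, which is itself lifted from $(\mathbb{Z},0)$ using the weight grading. Without some substitute for the Hurewicz step, your spectral sequence comparison has no identified $E^\infty$-page to converge to.
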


This is a quasi-free (hence cofibrant) model for $\CPL(2n;R,a)$, and is minimal in any reasonable sense. The model is completely explicit and can be directly used to calculate $H_*(\CPL(2n;R,a))$ to some extent. However, as we shall see in \cref{subsection:calculations-the-parameter-zero-case}, the most powerful computational tool is to observe that in the case $a=0$ this model is simply the cobar construction of an explicit coalgebra over $R$. The $R$-linear dual of this coalgebra is the $R$-algebra
$$A_n := \Gamma_R[y]/(y^{[i]}, i \geq n+1),$$
i.e.\ a truncated divided power algebra. This leads to an algebra isomorphism
$$H_*(\CPL(2n;R,0)) \cong \Ext^*_{A_n}(R,R).$$
Hence, the homology of $\CPL(2n;R,0)$ may be calculated by a minimal resolution of $R$ as an $A_n$-module, instead of the cobar complex $(T_R[x_1, x_3, \ldots, x_{2 n-1}], d)$. This renders calculations much more tractable. In \cref{sec:ChangingCoeff} we describe some simple tools for reducing calculations over $(R,a)$ to calculations over $(\mathbb{Z}[a],a)$, and then to calculations over $(\mathbb{Z}, 0)$, so treating the case $a=0$ is not a major loss of generality. 

We include the following selection of explicit calculations obtained using the model, the observation above, and several other techniques. We will justify these calculations in \cref{sec:Calc}, where we give slightly more precise statements. 

\begin{corollary}\label{cor:MainCalculations}\mbox{}
\begin{enumerate}
\item $H_0(\CPL(2n;R,a)) = R/a$, so the $R$-module structure on $H_i(\CPL(2n;R,a))$ descends to $R/a$, i.e.\ the homology groups are all annihilated by $a$.

\item For $2n=2$ there is an isomorphism
$$H_{*}(\CPL(2; R, 0)) \cong R[\Phi]$$
of graded algebras, with $|\Phi| = 1$.

\item If $n! \in R^\times$ and $2n \geq 4$ then there is an isomorphism
$$H_{*}(\CPL(2n; R, 0)) \cong R[\Phi,  \alpha]/(\Phi^2)$$
of graded algebras, with $|\Phi| = 1$ and $|\alpha| = 2n$. Furthermore, the class $\alpha$ may be described as the $(n + 1)$-fold Massey product $\langle \Phi, \Phi, \ldots, \Phi \rangle$.

\item If $a \in R$ is not a zerodivisor and $n! \in (R/a)^\times$ then there is an isomorphism 
$$H_{*}(\CPL(2n; R, a)) \cong R/a[\beta]$$
of graded algebras, with $|\beta|=2n$. The class $\beta$ is uniquely determined by the property that under the change-of-rings map for $(R,a) \to (R/a,0)$ it maps to the class $\alpha \in H_{2n}(\CPL(2n ; R/a, 0))$ described in (c) (or to $\Phi^2$ if $2n=2$).

\item $H_*(\CPL(4;\mathbb{Z},0))$ only has 2-torsion, which is all simple. Modulo torsion there is an isomorphism
$$H_*(\CPL(4;\mathbb{Z},0))/\text{torsion} \cong \mathbb{Z}[\Phi, \gamma]/(\Phi^2)$$
of graded algebras, with $|\Phi|=1$ and $|\gamma|=4$. We may take $\gamma := \langle \Phi, 2\Phi, \Phi\rangle$. The Poincar{\'e} series for the $\mathbb{Z}/2$-dimension of the simple 2-torsion is
$$\frac{t^2}{(1-t - t^3 ) (1 - t^4)} = t^2 + t^3 + t^4 + 2 t^5 + 4 t^6 + 5 t^7 + 7 t^8
+ \cdots.$$

\end{enumerate}
\end{corollary}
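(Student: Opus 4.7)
The plan is to deduce all five parts from the quasi-free model of \cref{thm:Main}, supplemented by the interpretation (established in \cref{subsection:calculations-the-parameter-zero-case}) of the $a = 0$ model as the cobar construction on the coalgebra dual to the truncated divided power algebra $A_n = \Gamma_R[y]/(y^{[i]}, i \geq n+1)$, giving the algebra isomorphism $H_*(\CPL(2n;R,0)) \cong \Ext^*_{A_n}(R,R)$. I would also use the change-of-coefficients tools of \cref{sec:ChangingCoeff} to pass between choices of $(R,a)$.

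Parts (a) and (b) follow directly from the model: in degree $0$ the model is $R$, and the only differential landing there is $d(x_1)=a$, so $H_0 = R/a$, and since the model is a dga, $H_*$ is an algebra over $H_0 = R/a$, giving the annihilation statement in (a); for $2n=2$ the model collapses to $(R[x_1],0)$, with homology $R[\Phi]$. For part (c), the hypothesis $n! \in R^\times$ makes $A_n$ the ordinary truncated polynomial algebra $R[y]/y^{n+1}$, whose Ext algebra is computed by the standard minimal free resolution of $R$ alternating between multiplication by $y$ and by $y^n$. Tracking internal degrees yields generators $\Phi$ in total degree $1$ and $\alpha$ in total degree $2n$; the relation $\Phi^2 = 0$ follows from bidegree considerations, since for $n \geq 2$ the bidegree $(2,4)$ slot of $\Ext^*$ where $\Phi^2$ would live is empty ($\Ext^2$ being concentrated in internal degree $2(n+1) \geq 6$). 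To realize $\alpha$ as the $(n+1)$-fold Massey product $\langle \Phi, \ldots, \Phi \rangle$, I would assemble the generators $x_1, x_3, \ldots, x_{2n-1}$ into a defining system using the relations $d(x_{2i-1}) = \sum_{j+k=i,\, j,k>0}\binom{i}{j} x_{2j-1} x_{2k-1}$, with output representing $\alpha$.

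For part (d), since $a$ is a nonzerodivisor the short exact sequence
\[
0 \to \CPL(2n;R,a) \xrightarrow{a} \CPL(2n;R,a) \to \CPL(2n;R/a,0) \to 0
\]
of chain complexes induces a long exact sequence in homology, and by part (a) the multiplication-by-$a$ map is zero on homology, so this splits into short exact sequences
\[
0 \to H_i(\CPL(2n;R,a)) \to H_i(\CPL(2n;R/a,0)) \xrightarrow{\delta} H_{i-1}(\CPL(2n;R,a)) \to 0.
\]
A chain-level calculation gives $\delta[\Phi] = 1 \in R/a$ from $d(x_1) = a$; inductively, if $\tilde\beta^k$ is a chain representative of a lift $\beta^k$ of $\alpha^k$, then $d(x_1 \tilde\beta^k) = a\tilde\beta^k$ shows $\delta[\Phi \alpha^k] = \beta^k$. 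Hence $\delta$ is surjective in every odd degree $2nk+1$, which forces $H_{2nk+1}(\CPL(2n;R,a)) = 0$ and $H_{2n(k+1)}(\CPL(2n;R,a)) \cong R/a$, generated by a class $\beta^{k+1}$ that is the unique lift of $\alpha^{k+1}$; multiplicativity of the lift makes this agree with $\beta \cdot \beta^k$, so $H_*(\CPL(2n;R,a)) \cong (R/a)[\beta]$.

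For part (e), with $R = \mathbb{Z}$, $n = 2$, and $a = 0$, I would compute $\Ext^*_{A_2}(\mathbb{Z}, \mathbb{Z})$ by an explicit small resolution of $\mathbb{Z}$ over $A_2 = \mathbb{Z}\{1, y, y^{[2]}\}$ with its integral relations ($y^2 = 2y^{[2]}$, $y \cdot y^{[2]} = 0$, $(y^{[2]})^2 = 0$). Rationally and modulo any odd prime $p$ the algebra $A_2$ becomes $\mathbb{F}_p[y]/y^3$, so part (c) gives $\Ext = \mathbb{F}_p[\Phi, \alpha]/(\Phi^2)$, identifying the free part as $\mathbb{Z}[\Phi, \gamma]/(\Phi^2)$ and ruling out odd-prime torsion. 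Modulo $2$, the augmentation ideal of $A_2 \otimes \mathbb{F}_2$ has trivial multiplication, so its Ext algebra is the free associative algebra $\mathbb{F}_2\langle \Phi, x_3 \rangle$ with Hilbert series $1/(1-t-t^3)$; solving the universal coefficient equation $(1+t)T(t) = 1/(1-t-t^3) - (1+t)/(1-t^4)$ yields exactly $T(t) = t^2/((1-t-t^3)(1-t^4))$ for the $2$-torsion Poincaré series. Simplicity of the $2$-torsion (no $\mathbb{Z}/4$ or higher summands) would be checked via a Bockstein spectral sequence argument or by direct inspection of the integral resolution, and I expect this verification to be the main technical obstacle. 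The identification $\gamma = \langle \Phi, 2\Phi, \Phi \rangle$ follows because $d(x_3) = 2 x_1^2$ in the integral model provides $x_3$ as the defining cochain for this triple Massey bracket.
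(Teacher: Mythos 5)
Your treatment of parts (a)--(d) follows essentially the same route as the paper: (a) and (b) are read off the model, (c) is the computation of $\Ext$ over $R[y]/(y^{n+1})$ via the standard two-periodic resolution together with a Massey product argument inside the model (the paper's \cref{lemma: Massey} makes your ``defining system assembled from the generators'' precise: the entries must be the rescaled classes $\tfrac{1}{i!}x_{2i-1}$, and one also needs to observe that the resulting cycle has unit content so that it actually generates $H_{2n,n+1}$), and (d) is the same argument from the short exact sequences of \cref{sec:ChangingCoeff}. These parts are essentially correct and match the paper.

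The genuine gap is in part (e), and it sits exactly where you flag it. Your Poincar\'{e} series computation solves $h(t)=g(t)+(1+t)f(t)$, but that identity is only valid once the $2$-torsion is already known to be simple; and your identification of the torsion-free quotient as $\mathbb{Z}[\Phi,\gamma]/(\Phi^2)$ requires knowing that $\gamma^k$ and $\Phi\gamma^k$ are not divisible by $2$ modulo torsion --- the $\mathbb{Z}[\tfrac{1}{2}]$ computation alone cannot decide this, since there $\gamma=2\alpha$ and either of $\alpha$ or $\gamma$ could a priori span the integral lattice in each degree. Both issues are resolved in the paper by a single computation that your sketch omits: the homology of $(T_{\mathbb{Z}/2}[x_1,x_3],\beta)$ with respect to the integral Bockstein, shown in \cref{prop:BocksteinHomology} to be $\mathbb{Z}/2[\Phi,\gamma]/(\Phi^2)$ by means of a long exact sequence induced by left multiplication by $x_1$. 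Comparing its dimensions with the ranks over $\mathbb{Z}[\tfrac{1}{2}]$ gives simplicity of the torsion (via the criterion that all torsion is simple $2$-torsion iff $\dim_{\mathbb{Z}/2}\beta H_n$ equals the integral rank), and the natural map $H_*/\text{torsion}\to\beta H_*$ detects $\gamma^k$ and $\Phi\gamma^k$, giving the required non-divisibility by $2$. So ``a Bockstein spectral sequence argument'' is indeed the right tool, but it is not a routine verification to be deferred: it is the substantive step of the proof of (e) and needs to be carried out before the rest of your argument for that part becomes valid.
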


\subsection{Relation to the homology of Temperley--Lieb algebras}

The algebra of planar loops arose naturally while we were investigating the homology, by which we mean $\Tor_*^{\TL_{2n}(R,a)}(R,R)$, of the Temperley--Lieb algebra $\TL_{2n}=\TL_{2n}(R,a)$. Recall that these algebras are free $R$-modules on a basis of Temperley--Lieb $(2n,2n)$-diagrams. These diagrams correspond to planar matchings between two bars each with $2n$ nodes, with a multiplication which concatenates diagrams and replaces loops with factors of $a$. An example multiplication is shown in \cref{fig: TLalgsInIntro}.

\begin{figure}[h]

        \begin{tikzpicture}[scale=0.4, baseline=(base)]
        \def\widthscale{4};

        \coordinate (base) at (0,2);
        \draw[gray,line width = 1](0,0.5)--(0,4.5);
        \draw[gray,line width = 1](\widthscale,0.5)--(\widthscale,4.5);

        \foreach \y in {1,2, 3,4}{
            \foreach \x in {0,1}{
                \draw[fill=black]  (\widthscale*\x,\y) circle [radius=0.15];
                \draw (\widthscale*\x,\y) node[right] {};
            }
            
        }

        \draw (0,3) to[out=0,in=-90] (1,3.5) to[out=90,in=0] (0,4);
        
        \draw (0,1) to[out=0,in=180] (\widthscale,1);
        \draw (0,2) to[out=0,in=180] (\widthscale,4);

        \draw (\widthscale,2) to[out=180,in=-90] (\widthscale-1,2.5) to[out=90,in=180] (\widthscale,3);
        \end{tikzpicture}
        \
        $\cdot$
        \
        \begin{tikzpicture}[scale=0.4, baseline=(base)]
        \def\widthscale{4};

        \coordinate (base) at (0,2);
        \draw[gray,line width = 1](0,0.5)--(0,4.5);
        \draw[gray,line width = 1](\widthscale,0.5)--(\widthscale,4.5);

        \foreach \y in {1,2, 3,4}{
            \foreach \x in {0,1}{
                \draw[fill=black]  (\widthscale*\x,\y) circle [radius=0.15];
                \draw (\widthscale*\x,\y) node[right] {};
            }
            
        }

        \draw (0,2) to[out=0,in=-90] (1,2.5) to[out=90,in=0] (0,3);
        \draw (0,1) to[out=0,in=-90] (2,2.5) to[out=90,in=0] (0,4);
        
        \draw (\widthscale,1) to[out=180,in=-90] (\widthscale-1,1.5) to[out=90,in=180] (\widthscale,2);
        \draw (\widthscale,3) to[out=180,in=-90] (\widthscale-1,3.5) to[out=90,in=180] (\widthscale,4);
        \end{tikzpicture}
     \
     $=$
     \
        \begin{tikzpicture}[scale=0.4, baseline=(base)]
        \def\widthscale{3.5};

        \coordinate (base) at (0,2);
        \draw[gray,line width = 1](0,0.5)--(0,4.5);
        \draw[gray,line width = 1](2*\widthscale,0.5)--(2*\widthscale,4.5);

        \foreach \y in {1,2, 3,4}{
            \foreach \x in {0,2}{
                \draw[fill=black]  (\widthscale*\x,\y) circle [radius=0.15];
                \draw (\widthscale*\x,\y) node[right] {};
            }
            
        }

        \draw (0,3) to[out=0,in=-90] (1,3.5) to[out=90,in=0] (0,4);
        
        \draw (0,1) to[out=0,in=180] (\widthscale,1);
        \draw (0,2) to[out=0,in=180] (\widthscale,4);

        \draw (\widthscale,1) to[out=0,in=-90] (\widthscale+2,2.5) to[out=90,in=0] (\widthscale,4);
        
        \draw (2*\widthscale,1) to[out=180,in=-90] (2*\widthscale-1,1.5) to[out=90,in=180] (2*\widthscale,2);
        \draw (2*\widthscale,3) to[out=180,in=-90] (2*\widthscale-1,3.5) to[out=90,in=180] (2*\widthscale,4);

        \draw (\widthscale,2.5) circle (0.7);
        \end{tikzpicture}
     \
     $= $
     \ 
     $a \cdot$
        \begin{tikzpicture}[scale=0.4, baseline=(base)]
        \def\widthscale{3.5};

        \coordinate (base) at (0,2);
        \draw[gray,line width = 1](0,0.5)--(0,4.5);
        \draw[gray,line width = 1](\widthscale,0.5)--(\widthscale,4.5);

        \foreach \y in {1,2, 3,4}{
            \foreach \x in {0,1}{
                \draw[fill=black]  (\widthscale*\x,\y) circle [radius=0.15];
                \draw (\widthscale*\x,\y) node[right] {};
            }
            
        }

        \draw (0,3) to[out=0,in=-90] (1,3.5) to[out=90,in=0] (0,4);
        \draw (0,1) to[out=0,in=-90] (1,1.5) to[out=90,in=0] (0,2);
        
        \draw (\widthscale,1) to[out=180,in=-90] (\widthscale-1,1.5) to[out=90,in=180] (\widthscale,2);
        \draw (\widthscale,3) to[out=180,in=-90] (\widthscale-1,3.5) to[out=90,in=180] (\widthscale,4);

        \end{tikzpicture}

    \caption{Multiplication in the Temperley--Lieb algebra $\TL_4$.}\label{fig: TLalgsInIntro}
\end{figure}
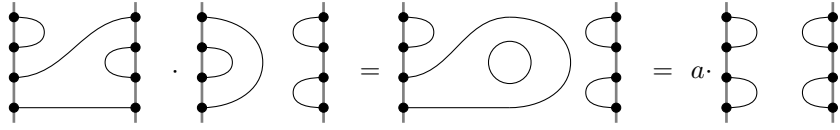

These algebras, and some of their modules, already implicitly appeared in our description of the complex of planar loops: we will define them formally in \cref{subsection: TLcat} as endomorphism objects in the Temperley--Lieb category $\TL$. Our formal definition of the differential graded algebra of planar loops $\CPL(2n)$ uses $\TL_{2n}$ explicitly.

The relationship between $\Tor_*^{\TL_{2n}}(R,R)$ and the homology of $\CPL(2n)$ is as follows.

\begin{theorem} \label{theorem: shift}
There are $R$-module isomorphisms
$$\Tor_i^{\TL_{2n}(R,a)}(R,R) \cong \begin{cases}
R & i=0\\
0 & 0 < i < 2n-1\\
H_{i-2n+1}(\CPL(2n;R,a)) &  2n-1 \leq i.
\end{cases}$$
\end{theorem}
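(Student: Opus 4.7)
The plan is to interpret both sides of the isomorphism as computing a derived tensor product, and then show that these two derived tensor products differ by exactly the degree shift $2n - 1$ via an explicit projective resolution of the trivial $\TL_{2n}(R,a)$-module $R$.

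First, I would unpack \cref{definition: dga of planar loops}: the chain complex underlying $\CPL(2n)$ should be identifiable as a (two-sided) bar construction of the form
\[ \CPL(2n)_p \cong V \otimes_R \TL_{2n}^{\otimes (p-1)} \otimes_R W \quad (p \geq 1), \]
where $V$ and $W$ are the free $R$-modules spanned by the cup matchings on $2n$ nodes, corresponding to the two outer half-plane regions of a basis diagram of $\CPL(2n)_p$ (see \cref{fig:whatIsB}), viewed as a left (respectively right) $\TL_{2n}$-module. This bar interpretation implies that $H_*(\CPL(2n))$ computes the derived tensor product $V \otimes^{\mathbb{L}}_{\TL_{2n}(R,a)} W$, up to the degree shift intrinsic to the indexing of a two-sided bar construction.

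Second, the crux is to relate $V$ and $W$ to the trivial module $R$ in the derived category of $\TL_{2n}$-modules. The plan is to exhibit an explicit projective resolution of $R$ whose top layer is $V$ (respectively $W$), of total length equal to $2n-1$. A natural candidate uses the cell structure of $\TL_{2n}$ and is constructed from an \emph{innermost cup complex}, of the sort hinted at by the paper's preliminary macros: a semi-simplicial object whose simplices are ordered chains of innermost cups and whose face maps remove cups. The key combinatorial claim is that, after augmentation, this complex is acyclic. Such acyclicity simultaneously yields the vanishing $\Tor_i^{\TL_{2n}(R,a)}(R,R) = 0$ in the range $0 < i < 2n - 1$ and produces the degree shift by $2n - 1$ in the remaining degrees.

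Third, feeding this resolution into the bar construction computing $V \otimes^{\mathbb{L}}_{\TL_{2n}} W$ gives a bicomplex whose two spectral sequences collapse to identify the total homology with $\Tor^{\TL_{2n}(R,a)}_{*+2n-1}(R,R)$ on one side and $H_*(\CPL(2n;R,a))$ on the other, yielding the isomorphism.

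The main obstacle is the combinatorial argument establishing acyclicity of the cup resolution. This is expected to mirror the strategy used by Sroka in the odd-strand case, where the analogous complex was fully contractible and so produced the trivial answer, but the even case is more delicate: one expects contractibility only in the lower $2n - 1$ degrees, with the remaining structure feeding nontrivially into the homology of $\CPL(2n)$. The natural tools are discrete Morse theory on the poset of cup configurations, coupled with a Mayer--Vietoris decomposition of the complex (as suggested by the presence of a macro for Mayer--Vietoris in the paper's setup); verifying the acyclicity precisely in the claimed range, and tracking the resulting spectral sequence compatibility with the dga structure, is expected to be the technical heart of the argument.
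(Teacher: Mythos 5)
Your overall strategy is in the same spirit as the paper's: $\CPL(2n)$ is, by \cref{definition: dga of planar loops}, the cone on the augmented two-sided bar construction $\Bar(\TL(0,2n),\TL_{2n},\TL(2n,0))$, and the theorem is obtained by trading the copies of the trivial module in $\Tor_*^{\TL_{2n}}(R,R)$ for $\TL(0,2n)$ and $\TL(2n,0)$ via an acyclic complex of innermost cups, whose acyclicity is indeed a Mayer--Vietoris argument. But your bookkeeping of the shift contains a genuine error. The relevant resolution, the augmented complex of innermost cups $\AugmentedInnermostCupComplex_*(2n,2n)$, exhibits $R=\cell(2n,2n)$ as quasi-isomorphic to a complex $\InnermostCupComplex_0(2n,2n)\leftarrow\cdots\leftarrow\InnermostCupComplex_n(2n,2n)$ of length $n$, not $2n-1$, whose terms are projective for $q<n$ (\cref{cor:InnIsProj}) and whose top term is $\InnermostCupComplex_n(2n,2n)\cong\TL(2n,0)$. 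The shift of $2n-1$ is assembled from \emph{two} applications of this resolution --- one for each tensor factor, the second after applying the reflection anti-involution $\TL(0,2n)\cong\TL(2n,0)$ --- contributing $n+n$, minus the degree-one suspension already built into $\CPL(2n)$ as a cone on the bar construction; in the paper this is split as \cite[Theorem B]{Sroka} (shift by $n$, reducing $\Tor_i(R,R)$ to $\Tor_{i-n}(R,\cell(2n,0))$ and giving the vanishing for $0<i<n$) followed by \cref{cor: second shift on Tor} (shift by $n-1$, via \cref{prop: suspension of dga is derived cell}). A single resolution of length $2n-1$ as you propose would identify $\Tor_i(R,R)$ with $\Tor_{i-2n+1}(R,\TL(2n,0))$, which is not yet $H_{i-2n+1}(\CPL(2n))$ (the latter is, away from low degrees, $\Tor_{i-2n}(\TL(0,2n),\TL(2n,0))$), so the second resolution step cannot be avoided.

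A smaller misdiagnosis: you expect the cup complex to be contractible ``only in the lower $2n-1$ degrees'' in the even case. In fact $\AugmentedInnermostCupComplex_*(2n,2i)$ is fully acyclic in all cases (\cref{proposition:complexes-of-innermost-cups-are-acyclic}; a direct Mayer--Vietoris argument, no discrete Morse theory needed). The even/odd dichotomy lies elsewhere: the top term $\TL(2n,0)$ of the resolution is \emph{not} a projective $\TL_{2n}$-module, whereas its analogue $\TL(2n+1,1)$ in the odd case is (\cref{lemma:left-projective-TL-modules} requires $i>0$), and this non-projectivity is the sole source of the higher homology. Finally, the low-degree edge cases $i=2n-1,2n$, where the cone on the augmentation $\Bar\to\TL(0,0)$ and the non-surjectivity of $\TL(0,2n)\otimes_{\TL_{2n}}\TL(2n,0)\to R$ enter, need more care than ``up to the degree shift intrinsic to the indexing''.
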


Combined with \cref{cor:MainCalculations} we obtain the following example calculations:
\begin{enumerate}
\item For $i>0$, $\Tor_i^{\TL_{2n}(R,a)}(R,R)$ is $a$-torsion.
\item[(b)+(c)] If $n! \in R^\times$ then
$$\Tor_i^{\TL_{2n}(R,0)}(R,R) \cong \begin{cases}
R & i\geq 0, i \equiv -1,0 \mod 2n\\
0 & \text{else}.
\end{cases}$$
\setcounter{enumi}{3}
\item If $a \in R$ is not a zerodivisor and $n! \in (R/a)^\times$ then
$$\Tor_i^{\TL_{2n}(R,a)}(R,R) \cong \begin{cases}
R & i=0\\
R/a & i\geq 0, i \equiv -1 \mod 2n\\
0 & \text{else}.
\end{cases}$$
\item $\Tor_*^{\TL_{4}(\mathbb{Z},0)}(\mathbb{Z},\mathbb{Z})$ only has 2-torsion, which is all simple. The Poincar{\'e} series for its ranks is $\tfrac{1+ t^3}{1-t^4}$. The Poincar{\'e} series for the $\mathbb{Z}/2$-dimension of the simple 2-torsion is
$$\frac{t^5}{(1-t - t^3 ) (1 - t^4)} = t^5 + t^6 + t^7 + 2 t^8 + 4 t^9 + 5 t^{10} + \cdots.$$
\end{enumerate}

To prove \cref{thm:Main} we build a resolution of the $L(2n;R,0)$-module $R$ satisfying strong algebraic properties stated in our main technical theorem, \cref{theorem:derived-outermost-cup-complex-resolution}. This theorem relies on two auxiliary complexes, which we show are acyclic: the complex of innermost cups (and its subcomplex of submaximal cups), and the complex of outermost cups. We build derived copies, which allow us to prove \cref{theorem:derived-outermost-cup-complex-resolution} (and, along the way, \cref{theorem: shift}). These constructions are deferred to the second half of the paper.

\subsection{Connection to previous works}

Temperley--Lieb algebras $\TL_n(R,a)$ were introduced in mathematical physics \cite{TemperleyLieb} and later rediscovered by Jones, who used them to define the Jones polynomial \cite{Jones}. The diagrammatic interpretation used throughout this work is due to Kauffman \cite{KauffmanState, KauffmanInvariant}. Temperley--Lieb algebras are also interesting in representation theory, for instance as examples of cellular algebras studied by Graham--Lehrer and Ridout--Saint-Aubin \cite{GrahamLehrer, RidoutStAubin}. While we do not explicitly use the theory of cellular algebras, we note that the complex of outermost cups is a simplicial object constructed from the cell modules associated to a Temperley--Lieb algebra.

As mentioned above, our calculations contrast sharply with the situation when the number of strands is odd: here the last-named author showed that the homology vanishes \cite{Sroka}.

The study of the homology of Temperley--Lieb algebras, $\Tor_*^{\TL_{n}(R,a)}(R,R)$, was initiated in joint work of the first-named author and Hepworth \cite{BoydHepworthStability}. The main results of \cite{BoydHepworthStability} include vanishing of the homology in all positive degrees if the parameter $a$ is a unit in $R$ \cite[Theorem A]{BoydHepworthStability}, as well as a slope-1 vanishing line without restriction on the parameter \cite[Theorem B]{BoydHepworthStability} (which used an observation of the third-named author \cite{RandalWilliams}). Combining our \cref{theorem: shift} with Part (a) of \cref{cor:MainCalculations}, we recover these statements for Temperley--Lieb algebras on an even number of strands. Using the model (\cref{thm:Main}) one can then identify the first two nontrivial homology groups, extending \cite[Theorem C]{BoydHepworthStability}: $\Tor_{2n-1}^{\TL_{2n}(R,a)}(R,R) = R/a$ and $\Tor_{2n}^{\TL_{2n}(R,a)}(R,R) = R_a = \ker(R \xrightarrow{a} R)$.

The initial investigations by the first-named author and Hepworth, \cite{HepworthIH} and \cite{BoydHepworthStability}, led to research exploring the homology of various other augmented algebras: In joint work with Patzt they also investigated Brauer and Partition algebras \cite{BoydHepworthPatztBrauer, BoydHepworthPatztPartition}; Hepworth's result on Iwahori--Hecke algebras \cite{HepworthIH} were extended in the PhD thesis of the last-named author \cite{SrokaThesis} and in work of Moselle \cite{Moselle}; the second-named author investigated Jones annular, rook-Brauer and extended the results for partition algebras \cite{BoydeOne, BoydeTwo}; more recently, Graves explored generalized rook-Brauer algebras \cite{GravesOne}; Fisher--Graves investigated blob algebras, walled Brauer algebras and Tanabe algebras \cite{FisherGravesOne, FisherGravesTwo}; and Cranch--Graves studied coloured partition algebras \cite{CranchGravesOne}.

We expect that many of the new ideas contained in this work have analogues in these contexts.

\subsection{Remaining questions} Many questions about $\CPL(2n)$ and $\TL_{2n}$ remain open. Here we suggest three. First, the dga $\CPL(2n)$ admits two involutions given by reflecting diagrams left-to-right and top-to-bottom. The effect of these involutions on the model is not obvious from our methods.

\begin{question} \label{question: involution}  Describe the effect of these involutions on the model.
\end{question}

Caution: a priori, the endomorphisms of the model corresponding to these involutions need only be involutions up to (coherent) chain homotopy.

\begin{question} \label{question: diagrammatics} Interpret the generators $x_i$ of the model diagrammatically.
\end{question}

In forthcoming work of the second-named author, these first two questions will be answered in the special case $2n=4$, $a=0$, by direct calculation.

Lastly, from our results, using a universal coefficient spectral sequence, one can sometimes calculate $\Ext^*_{TL_{2n}}(R,R)$ as an $R$-module. The composition product is however not visible by our methods.

\begin{question} \label{question: Ext} Determine $\Ext^*_{TL_{2n}}(R,R)$ as an $R$-algebra.
\end{question}

For example, when $a \in R$ is not a zerodivisor and $n! \in (R/a)^\times$, the universal coefficient spectral sequence shows that
$$\Ext^i_{TL_{2n}}(R,R) \cong \begin{cases}
    R & i=0\\
    R/a &i>0, i \equiv 0 \mod 2n\\
    0 & \text{else}.
\end{cases}$$
Is this $R[x]/(ax)$ as an $R$-algebra?

\subsection{Overview}

In \cref{section: TL category and algebra of planar loops}, we introduce the Temperley--Lieb category, give a second definition of the dga of planar loops $\CPL(2n)$ and observe some extra structure on it. In \cref{section: small models},  we prove \cref{thm:Main}, which provides a small model for the dga $\CPL(2n)$ assuming our main technical result (\cref{theorem:derived-outermost-cup-complex-resolution}). In the subsequent \cref{sec:Calc}, we show that this readily leads to homology calculations, establishing in particular \cref{cor:MainCalculations}. The remaining sections are then devoted to the proof of \cref{theorem:derived-outermost-cup-complex-resolution} as well as to explaining the relation between $\CPL(2n)$ and the homology of Temperley--Lieb algebras in \cref{theorem: shift}.

\subsection{Acknowledgments}
GB would like to thank Richard Hepworth for several insightful conversations, and especially for suggesting that one could try to push calculations of Temperley--Lieb homology further by thinking about the diagrammatics of bar constructions.

RB was supported by EPSRC Fellowship EP/V043323/1 and EP/V043323/2. 

GB was supported by the ERC (grant no.~950048), and by the INI, Cambridge, during the programme ``Equivariant homotopy theory in context'' (EPSRC grant EP/Z000580/1) where work on this paper was undertaken.

ORW was partially supported by the ERC under the European Union’s Horizon 2020 research and innovation programme (grant agreement No.~756444) and by the Danish National Research Foundation through the Copenhagen Centre for Geometry and Topology (DNRF151).

RJS was supported by NSERC Discovery Grant A4000 as well as by the German Research Foundation through SFB 1442 -- 427320536, Geometry: Deformations and Rigidity, and under Germany's Excellence Strategy EXC 2044 -- 390685587, Mathematics M\"unster: Dynamics--Geometry--Structure.

Finally, we would like to thank the universities of Cambridge, Glasgow, Münster and Utrecht for support and hospitality during research visits.

\section{The Temperley--Lieb category and the dga of planar loops}
\label{section: TL category and algebra of planar loops}

In this section, we introduce the Temperley--Lieb category, give a second formal definition of the dga of planar loops $\CPL(2n)=\CPL(2n;R,a)$,  record some extra structure on it, and state our main technical result, \cref{theorem:derived-outermost-cup-complex-resolution}.

\subsection{The Temperley--Lieb category and Temperley--Lieb algebras}
\label{subsection: TLcat}

Let $m$ and $n$ be two non-negative integers. A \emph{Temperley--Lieb} $(m,n)$\emph{-diagram} is a \emph{planar diagram} consisting of two vertical bars with $m$ nodes on the left bar and $n$ nodes on the right bar, such that $m+n$ is even. Nodes are labelled from bottom to top. There are $(m+n)/2$ arcs connecting the $m+n$ nodes in pairs, which do not intersect, and which lie between the left and right bars. Moreover, the arcs are nonoriented and considered up to isotopy (relative to the end points). We refer the reader to \cite[Section 5.7.4]{KasselTuraev} for additional details. An example is shown in \cref{fig: TL53}.  We denote by $\TL(m,n)$ the free $R$-module on all Temperley--Lieb $(m,n)$-diagrams. Since we will later specify to $m$ and $n$ both being even, all of our figures are in this case.

\begin{figure}[ht]
    \begin{tikzpicture}[scale=0.4, baseline=(base)]
        \coordinate (base) at (0,3.25);
        \draw[gray,line width = 1](0,0.5)--(0,6.5);
        \draw[gray,line width = 1](6,0.5)--(6,6.5);
        \foreach \x in {1,2, 3,4,5,6}{
            \draw[fill=black] (0,\x) circle [radius=0.15] ;
            \draw (0,\x) node[left] {$\scriptstyle{\x}$};
        } 
        \foreach \y in {1,2, 3,4}{
            \draw[fill=black]  (6,\y+1) circle [radius=0.15];
            \draw (6,\y+1) node[right] {$\scriptstyle{\y}$};
        } 
        \draw (0,1) to[out=0,in=-90] (1,1.5) to[out=90,in=0] (0,2);
        \draw (0,4) to[out=0,in=-90] (1,4.5) to[out=90,in=0] (0,5);
        
        \draw (6,3) to[out=180,in=-90] (5,3.5) to[out=90,in=180] (6,4);
        \draw (0,3) .. controls (2,3) and (4,1.5) .. (6,2);
        \draw (0,6) .. controls (2,6) and (4,4.5) .. (6,5);
    \end{tikzpicture}
    \caption{A planar diagram in $\TL(6,4)$. In future diagrams we will sometimes suppress the node labels.}\label{fig: TL53}
    \end{figure}
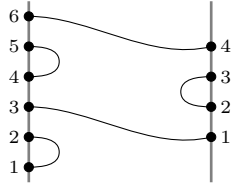

If $m=n=0$, then there is a unique Temperley--Lieb $(0,0)$-diagram, the empty diagram. It is denoted $\emptyset$, so $\TL(0,0)=R\{\emptyset\}$.

\begin{definition}\label{def: TL category}
    Let $R$ be unital commutative ring and let $a \in R$ be a parameter. The \emph{Temperley--Lieb category} $\TL=\TL(R,a)$ is the $R$-linear category with one object $n$ for every number $n \geq 0$. The morphism $R$-modules are given by $\TL(m,n)$ (note these are zero when $m+n$ is odd). Given a $(l,m)$-diagram $D$ and $(m,n)$-diagram $D'$, the composition law is
    \begin{eqnarray*}
        \TL(l, m) \otimes_R \TL(m,n) &\to& \TL(l,n)\\
        D \otimes D' &\mapsto& a^k [D \circ D'] 
    \end{eqnarray*} 
    where $[D \circ D']$ is the $(l,n)$-diagram obtained by identifying the right hand bar and nodes of $D$ with the left hand bar and nodes of $D'$, and joining the arcs together. Any closed loops created in this process are replaced by a factor of $a$, so in the equation above $k$ is the number of closed loops created; see \cref{fig:composition-in-tl}.
\end{definition}
\begin{figure}

\begin{tikzpicture}[scale=0.4, baseline=(base)]
        \coordinate (base) at (0,3.25);
        \draw[gray,line width = 1](0,0.5)--(0,6.5);
        \draw[gray,line width = 1](6,0.5)--(6,6.5);
        \foreach \x in {1,2, 3,4,5,6}{
            \draw[fill=black]  (6,\x) circle [radius=0.15] ;
        } 
        \draw[fill=black] (0,3) circle [radius=0.15] ;
        \draw[fill=black] (0,4) circle [radius=0.15] ;
    
        \draw (6,4) to[out=180,in=-90] (5,4.5) to[out=90,in=180] (6,5);
        \draw (6,2) to[out=180,in=-90] (5,2.5) to[out=90,in=180] (6,3);
        \draw (0,3) .. controls (2,3) and (4,1) .. (6,1);
         \draw (0,4) .. controls (2,4) and (4,6) .. (6,6);
    \end{tikzpicture}
   \
    $\cdot$
    \
    \begin{tikzpicture}[scale=0.4, baseline=(base)]
        \coordinate (base) at (0,3.25);
        \draw[gray,line width = 1](6,0.5)--(6,6.5);
        \draw[gray,line width = 1](12,0.5)--(12,6.5);

        \foreach \x in {1,2, 3,4,5,6}{
            \draw[fill=black]  (6,\x) circle [radius=0.15] ;
        } 
        
        \foreach \x in {1,2,3,4}{
            \draw[fill=black]  (12,\x+1) circle [radius=0.15];
        } 
    
        \draw (6,2) to[out=0,in=-90] (7,2.5) to[out=90,in=0] (6,3);
       
        \draw (12,3) to[out=180,in=-90] (11,3.5) to[out=90,in=180] (12,4);
        \draw (6,1) to[out=0,in=-90] (8,2.5) to[out=90,in=0] (6,4);
        \draw (6,5) .. controls (9,5) and (9,2) .. (12,2);
        \draw (6,6) .. controls (9,6) and (9,5) .. (12,5);
    \end{tikzpicture}
    \ \
    $=$
    \ \
    \begin{tikzpicture}[scale=0.4, baseline=(base)]
        \coordinate (base) at (0,3.25);
        \draw[gray,line width = 1](0,1.5)--(0,5.5);
        \draw[gray,line width = 1](6,1.5)--(6,5.5);
         \draw[fill=black] (0,3) circle [radius=0.15];
        \draw[fill=black] (0,4) circle [radius=0.15] ;
        \foreach \x in {1,2, 3,4}{
            \draw[fill=black] (6,\x+1) circle [radius=0.15];
        } 
        \draw (0,3) .. controls (3,3.) and (3,1.5) .. (6,2);
        \draw (0,4) .. controls (3,4) and (3,5.5) .. (6,5);
        \draw (6,3) to[out=180,in=-90] (5,3.5) to[out=90,in=180] (6,4);
        \draw (3,3.5) circle (0.7);
    \end{tikzpicture}
    \ \ 
    $=$
    \ \ 
    $a\cdot$
    \begin{tikzpicture}[scale=0.4, baseline=(base)]
    \def\widthscale{5}
        \coordinate (base) at (0,3.25);
        \draw[gray,line width = 1](0,1.5)--(0,5.5);
        \draw[gray,line width = 1](\widthscale,1.5)--(\widthscale,5.5);
            \draw[fill=black] (0,3) circle [radius=0.15];
            \draw[fill=black] (0,4) circle [radius=0.15] ;
        \foreach \x in {1,2, 3,4}{
            \draw[fill=black] [radius=0.15] (\widthscale,\x+1) circle [radius=0.15];
        } 
        \draw (0,3) to[out=0,in=180] (\widthscale,2);
        \draw (0,4) to[out=0,in=180] (\widthscale,5);
        \draw (\widthscale,3) to[out=180,in=-90] (\widthscale-1,3.5) to[out=90,in=180] (\widthscale,4);
    \end{tikzpicture}
    \caption{Composition of morphisms $\TL(2, 6) \otimes_R \TL(6,4) \to \TL(2,4)$.}
    \label{fig:composition-in-tl}
\end{figure}
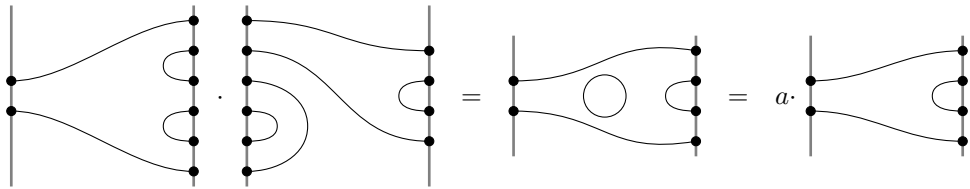

The composition law equips the endomorphism modules $\TL(n,n)$ with the structure of an $R$-algebra. These are the Temperley--Lieb algebras that we are interested in.

\begin{definition}
    The \emph{Temperley--Lieb algebra on $n$ strands with parameter $a \in R$} is $
    \TL_{n}(R,a)=\TL_n = \TL(n,n).
    $
\end{definition}

Similarly, the composition law in $\TL$ equips every morphism module $\TL(m,n)$ with the structure of a $(\TL_{m}, \TL_{n})$-bimodule. These modules will form the basic buildings blocks of our analysis of the family of Temperley--Lieb algebras on an even number of strands, $\{\TL_{2n}\}_{n \in \mathbb{N}}$.

We now introduce some named diagrams which we will frequently use throughout this paper. By a \emph{left (right) cup} in a diagram we mean a connection between two nodes on the left (respectively right).
\begin{definition}\label{defn: diagrams Lk Rk and Lmax}
For $1 \leq k \leq 2i-1$ let $L_k \in \TL(2i,2i-2)$ be the diagram which has a single left cup connecting the nodes $k$ and $k+1$ (see \cref{fig: R2}).
\begin{figure}[ht]
$L_2=$ 
    \begin{tikzpicture}[scale=0.4, baseline=(base)]
        \coordinate (base) at (0,3.25);
        \draw[gray,line width = 1](0,0.5)--(0,6.5);
        \draw[gray,line width = 1](6,0.5)--(6,6.5);
        \foreach \x in {1,2, 3,4,5,6}{
            \draw[fill=black] (0,\x) circle [radius=0.15] ;
            \draw (0,\x) node[left] {$\scriptstyle{\x}$};
        } 
        \foreach \y in {1,2, 3,4}{
            \draw[fill=black]  (6,\y+1) circle [radius=0.15];
            \draw (6,\y+1) node[right] {$\scriptstyle{\y}$};
        } 
        \draw (0,2) to[out=0,in=-90] (1,2.5) to[out=90,in=0] (0,3);
        \draw (6,2) to[out=180,in=0] (0,1);
         \foreach \x in {3,4,5}{
        \draw (6,\x) to[out=180,in=0]  (0,\x +1);
        }
    \end{tikzpicture}
$,$
\quad
$\Lmax =$
    \begin{tikzpicture}[scale=0.4, baseline=(base)]
        \coordinate (base) at (0,3.25);
        \draw[gray,line width = 1](0,0.5)--(0,6.5);
        
        \foreach \x in {1,2, 3,4,5,6}{
            \draw[fill=black] (0,\x) circle [radius=0.15] ;
            \draw (0,\x) node[left] {$\scriptstyle{\x}$};
        } 
        \draw (0,1) to[out=0,in=-90] (1,1.5) to[out=90,in=0] (0,2);
        \draw (0,3) to[out=0,in=-90] (1,3.5) to[out=90,in=0] (0,4);
        \draw (0,5) to[out=0,in=-90] (1,5.5) to[out=90,in=0] (0,6);
    \end{tikzpicture}
    \caption{The diagrams $L_2$ in $\TL(6,4)$ and $\Lmax \in \TL(6,0)$.}\label{fig: R2}
\end{figure}
Let $R_k$ be the `mirror image' of $L_k$: the diagram in $\TL(2i-2,2i)$ having a single right cup connecting $k$ and $k+1$. Let $\Lmax \in \TL(2i,0)$ be the diagram with a connection from $2k+1$ to $2k+2$ for $0 \leq k < i$.    
\end{definition}

\subsection{The algebra of planar loops}

In this section, we give a second, more formal, definition of the differential graded algebra of planar loops $\CPL(2n, R, a)$, which we described pictorially in the introduction, in terms of an augmented two-sided bar construction arising from the Temperley--Lieb category $\TL$.

\begin{definition} \label{definition: dga of planar loops} Let $n \geq 0$. 
The \emph{dga of planar loops} $\CPL(2n) = \CPL(2n; R, a)$ is defined as follows. If $n = 0$ we set $\CPL(2n) = \TL(0,0) = R\{\emptyset\}$, concentrated in degree 0. For $n > 0$, we define $\CPL(2n)$ as the cone of the map $$\Bar(\TL(0,2n), \TL_{2n}, \TL(2n,0))  \to \TL(0,2n) \otimes_{\TL_{2n}} \TL(2n,0) \to \TL(0,0)$$ which first uses the canonical map from the bar construction to its zeroth homology and then the composition law in $\TL$. 
Explicitly, in degree $q > 0$ the underlying $R$-module is
$$\CPL(2n)_q = \TL(0,2n) \otimes_R \TL(2n,2n)^{\otimes q-1} \otimes_R \TL(2n,0)$$
and in degree $q = 0$ it is given by $\CPL(2n)_0 = \TL(0, 0)$.
The differential of $\CPL(2n)$ is induced by the composition maps in $\TL$. Writing bars to represent the tensor products $\otimes_R$, it is given by an alternating sum of face maps
$$d_q(D_0 | \dots | D_{q}) = \sum_{i=0}^{q-1} (-1)^{i} (D_0 | \dots | D_i D_{i+1} | \dots | D_{q}).$$
This is a differential graded algebra, with multiplication induced by the composition map $\TL(2n,0) \otimes_R \TL(0,2n) \to \TL_{2n}$, and unit $\emptyset \in \TL(0,0)=L(2n)_0$; see \cref{fig:whatIsB}.
\end{definition}

In each degree, $\CPL(2n)$ comes with a canonical basis, given by the tensors, where each tensor factor is taken to have its standard basis of diagrams. In degree $q$, such a tensor is precisely a system of planar loops of height $2n$ pinned by $q$ bars, as defined in the introduction. 

The second part of the construction produces dg-modules over this dga.

\begin{definition}\label{definition: dg modules general}
Suppose we are given the data $(X,\phi, \psi)$ of a left $\TL_{2n}$-module $X$, an $R$-module map $\phi \colon \TL(0,2n) \otimes_{\TL_{2n}} X \to Y$, and a $\TL_{2n}$-module map $\psi \colon \TL(2n,0) \otimes_R Y \to X$ such that the diagrams
    \begin{equation*}
\begin{tikzcd}
\TL(0,2n) \otimes_R \TL(2n,0) \otimes_R Y \dar{\TL(0,2n) \otimes \psi} \arrow[r] & R \otimes_R Y \arrow[d, equals] \\
\TL(0,2n) \otimes_R X \rar{\phi}  & Y
\end{tikzcd}
\end{equation*}
and
    \begin{equation*}
\begin{tikzcd}
\TL(2n,0) \otimes_R \TL(0,2n) \otimes_{\TL_{2n}} X \arrow[r] \dar{\TL(2n,0) \otimes \phi} & \TL_{2n} \otimes_{\TL_{2n}} X \arrow[d, equals]\\
\TL(2n,0) \otimes_R Y \rar{\psi} & X
\end{tikzcd}
\end{equation*}
commute, where unlabelled arrows are induced by composition in $\TL$.

We may associate to this the chain complex of $R$-modules
$${D}(X, \phi, \psi) := \mathrm{Cone}(\Bar(\TL(0,2n), \TL_{2n}, X) \to \TL(0,2n) \otimes_{\TL_{2n}} X \overset{\phi}\to Y),$$
where the first map is the canonical map from the bar construction to its zeroth homology. Explicitly, in degree $q>0$ the underlying $R$-module is
$${D}(X, \phi, \psi)_q = \TL(0,2n) \otimes_R \TL_{2n}^{\otimes q-1} \otimes_R X$$
and in degree $q=0$ it is given by ${D}(X, \phi, \psi)_0 = Y$. The differential is induced by multiplication in $\TL_{2n}$, its right and left actions on $\TL(0,2n)$ and $X$, and $\phi$.

The chain complex ${D}(X, \phi, \psi)$ has the canonical structure of a left $L(2n)$-module: the $\CPL(2n)$ action on positive-degree classes is induced by the multiplication map $\TL(2n,0) \otimes_R \TL(0,2n) \to \TL_{2n}$ in \cref{definition: dga of planar loops}, and its action on degree zero classes by $\psi$. This construction defines a functor from the category of such triples $(X,\phi,\psi)$ to the category of left $L(2n)$-modules.
\end{definition}

An example of this construction is the following.

\begin{definition} \label{def: dg modules L02n2i} Let $n,i \geq 0$. The module $\CPL(0,2n,2i)=\CPL(0,2n,2i;R,a)$ is the left $\CPL(2n)=\CPL(0,2n,0)$-module obtained from the construction of \cref{definition: dg modules general} with $X = \TL(2n,2i)$, $Y = \TL(0,2i)$, and the maps $$\phi \colon \TL(0,2n) \otimes_{\TL_{2n}} \TL(2n,2i) \to \TL(0,2i)$$
and
$$\psi \colon \TL(2n,0) \otimes_{R} \TL(0,2i) \to \TL(2n,2i)$$
given by composition in $\TL$. Commutativity of the required diagrams follows from associativity of composition in $\TL$.
\end{definition}

Note that $\CPL(0,2n,2i)$ has a canonical $R$-basis similar to that given for $\CPL(2n)$.

\subsection{The element $\Phi$}\label{sec:Phi}

In this section we introduce the element $\Phi \in \CPL(2n)_1$. 

    \begin{figure}[ht]
    $\Phi_l =$
        \begin{tikzpicture}[scale=0.4, baseline=(base)]
        \coordinate (base) at (0,3.75);
        
        \draw[gray,line width = 1](6,0.5)--(6,8.5);

        \foreach \y in {1,2, 3,4,5,6,7,8}{
            \draw[fill=black]  (6,\y) circle [radius=0.15];
            \draw (6,\y) node[right] {$\scriptstyle{\y}$};
        }

                 \foreach \x in {1,5}{
        \draw (6,\x) to[out=180,in=-90] (4,\x + 1.5) to[out=90,in=180] (6,\x + 3);
        \draw (6,\x+1) to[out=180,in=-90] (5,\x + 1.5) to[out=90,in=180] (6,\x + 2);
        }

    \end{tikzpicture}
    ,
    \begin{tikzpicture}[scale=0.4, baseline=(base)]
        \coordinate (base) at (0,3.75);
        \draw[gray,line width = 1](0,0.5)--(0,8.5);
        \draw[gray,line width = 1](6,0.5)--(6,8.5);

        \foreach \x in {1,2, 3,4,5,6,7,8}{
            \draw[fill=black]  (0,\x) circle [radius=0.15];
            \draw (0,\x) node[left] {$\scriptstyle{\x}$};
        } 

        \foreach \x in {1,2}{
            \draw[fill=black]  (6,\x+3) circle [radius=0.15];
            \draw (6,\x+3) node[right] {$\scriptstyle{\x}$};
        } 

        \foreach \x in {1,2}{
        \draw (0,\x+6) to[out=0,in=180] (6,\x + 3);
        }

         \foreach \x in {1,4}{
        \draw (0,\x) to[out=0,in=-90] (1,\x + 0.5) to[out=90,in=0] (0,\x + 1);
        }
        
        \draw (0,3) to[out=0,in=-90] (2,3 + 1.5) to[out=90,in=0] (0,3 + 3);
    \end{tikzpicture}
    $= \Phi_r'$
    ,
    \quad
    \quad
        $\Phi_l =$
        \begin{tikzpicture}[scale=0.4, baseline=(base)]
        \coordinate (base) at (0,4.75);
        
        \draw[gray,line width = 1](6,0.5)--(6,10.5);

        \foreach \y in {1,2, 3,4,5,6,7,8,9,10}{
            \draw[fill=black]  (6,\y) circle [radius=0.15];
            \draw (6,\y) node[right] {$\scriptstyle{\y}$};
        } 
        
         \foreach \x in {1,4, 8}{
        \draw (6,\x) to[out=180,in=-90] (5,\x + 0.5) to[out=90,in=180] (6,\x + 1);
        }
         \foreach \x in {3,7}{
        \draw (6,\x) to[out=180,in=-90] (4,\x  + 1.5) to[out=90,in=180] (6,\x + 3);
        }
    \end{tikzpicture}
    ,
    \begin{tikzpicture}[scale=0.4, baseline=(base)]
        \coordinate (base) at (0,4.75);
        \draw[gray,line width = 1](0,0.5)--(0,10.5);

        \foreach \x in {1,2, 3,4,5,6,7,8, 9, 10}{
            \draw[fill=black]  (0,\x) circle [radius=0.15];
            \draw (0,\x) node[left] {$\scriptstyle{\x}$};
        }

         \foreach \x in {2,6}{
        \draw (0,\x) to[out=0,in=-90] (1,\x + 0.5) to[out=90,in=0] (0,\x + 1);
        }

        \draw[dotted, thick] (0,9) to[out=0,in=-90] (1,9 + 0.5) to[out=90,in=0] (0,9 + 1);
        
         \foreach \x in {1,5}{
        \draw (0,\x) to[out=0,in=-90] (2,\x  + 1.5) to[out=90,in=0] (0,\x + 3);
        }
    \end{tikzpicture}
    $= \Phi_r$
    \caption{The diagrams $\Phi_l$ and $\Phi_r'$ for $2n=8$ (left) and $\Phi_l$ and $\Phi_r$ for $2n=10$ (right). In the latter case, the cup formed by the multiplication $\Phi_r = \Phi_r' \cdot L_1$ is dotted for emphasis.}\label{fig: bubbleWriting1}
\end{figure}
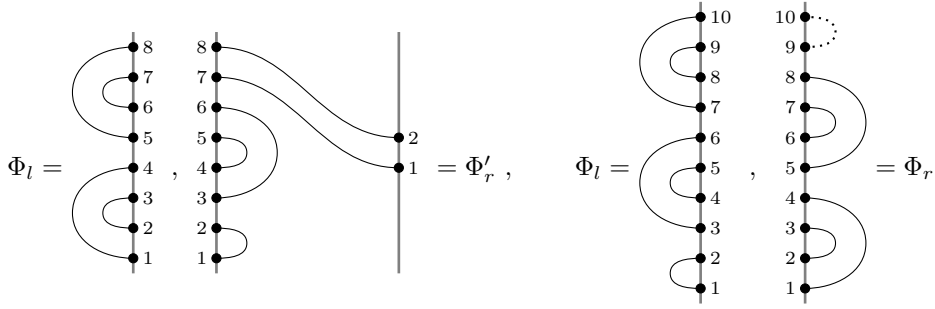
    
\begin{definition}\label{defn:phir and phil}
    Assume that $n \geq 1$. We take $\Phi_l \in \TL(0,2n)$ and $\Phi_r \in \TL(2n,0)$ to be as shown in \cref{fig: bubbleWriting1}. Precisely, let $\Phi_l \in \TL(2n,0)$ be the diagram which is defined as follows. If $2n \equiv 0 \ \mathrm{mod}(4)$, then for each $k \geq 1$ congruent to $1$ mod $4$ with $k+3 \leq 2n$, we connect $k$ to $k+3$ and $k+1$ to $k+2$ (producing a nested pair of cups). If $2n \equiv 2 \ \mathrm{mod}(4)$, then we `add a cup at the start and shift up', first connecting $1$ to $2$, then, for each $k \geq 3$ congruent to $3$ mod $4$ with $k+3 \leq 2n$, we connect $k$ to $k+3$ and $k+1$ to $k+2$ (again producing a nested pair of cups).

We now define the complementary diagram $\Phi_r \in \TL(2n,0)$. It will be useful to first define a `lift' $\Phi_r'$ to $\TL(2n,2)$ satisfying $\Phi_r' \cdot L_1 = \Phi_r$. To form $\Phi_r'$, we connect $2n$ on the left to 2 on the right and $2n-1$ on the left to $1$ on the right, then add consecutive pairs of nested cups, starting with connections $2n-5$ to $2n-2$ and $2n-4$ to $2n-3$, until there are fewer than four nodes left. If $2n \equiv 2 \ \mathrm{mod}(4)$ then we are done. If $2n \equiv 0 \ \mathrm{mod}(4)$ then we add a final cup from $1$ to $2$.

We then define $\Phi_r := \Phi_r' \cdot L_1$.
\end{definition} 

\begin{definition}\label{definition:Phi}
    We write $\Phi := \Phi_l \otimes \Phi_r \in \TL(0,2n) \otimes_{R} \TL(2n,0) = \CPL(2n)_1$.
\end{definition}

We record some elementary properties of these elements which we will need later.

\begin{lemma} \label{lemma: phiL - phiR'} Let $n \geq 1$. Then
$$\Phi_l \cdot \Phi_r' = R_1 \in \TL(0,2),$$
hence also
\[
\pushQED{\qed} 
\Phi_l \cdot \Phi_r = a \cdot \emptyset \in \TL(0,0).\qedhere
\popQED
\]    
\end{lemma}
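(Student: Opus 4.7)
My approach is direct diagrammatic computation, exploiting the simple form of the target $\TL(0,2)$ and the interlocking symmetry of $\Phi_l$ and $\Phi_r'$.

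First, I would observe that $\TL(0,2)$ is free of rank one, generated by $R_1$ (the only planar matching of two points on a single bar). So the first claim reduces to checking that the composition $\Phi_l \cdot \Phi_r'$ produces the underlying diagram $R_1$ with \emph{no} closed loops formed, i.e.\ the coefficient is $a^0 = 1$.

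To verify this, I would perform an arc-trace along the glued middle bar (the right bar of $\Phi_l$ identified with the left bar of $\Phi_r'$), starting at right-node $1$ of $\Phi_r'$. The top two strands of $\Phi_r'$ send right-nodes $1,2$ to left-nodes $2n-1, 2n$, respectively. The key structural observation is that the nested cup pairs of $\Phi_l$ (at levels $k \equiv 1$ or $3 \pmod 4$, with a possible bottom cup $(1,2)$) and the bottom cup pairs of $\Phi_r'$ (which are shifted by two in the middle-bar coordinate, plus a possible bottom cup $(1,2)$) \emph{interlock}: the inner cup of $\Phi_l$ at a given level meets the outer cup of $\Phi_r'$ at the adjacent level, and vice versa. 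Tracing, the arc therefore descends through $\Phi_l$ and $\Phi_r'$ alternately, switching sides of the middle bar at each cup, turns around at the unique unpaired bottom cup, and ascends back out to right-node $2$. Counting shows that this trace visits each of the $2 \cdot 2n$ middle-bar node-incidences exactly once, so no closed loops are formed, and the only arc in the result is a single cup from right-$1$ to right-$2$, namely $R_1$.

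The verification splits into the two cases $2n \equiv 0 \pmod 4$ and $2n \equiv 2 \pmod 4$, which differ only in whether the unpaired bottom cup belongs to $\Phi_r'$ or to $\Phi_l$, respectively; in either case it plays the role of the turnaround. Checking the small cases $2n = 2, 4, 6, 8$ by hand confirms the pattern, and one could formalize this with an easy induction on $n$ by peeling off the outermost nested pair of $\Phi_l$ together with the bottommost nested pair of $\Phi_r'$. I expect the only real obstacle is notational care with the indexing in the two parity cases; there is no conceptual difficulty.

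The second identity is then immediate from associativity of composition in $\TL$ together with the defining relation $\Phi_r = \Phi_r' \cdot L_1$:
$$\Phi_l \cdot \Phi_r = (\Phi_l \cdot \Phi_r') \cdot L_1 = R_1 \cdot L_1,$$
and the composition $R_1 \cdot L_1 \in \TL(0,0)$ visibly closes off a single loop (the right cup of $R_1$ meeting the left cup of $L_1$ on the middle bar), hence equals $a \cdot \emptyset$.
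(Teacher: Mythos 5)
Your proposal is correct: the paper states this lemma with no proof at all (the QED box appears in the statement itself), treating it as an immediate diagrammatic check, and your arc-trace verification --- showing the composition $\Phi_l \cdot \Phi_r'$ visits every middle-bar node exactly once and hence forms no closed loop, together with $\Phi_l\cdot\Phi_r = R_1\cdot L_1 = a\cdot\emptyset$ --- correctly supplies exactly the routine verification the authors leave implicit. The interlocking/alternation pattern you describe is the right mechanism, and the small cases you cite confirm it; only the bookkeeping of the two parity cases remains, as you note.
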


As a consequence we have the following.

\begin{lemma} \label{lemma: wind sock}
    Let $n>0$ and $i>0$. For any diagram $z \in \TL(0,2i)$, there exists a diagram $x \in \TL(0,2n)$ and a diagram $y \in \TL(2n,2i)$ such that $xy = z$.
\end{lemma}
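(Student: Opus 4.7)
The plan is to leverage \cref{lemma: phiL - phiR'}, which asserts $\Phi_l \cdot \Phi_r' = R_1 \in \TL(0,2)$. If I can factor $z = R_1 \cdot z'$ for some diagram $z' \in \TL(2, 2i)$, then associativity of composition in $\TL$ yields $z = \Phi_l \cdot (\Phi_r' \cdot z')$, and I may take $x := \Phi_l \in \TL(0, 2n)$ and $y := \Phi_r' \cdot z' \in \TL(2n, 2i)$. Thus the lemma reduces to factoring an arbitrary $z \in \TL(0,2i)$ through $R_1$.

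To construct such a $z'$, I would select the cup of $z$ containing the bottom-most right node, namely the cup $(1,b)$ for some $b \in \{2, \dots, 2i\}$. This cup is \emph{outermost} in the nesting order of $z$, since no cup of $z$ can enclose it (there is nothing below node $1$). I then define $z' \in \TL(2,2i)$ to consist of two through-strands connecting left node $1$ to right node $1$ and left node $2$ to right node $b$, together with the remaining $i-1$ cups of $z$ as right cups.

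Two verifications are needed. First, $z'$ must be a valid (planar) diagram. Since $(1,b)$ is outermost, every other cup of $z$ lies either entirely in $\{2, \dots, b-1\}$ (nested inside $(1,b)$) or entirely in $\{b+1, \dots, 2i\}$ (lying above $b$). The two through-strands of $z'$ partition its interior into exactly these two regions (plus an empty region below node $1$), so each remaining cup fits cleanly in one region and $z'$ is planar. Second, $R_1 \cdot z'$ must equal $z$ without $a$-factors: the cup of $R_1$ caps off the two through-strands of $z'$, producing the arc $(1,b)$, while the right cups of $z'$ are by construction the other cups of $z$, and no closed loop is created because $z'$ has no left cups. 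An analogous loop-free check applies to $\Phi_r' \cdot z'$: by \cref{defn:phir and phil} both right nodes of $\Phi_r'$ are endpoints of through-strands (to left nodes $2n-1$ and $2n$), and $z'$ has no left cups, so the composition introduces no loops and yields a genuine diagram in $\TL(2n,2i)$.

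The main subtlety is the choice of cup. If one were instead to pick an \emph{arbitrary} cup of $z$ to play the role of $(1,b)$, then any cup of $z$ nested \emph{outside} the chosen one would have one endpoint above and one below it, hence would span two of $z'$'s regions and could not be drawn planarly. This is why the argument requires the chosen cup to be outermost; picking the cup containing node $1$ is a canonical way to ensure this, and the rest of the argument is then formal.
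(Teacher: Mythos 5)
Your proposal is correct and follows essentially the same route as the paper: the paper also chooses an outermost cup of $z$ (one not contained in any other cup), cuts it open to get $z'\in\TL(2,2i)$ with $R_1\cdot z'=z$, and sets $x=\Phi_l$, $y=\Phi_r'\cdot z'$, concluding via \cref{lemma: phiL - phiR'}. Your version merely makes a canonical choice of outermost cup (the one containing node $1$) and spells out the planarity and loop-freeness checks that the paper leaves implicit.
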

\begin{proof} Choose any cup in $z$ which is not contained in any other cup, and cut it open to form $z' \in \TL(2,2i)$ such that $R_1 \cdot z' = z$. Set $x = \Phi_l$ and $y = \Phi_r' \cdot z'$. Then, using \cref{lemma: phiL - phiR'}, we have
\[
xy = \Phi_l \cdot \Phi_r' \cdot z' = R_1 \cdot z' = z.\qedhere
\]    
\end{proof}

\subsection{Extra structure on the algebra: weight grading and hook maps}

We now discuss extra structure on the algebra of planar loops that will be crucial in proving our main results. First, in certain circumstances, the dga $\CPL(2n)$ (and its modules $\CPL(0,2n,2i)$) admit an extra grading, which we call \emph{weight}, given informally by counting loops (look at \cref{fig:whatIsB}).

Recall that $\CPL(2n)$ has a canonical $R$-basis given by \emph{systems of pinned planar loops}. Such a system $D$ is in particular an isotopy class of some number $\ell$ of loops (disjoint circles) in the plane. We will say that $D$ \emph{forms $\ell$ loops}. 

\begin{definition}[Weight] \label{def: weight} Consider the pair $(R,a)$. Suppose that the commutative ring $R$ is non-negatively graded (in the sense that the multiplication satisfies $R_j \otimes R_k \mapsto R_{j+k}$), and that $a \in R$ is homogeneous of degree 1. For clarity, we will call the grading of a homogenous element $r\in R$ the \emph{rank} of $r$.

In this situation, we define the \emph{weight grading} as follows. If $r \in R$ is homogeneous of rank $j$ and $D \in \CPL(2n)$ is a system of planar loops forming $\ell$ loops, then $r \cdot D$ has weight $j+\ell$. Extend this linearly to define the weight grading on $\CPL(2n)$.

To define the grading on $\CPL(0,2n,2i)$ for $i>0$, recall from \cref{defn: diagrams Lk Rk and Lmax} that $\Lmax \in \TL(2i,0)$ is the diagram with $i$ cups between adjacent pairs of nodes. For $i \geq 1$, the \emph{weight} of an element $r \cdot D$, where $r \in R$, and $D$ is a basis element $D = (D_0 | \dots | D_q) \in \CPL(0,2n,2i)$, is the weight of $r \cdot (D_0 | \dots | (D_q \Lmax)) \in \CPL(0,2n,0)$. (Our choice of $\Lmax$ amongst all diagrams in $\TL(2i,0)$ is important only once, in \cref{lem: weight on outermost}.)
\end{definition}

We will use that this extra structure is present in the universal case $(\Z[a], a)$, where $a$ has rank 1, and $(\mathbb{Z},0)$, where we think of $\mathbb{Z}$ as concentrated in rank zero and $a=0$ in rank 1 (so that $(\Z, 0)$ is the quotient of $(\Z[a],a)$ by $a$).

We verify that this indeed defines a grading.

\begin{lemma} \label{lemma: grading indeed} The differential $d$ of $\CPL(0,2n,2i)$ preserves weight.
\end{lemma}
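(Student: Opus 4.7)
The plan is to reduce the general claim to the case of $\CPL(2n) = \CPL(0, 2n, 0)$, and then handle that case via a loop-counting argument.

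For the reduction, for $i \geq 1$ I would define the $R$-linear map $\pi \colon \CPL(0, 2n, 2i) \to \CPL(2n)$ sending a basis element $(D_0 | \dots | D_q)$ to $(D_0 | \dots | D_q \Lmax)$, with $D_q \Lmax$ expanded via composition in $\TL$ (which may introduce factors of $a$). By the very definition of weight on $\CPL(0, 2n, 2i)$, we have $\mathrm{weight}(D) = \mathrm{weight}(\pi(D))$ for every basis element $D$. A direct check shows that $\pi$ is a chain map: all face maps except the last commute with $\pi$ on the nose, while the last does so by the associativity identity $(D_{q-1} D_q) \Lmax = D_{q-1}(D_q \Lmax)$ in $\TL$. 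Weight preservation for $d$ on $\CPL(2n)$ would therefore transfer immediately to $\CPL(0, 2n, 2i)$.

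It remains to handle $\CPL(2n)$ itself. Given a basis element $D = (D_0 | \dots | D_q) \in \CPL(2n)_q$, viewed pictorially as a system of $\ell$ disjoint planar loops pinned by $q$ bars, we have $\mathrm{weight}(D) = \ell$. The face map $d_j$ corresponds to removing bar $j+1$, and the $\TL$ product $D_j D_{j+1} = a^{m_j} [D_j D_{j+1}]$ contributes a factor of $a^{m_j}$, where $m_j$ is the number of closed loops created in the composition. The key step I would carry out is to verify that the resulting basis element $[d_j(D)] \in \CPL(2n)_{q-1}$ contains exactly $\ell - m_j$ loops. This is a planar topology observation: each loop of $D$ either has at least one node on a bar other than bar $j+1$, in which case it remains pinned and persists as a loop of $[d_j(D)]$, or has all its nodes on bar $j+1$, in which case it becomes a free circle upon removal of that bar. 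The loops of the second type correspond bijectively to the closed loops created by the $\TL$ product $D_j \cdot D_{j+1}$, since both are precisely the cycles obtained by alternately traversing right-cups of $D_j$ and left-cups of $D_{j+1}$ through the nodes of bar $j+1$. Hence $\mathrm{weight}(d_j(D)) = m_j + (\ell - m_j) = \ell = \mathrm{weight}(D)$, as required. The step I expect to require the most care is this loop-counting bijection, which is a topological matching argument; once it is in hand, the rest of the proof is essentially formal.
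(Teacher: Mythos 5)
Your proof is correct and follows essentially the same route as the paper's: the paper's (much terser) argument simply asserts that the $k$-th face map geometrically removes the $k$-th bar and replaces each newly unpinned loop by a factor of $a$, which has rank $1$ — exactly the loop-counting identity $\mathrm{weight}(d_j(D)) = m_j + (\ell - m_j) = \ell$ that you verify in detail, and the reduction to $i=0$ via capping with $\Lmax$ is already built into the definition of the weight grading. Your version just makes explicit the bijection between loops supported entirely on the removed bar and the closed loops created by the $\TL$-composition $D_j\cdot D_{j+1}$, which the paper takes for granted from its pictorial description of the differential.
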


\begin{proof}
    Recall that $d$ was defined as an alternating sum of face maps, and that the $k$-th face map can be thought of geometrically as removing the $k$-th bar and replacing unpinned loops with factors of $a$. Since $a$ is homogeneous of rank 1, this leaves the weight unchanged.
\end{proof}

When we wish to emphasise the weight grading, we will write it second, so that $\CPL(0,2n,2i)_{q,w}$ is spanned by elements of homological degree $q$ and weight $w$. In light of \cref{lemma: grading indeed}, the differential has bidegree $(-1,0)$. This makes $\CPL(2n)$ a differential bigraded algebra, and $\CPL(0,2n,2i)$ a differential bigraded module over it; concretely, we have a multiplication map
$$\CPL(2n)_{q_1,w_1} \otimes \CPL(2n)_{q_2,w_2} \to \CPL(2n)_{q_1 + q_2,w_1+ w_2},$$
and an action
$$\CPL(2n)_{q_1,w_1} \otimes \CPL(0,2n,2i)_{q_2,w_2} \to \CPL(0,2n,2i)_{q_1 + q_2,w_1+ w_2}.$$

The second piece of extra structure is a comparison map $h\colon \CPL(2n) \to \CPL(2n+2)$ that we will call the hook map and which is visualised in \cref{fig:hook map}. 

To give the formal definition, recall from \cref{defn: diagrams Lk Rk and Lmax} that $R_1 \in \TL(2n, 2n+2)$ is the diagram with a single right cup at position 1, and $L_2 \in \TL(2n+2, 2n)$ is the diagram with a single left cup at position 2.

\begin{definition} \label{def: hook map}
    The \emph{hook map} is the map $h\colon \CPL(0,2n,2i) \hookrightarrow \CPL(0,2n+2,2i)$ which in degree $q$ is given by 
    \begin{align*}
        \TL(0, 2n) \otimes \TL_{2n}^{\otimes q-1} \otimes \TL(2n, 2i) &\to \TL(0, 2n+2) \otimes \TL_{2n+2}^{\otimes q-1} \otimes \TL(2n+2, 2i)\\
        (D_0 | D_1 | \dots | D_q | D_{q+1}) &\mapsto (D_0 R_1 | L_2 D_1 R_1 | \dots | L_2  D_q R_1 | L_2 D_{q+1}). 
    \end{align*}
\end{definition}

\begin{figure}[h]
    \begin{tikzpicture}[scale=0.4, baseline=(base),xscale=-1]
        \def\widthscale{4.5};

        \coordinate (base) at (0,2);
        \draw[gray,line width = 1](0,0.5)--(0,4.5);
        \draw[gray,line width = 1](\widthscale,0.5)--(\widthscale,4.5);
        \draw[gray,line width = 1](2*\widthscale,0.5)--(2*\widthscale,4.5);

        \foreach \y in {1,2, 3,4}{
            \foreach \x in {0,1,2}{
                \draw[fill=black]  (\widthscale*\x,\y) circle [radius=0.15];
                \draw (\widthscale*\x,\y) node[right] {};
            }
            
        }

        \draw (0,1) to[out=180,in=-90] (-1,1.5) to[out=90,in=180] (0,2);
        \draw (0,3) to[out=180,in=-90] (-1,3.5) to[out=90,in=180] (0,4);
        \draw (0,2) to[out=0,in=-90] (1,2.5) to[out=90,in=0] (0,3);

        \draw (0,1) to[out=0,in=180] (\widthscale,3);
        \draw (0,4) to[out=0,in=180] (\widthscale,4);

        \draw (\widthscale,1) to[out=180,in=-90] (\widthscale-1,1.5) to[out=90,in=180] (\widthscale,2);
        \draw (\widthscale,1) to[out=0,in=-90] (\widthscale+1,1.5) to[out=90,in=0] (\widthscale,2);

        \draw (\widthscale,3) to[out=0,in=180] (2*\widthscale,1);
        \draw (\widthscale,4) to[out=0,in=180] (2*\widthscale,2);

        \draw (2*\widthscale,2) to[out=0,in=-90] (2*\widthscale+1,2.5) to[out=90,in=0] (2*\widthscale,3);
        \draw (2*\widthscale,3) to[out=180,in=-90] (2*\widthscale-1,3.5) to[out=90,in=180] (2*\widthscale,4);
        \draw (2*\widthscale,1) to[out=0,in=-90] (2*\widthscale+2,2.5) to[out=90,in=0] (2*\widthscale,4);
    \end{tikzpicture}
\quad
    $\longmapsto$
\quad
    \begin{tikzpicture}[scale=0.4, baseline=(base),xscale=-1]
        
        \def\widthscale{4.5};

        \coordinate (base) at (0,4);
        \draw[gray,line width = 1](0,2.5)--(0,6.5);
        \draw[gray,line width = 1](\widthscale,2.5)--(\widthscale,6.5);
        \draw[gray,line width = 1](2*\widthscale,2.5)--(2*\widthscale,6.5);

        \draw[red,line width = 1](0,0.5)--(0,2.5);
        \draw[red,line width = 1](\widthscale,0.5)--(\widthscale,2.5);
        \draw[red,line width = 1](2*\widthscale,0.5)--(2*\widthscale,2.5);

        \draw[dotted] (0,3) to[out=180,in=-90] (-1,3.5) to[out=90,in=180] (0,4);
        \draw (0,5) to[out=180,in=-90] (-1,5.5) to[out=90,in=180] (0,6);
        \draw (0,4) to[out=0,in=-90] (1,4.5) to[out=90,in=0] (0,5);

        \draw (0,3) to[out=0,in=180] (\widthscale,5);
        \draw (0,6) to[out=0,in=180] (\widthscale,6);

        \draw[dotted] (\widthscale,3) to[out=180,in=-90] (\widthscale-1,3.5) to[out=90,in=180] (\widthscale,4);
        \draw (\widthscale,3) to[out=0,in=-90] (\widthscale+1,3.5) to[out=90,in=0] (\widthscale,4);

        \draw[dotted] (\widthscale,5) to[out=0,in=180] (2*\widthscale,3);
        \draw (\widthscale,6) to[out=0,in=180] (2*\widthscale,4);

        \draw (2*\widthscale,4) to[out=0,in=-90] (2*\widthscale+1,4.5) to[out=90,in=0] (2*\widthscale,5);
        \draw (2*\widthscale,5) to[out=180,in=-90] (2*\widthscale-1,5.5) to[out=90,in=180] (2*\widthscale,6);
        \draw (2*\widthscale,3) to[out=0,in=-90] (2*\widthscale+2,4.5) to[out=90,in=0] (2*\widthscale,6);

        \draw[red] (0,1) to[out=180,in=-90] (-2,2.5) to[out=90,in=180] (0,4);
        \draw[red] (0,2) to[out=180,in=-90] (-1,2.5) to[out=90,in=180] (0,3);
        \draw[red] (0,1) to[out=0,in=-90] (1,1.5) to[out=90,in=0] (0,2);
        
        \draw[red] (\widthscale,1) to[out=180,in=-90] (\widthscale-2,2.5) to[out=90,in=180] (\widthscale,4);
        \draw[red] (\widthscale,2) to[out=180,in=-90] (\widthscale-1,2.5) to[out=90,in=180] (\widthscale,3);

        \draw[red] (\widthscale,1) to[out=0,in=-90] (\widthscale+1,1.5) to[out=90,in=0] (\widthscale,2);

        \draw[red] (\widthscale,5) to[out=0,in=180] (2*\widthscale,1);

        \draw[red] (2*\widthscale,2) to[out=180,in=-90] (2*\widthscale-1,2.5) to[out=90,in=180] (2*\widthscale,3);

        \draw[red] (2*\widthscale,1) to[out=0,in=-90] (2*\widthscale+1,1.5) to[out=90,in=0] (2*\widthscale,2);

        \fill[pattern = north west lines, pattern color = red] (0,3) to[out=180,in=-90] (-1,3.5) to[out=90,in=180] (0,4) to[out=180,in=90] (-2,2.5) to[out=-90,in=180] (0,1) to[out=0,in=-90] (1,1.5) to[out=90,in=0] (0,2) to[out=180,in=-90] (-1,2.5) to[out=90,in=180] (0,3);

        \fill[pattern = north west lines, pattern color = red] (\widthscale,3) to[out=180,in=-90] (\widthscale-1,3.5) to[out=90,in=180] (\widthscale,4) to[out=180,in=90] (\widthscale-2,2.5) to[out=-90,in=180] (\widthscale,1) to[out=0,in=-90] (\widthscale+1,1.5) to[out=90,in=0] (\widthscale,2) to[out=180,in=-90] (\widthscale-1,2.5) to[out=90,in=180] (\widthscale,3);

        \fill[pattern = north west lines, pattern color = red] (2*\widthscale,3) to[out=180,in=0] (\widthscale,5) to[out=0,in=180]  (2*\widthscale,1) to[out=0,in=-90] (2*\widthscale+1,1.5) to[out=90,in=0] (2*\widthscale,2) to[out=180,in=-90] (2*\widthscale-1,2.5) to[out=90,in=180] (2*\widthscale,3);

        \foreach \y in {1,2, 3,4,5,6}{
            \foreach \x in {0,1,2}{
                \draw[fill=black]  (\widthscale*\x,\y) circle [radius=0.15];
                \draw (\widthscale*\x,\y) node[right] {};
            }
            
        }
    \end{tikzpicture}

    \caption{The effect of the hook map $h\colon  \CPL(4) \to \CPL(6)$ on a basis element.}
    \label{fig:hook map}
\end{figure}

\begin{lemma}
    \label{lemma:hook-map}
        The hook map $h\colon \CPL(0,2n,2i) \hookrightarrow \CPL(0,2n+2,2i)$ is:
    \begin{enumerate}
        \item a map of dgas when $i=0$, and
        \item a map of dg-$L(2n)$-modules when $i \geq 0$.
    \end{enumerate}
    Further, $h$ is injective, and is compatible with the weight grading of \cref{def: weight} (when $(R,a)$ is such that the weight grading is defined).
\end{lemma}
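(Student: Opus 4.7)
My plan begins with the key identity $R_1 L_2 = \id_{2n}$ in $\TL_{2n}$, verified by a direct trace of strands: the right cup of $R_1$ at positions $1, 2$ meets the left cup of $L_2$ at positions $2, 3$, straightening through the shared nodes into a single through-strand from left node $1$ to right node $1$; for $k \geq 2$ the through-strands of $R_1$ and $L_2$ align to send left $k$ to right $k$; and no closed loop forms during the composition.

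Granted this identity, parts (a) and (b) reduce to bookkeeping. For the chain-map property, an interior face map of $h(D_0 | \ldots | D_{q+1})$ computes $(L_2 D_i R_1)(L_2 D_{i+1} R_1) = L_2 D_i (R_1 L_2) D_{i+1} R_1 = L_2 D_i D_{i+1} R_1$, which is $h$ applied to the $i$-th face map of $(D_0 | \ldots | D_{q+1})$; the boundary face maps at $i=0$ and $i=q$ work analogously via $D_0 R_1 \cdot L_2 D_1 R_1 = D_0 D_1 R_1$ and $L_2 D_q R_1 \cdot L_2 D_{q+1} = L_2 D_q D_{q+1}$. For part (a), the multiplication in $\CPL(2n)$ uses the composition $\TL(2n,0) \otimes \TL(0,2n) \to \TL_{2n}$ to merge adjacent boundary factors, and the equality $(L_2 D_{q+1})(E_0 R_1) = L_2(D_{q+1} E_0) R_1$ shows $h$ respects this merger. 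Part (b) is the same verification with the module action $\TL(2n,0) \otimes \TL(0,2i) \to \TL(2n,2i)$ replacing the algebra multiplication. Injectivity is then immediate, since each of the maps $D \mapsto D R_1$, $D \mapsto L_2 D R_1$, and $D \mapsto L_2 D$ is injective on the Temperley--Lieb diagram basis (the inserted cups are canonically identifiable in the image and can be excised to recover $D$), so $h$ acts injectively on the tensor basis of $\CPL(0,2n,2i)$.

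The main obstacle is weight compatibility, i.e., showing that $h(D)$ has the same number of loops as $D$ for every basis element $D$. I propose to exhibit this bijectively via an explicit graph surjection $\pi$ from the ``loop graph'' of $h(D)$ onto that of $D$: bar-by-bar, $\pi$ collapses the two new positions $1, 2$ together with the old outermost position $3$ all to the old position $1$, and sends position $k \ge 4$ to position $k - 2$. Under $\pi$, the inserted cups at $(1, 2)$ and $(2, 3)$ become trivial self-edges, while every other cup or through-strand of $h(D)$ projects to the corresponding feature of $D$. Since the three preimages of each bar's old position $1$ are already connected in $h(D)$ through the chain of new cups joining positions $1, 2, 3$, the induced map on connected components neither merges nor splits loops, yielding a bijection between loops of $h(D)$ and loops of $D$.
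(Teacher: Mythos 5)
Your proposal is correct and follows the paper's proof in all essentials: the same key identity $R_1 \cdot L_2 = \id_{\TL_{2n}}$ (with no closed loop created) drives the compatibility with face maps and products, and injectivity is deduced from injectivity of $(L_2\cdot -)$ and $(-\cdot R_1)$ exactly as in the paper. Your node-collapsing bijection for the weight grading is just a more explicit unpacking of the paper's one-line observation that this identity forces the number of loops formed by a basis element to be unchanged by $h$.
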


\begin{proof} It is immediate from the definitions that $h$ gives a map of graded algebras/modules. It is injective because the multiplication maps $(L_2 \cdot-)$ and $(-\cdot R_1)$ are injective. The identity $R_1 \cdot L_2 = \id_{\TL_{2n}}$ implies that $h$ respects the face maps, so is a map of differential graded algebras/modules. This identity also implies that the number of loops formed by a basis element is not changed by applying $h$, so $h$ must respect the weight grading.
\end{proof}

\subsection{Change of coefficients}\label{sec:ChangingCoeff}
So far we have worked with the commutative ring $R$ and the parameter $a \in R$ fixed, but $\CPL(2n) = \CPL(2n;R,a)$ is functorial in this data and we will exploit this in several ways.

If $\phi\colon S \to R$ is a homomorphism of commutative rings, and $b\in S$ satisfies $a=\phi(b)$, then there is an isomorphism of dgas
$$\CPL(2n;R,a) = \CPL(2n;S,b) \otimes_S R.$$
We will often apply this to the homomorphism $\phi \colon\mathbb{Z}[a] \to R$ sending the formal symbol $a$ to $a \in R$, to reduce to studying the universal case $\CPL(2n; \mathbb{Z}[a], a)$.

If $x \in R$ is not a zerodivisor, then there is a short exact sequence of $R$-modules $0 \to R \overset{x \cdot -}\to R \overset{q}\to R/x \to 0$, where $q$ is the quotient map. Applying $\CPL(2n;R,a) \otimes_R -$ to this gives a short exact sequence of left $\CPL(2n;R,a)$-modules
$$\CPL(2n;R,a) \overset{x \cdot -}\longrightarrow \CPL(2n;R,a) \overset{q}\longrightarrow \CPL(2n;R/x,a \text{ mod } x).$$

If $a \in R$ is not a zerodivisor then we may apply this with $x=a$. But as $a = d(\Phi) \in \CPL(2n;R,a)_0$ is a boundary ($\Phi$ as in \cref{definition:Phi}), the map $a \cdot -$ is chain nullhomotopic, giving short exact sequences
$$0 \to H_i(\CPL(2n;R,a)) \overset{q}\to H_i(\CPL(2n;R/a,0)) \overset{\partial}\to H_{i-1}(\CPL(2n;R,a)) \to 0.$$
This identifies $H_*(\CPL(2n;R,a))$ with the kernel of the $a$-Bockstein operator $\beta_a := q \circ \partial$ on $H_*(\CPL(2n;R/a,0))$. It applies in particular to the universal case $(\mathbb{Z}[a], a)$, so reduces understanding $H_*(\CPL(2n;\mathbb{Z}[a], a))$ to understanding $H_*(\CPL(2n;\mathbb{Z},0))$ and the action of $\beta_a$.

Computationally, as $\CPL(2n;S,b)$ is a bounded below chain complex of free and hence flat $S$-modules, there is a Universal Coefficient spectral sequence \cite[Theorem 5.6.4]{Weibel}
$$E^2_{p,q} = \Tor_p^S(H_q(\CPL(2n;S,b)), R) \Longrightarrow H_{p+q}(\CPL(2n;R,a)).$$
We will not make use of this, but the reader may find it useful in making calculations over rings other than those treated in \cref{sec:Calc}.

\section{Small models}
\label{section: small models}

In this section we prove \cref{thm:Main}. We will first consider the parameter zero case $(R, a) = (\mathbb{Z}, 0)$ with $\mathbb{Z}$ concentrated in rank zero. This is the input for the universal case $(R, a) = (\mathbb{Z}[a], a)$, where $a$ has rank one, from which \cref{thm:Main} is deduced by change of coefficients. To prove the parameter zero case we will use the following technical result.

\begin{theorem}
    \label{theorem:derived-outermost-cup-complex-resolution}
    Let $R$ be concentrated in rank zero, let $a = 0$, and consider the differential bigraded algebra $\CPL(2n)=\CPL(2n; R, 0)$. Then the trivial module $R$ of $\CPL(2n)$ $($concentrated in bidegree $(0,0))$ admits a resolution
    $$0 \leftarrow R \leftarrow \derivedOutermost _0(2n) \leftarrow \derivedOutermost _1(2n) \leftarrow \derivedOutermost _2(2n) \leftarrow \cdots \leftarrow \derivedOutermost _n(2n) \leftarrow 0$$
    of left modules $\derivedOutermost_i(2n)$ over $\CPL(2n)$ with the following properties:
    \begin{enumerate}
        \item There are preferred equivalences $w_i\colon \Sigma^{i,i} \CPL(2n) \xrightarrow{\simeq} \derivedOutermost_i(2n)$ of differential bigraded modules over $\CPL(2n)$ such that 
        $$\Sigma^{i-1,i-1} \CPL(2n) \simeq \derivedOutermost_{i-1}(2n) \leftarrow \derivedOutermost_i(2n) \simeq \Sigma^{i,i} \CPL(2n)$$
        is given by right multiplication by $i\Phi$. \label{ThmPart1}
        \item The hook map $h\colon \CPL(2n) \to \CPL(2n+2)$ extends to a map of resolutions $\derivedOutermost_*(2n) \to \derivedOutermost_*(2n+2)$. The $i$-th term of this map corresponds to $\Sigma^{i,i}h$ under the equivalences $w_i$.\label{ThmPart2}
    \end{enumerate}
\end{theorem}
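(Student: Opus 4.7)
The plan is to first build a resolution at the level of $\TL_{2n}$-modules from the outermost cup complex, and then promote it to a resolution by $\CPL(2n)$-modules via the bar-construction functor of \cref{definition: dg modules general}.

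First I would introduce a non-derived outermost cup complex $\OutermostCupComplex_*(2n)$: a chain complex of left $\TL_{2n}$-modules where $\OutermostCupComplex_i(2n)$ is generated by diagrams equipped with an ordered choice of $i+1$ outermost left cups, with face maps that forget each such cup in turn (with signs). As indicated in the introduction, this complex is naturally built from the cell modules of $\TL_{2n}$. To show $\OutermostCupComplex_*(2n) \to R$ is a resolution (using $a=0$), I would establish acyclicity of the two auxiliary complexes mentioned in the introduction --- the innermost cup complex $\InnermostCupComplex$ and its submaximal variant $\SubmaxInnermostCupComplex$ --- via a poset/cone argument, and then glue their local acyclicity into the global acyclicity of $\OutermostCupComplex_*(2n)$ by a Mayer--Vietoris or spectral sequence argument.

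Next I would apply the functor of \cref{definition: dg modules general} termwise to obtain $\derivedOutermost_*(2n)$, a chain complex of left $\CPL(2n)$-modules; acyclicity of $\OutermostCupComplex_* \to R$ should pass up to $\derivedOutermost_* \to R$ by a spectral sequence comparing bar degree and cup-complex degree (here flatness of the relevant two-sided bar construction is essential, and the assumption $a=0$ keeps all weight gradings under control). To produce the equivalence $w_i\colon \Sigma^{i,i}\CPL(2n) \xrightarrow{\sim} \derivedOutermost_i(2n)$, I would show that each $\OutermostCupComplex_i(2n)$ is cyclic as a $\TL_{2n}$-module, generated by a canonical standard configuration of $i+1$ outermost cups --- cyclicity being obtained by iterated use of \cref{lemma: wind sock}. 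The homological shift $i$ comes from $i$ additional tensor factors in the bar direction, and the weight shift $i$ from the fact that the standard configuration forms $i$ extra loops. Under $w_i$, forgetting an outermost cup and passing to the bar construction releases an unpinned loop that precisely realises $\Phi$ in the sense of \cref{definition:Phi} and \cref{lemma: phiL - phiR'}; among the $i+1$ face maps, one is absorbed into the identification $w_{i-1}$, leaving exactly $i$ copies of right multiplication by $\Phi$, hence the coefficient $i$.

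Compatibility with the hook map (part (\ref{ThmPart2})) should be essentially formal: as illustrated in \cref{fig:hook map}, $h$ adds a nested cup pair on the outside, preserving both the outermost cup combinatorics and the standard cyclic generators, so it extends termwise to $\derivedOutermost_*(2n) \to \derivedOutermost_*(2n+2)$ and, via the $w_i$'s, corresponds to $\Sigma^{i,i}h$. The main obstacle I expect is twofold: first, producing the acyclicity of $\OutermostCupComplex_*(2n)$ in a form that promotes cleanly to the derived level --- the interaction between outermost, innermost, and submaximal innermost complexes is likely to be the technical heart, and must use $a=0$ in a controlled way; second, obtaining the coefficient $i$ with the correct sign, which requires a careful bookkeeping of the bar-construction face maps against the cup-forgetting face maps so that all but $i$ of the contributions cancel or are absorbed into normalisations.
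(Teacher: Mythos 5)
Your overall architecture---define a non-derived outermost cup complex of $\TL_{2n}$-modules, prove it acyclic, apply the functor of \cref{definition: dg modules general} termwise, then identify the terms and boundary maps---matches the paper, and your treatment of acyclicity passing to the derived level and of hook-map compatibility is essentially right. However, the two identifications that carry all the content of the theorem are not established by the mechanisms you propose, and these are genuine gaps. For the equivalence $w_i\colon\Sigma^{i,i}\CPL(2n)\simeq\derivedOutermost_i(2n)$, you invoke cyclicity of $\OutermostCupComplex_i(2n)$ plus ``$i$ additional tensor factors in the bar direction''. Neither works: cyclicity only gives a surjection from $\CPL(2n)$, not an equivalence, since $\OutermostCupComplex_i(2n)\cong\cell(2n,2i)$ is a non-projective quotient of $\TL(2n,2i)$, so applying the derived construction to it does not produce a shifted free module; and there are no extra bar tensor factors, as $\derivedOutermost_i(2n)_q=\TL(0,2n)\otimes\TL_{2n}^{\otimes q-1}\otimes\OutermostCupComplex_i(2n)$ has the same bar length as $\CPL(2n)_q$. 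The actual argument (\cref{prop: suspension of dga is derived cell}) resolves $\cell(2n,2i)$ by the innermost cup complex, whose terms in resolution degrees $p<i$ are projective (\cref{cor:InnIsProj}) and hence have acyclic derived versions (\cref{lem:DInnAcyclic}), while the top term gives $\Sigma^{0,i}\CPL(2n)$ (\cref{lemma: E_i E_i (B) 0}); the homological shift $i$ is the length of that resolution, and the weight shift is the $i$ loops formed on closing the maximal dashed cups. This step is the technical heart (it is essentially equivalent to \cref{theorem: shift}) and cannot be replaced by a cyclicity observation. Relatedly, the innermost and submaximal complexes are not inputs to the acyclicity of $\OutermostCupComplex_*(2n)$, which is a self-contained Mayer--Vietoris argument; their roles are, respectively, projective resolution of the cell modules and explicit computation of the boundary.

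For the coefficient $i\Phi$, your bookkeeping is also off: in the paper's indexing $\OutermostCupComplex_i(2n)$ carries $i$ dashed cups, hence $i$ face maps, and nothing is ``absorbed into $w_{i-1}$''. Moreover, undashing a cup does not directly release a loop realising $\Phi$; it is right multiplication by $L_{2k+1}$ on cell modules, and $\Phi$ only appears after tracing the boundary through the zig-zags defining $w_i$ and $w_{i-1}$. That trace needs (i) an explicit chain-level extension $\widetilde{\partial_i}$ of the outermost boundary to the innermost resolutions, which is exactly what the submaximal cup complex provides (\cref{def: face map lift to submax}, \cref{lemma: new extension from submaximal}), and (ii) a homotopy lift through the acyclic middle of the resolution, where the factorisation $R_1=\Phi_l\cdot\Phi_r'$ of \cref{lemma: phiL - phiR'} makes $\Phi_l\otimes\Phi_r$ appear. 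Each of the $i$ face maps then contributes $+\Phi$ (two signs cancel for each term), summing to $i\Phi$ with no cancellation. Without these ingredients the coefficient computation does not go through.
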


For now we will take this resolution as given: it will be constructed in \cref{def: derived outermost}, and is called the \emph{derived complex of outermost cups}. \cref{section: complexes} to \cref{section:proof of main technical thm} introduce the technical set up and prove this theorem.

It will often be convenient, given a chain complex $C_0 \leftarrow C_1 \leftarrow \dots \leftarrow C_n$ of chain complexes $C_i$, to write $\left[ C_0 \leftarrow C_1 \leftarrow \dots \leftarrow C_n \right]$ for the totalisation of the associated double complex. In particular, $C_0$ is a subcomplex with no degree shift.

\subsection{The argument for $(\mathbb{Z}, 0)$}
In this section we fix $(R,a)=(\Z,0)$. As $a=0$, the dga $L(2n)$ has a weight grading.

\subsubsection{The existence of a dga model} Our first goal is to establish the (bi)degrees of the generators of a quasi-free model for $L(2n)$, as follows. Later we will determine the differential on this model.

\begin{proposition}\label{prop:Indec}
There is an equivalence 
$$(T_\mathbb{Z}[x_1, x_3, \ldots, x_{2n-1}], d) \overset{\sim}\to \CPL(2n)$$
of differential bigraded algebras, where $|x_{2i-1}| = (2i-1, i)$, for some differential $d$.

Furthermore, the model for $\CPL(2n)$ may be obtained from the model for $\CPL(2n-2)$ by attaching a single $(2n-1, n)$-cell.
\end{proposition}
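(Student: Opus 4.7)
The plan is to induct on $n$, using \cref{theorem:derived-outermost-cup-complex-resolution} to determine the bidegrees of the generators and the hook-map compatibility to carry out the inductive step. The base case $n=0$ is trivial, since $\CPL(0)=R$ and the empty tensor algebra is an isomorphic model.

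The starting point of the argument is to compute $\Tor^{\CPL(2n)}_{*,*}(R,R)$ using the semi-free resolution $\derivedOutermost_*(2n)$. Since each $\derivedOutermost_i(2n)\simeq \Sigma^{i,i}\CPL(2n)$, and the connecting maps (right multiplication by $i\Phi$) become zero after applying $R\otimes_{\CPL(2n)}(-)$ --- because $\Phi$ sits in positive bidegree $(1,1)$ and hence in the augmentation ideal --- the totalisation has trivial internal differential. Hence $\Tor^{\CPL(2n)}_{*,*}(R,R)$ is $R$ in total bidegrees $(2i,i)$ for $0\leq i\leq n$, and zero elsewhere. The standard bar-construction correspondence between quasi-free minimal models and $\Tor$ --- whereby a generator in internal bidegree $(p,w)$ yields a Tor class in total bidegree $(p+1,w)$ --- then forces the indecomposables of any minimal model to lie in bidegrees $(2i-1,i)$ for $i=1,\ldots,n$, with exactly one generator in each.

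For the inductive step, assume $f\colon M_{n-1}:=(T_R[x_1,\ldots,x_{2n-3}],d_{n-1})\xrightarrow{\sim}\CPL(2n-2)$ has been constructed. Composing with the hook map $h\colon\CPL(2n-2)\to\CPL(2n)$, which is a dga map by \cref{lemma:hook-map}, yields a dga map $\tilde f\colon M_{n-1}\to\CPL(2n)$. The second part of \cref{theorem:derived-outermost-cup-complex-resolution} shows that $h$ extends to a map of derived resolutions compatible with the identifications $\derivedOutermost_i\simeq\Sigma^{i,i}\CPL$, so after tensoring with $R$ over the respective algebras each component becomes the identity on $\Sigma^{i,i}R$. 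Consequently $\tilde f_*\colon \Tor^{\CPL(2n-2)}_{*,*}(R,R)\to \Tor^{\CPL(2n)}_{*,*}(R,R)$ identifies the first $n$ Tor classes and has cokernel a single copy of $R$ in bidegree $(2n,n)$.

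To complete the induction, I would choose a cycle $z\in M_{n-1}$ of bidegree $(2n-2,n)$ whose image under $\tilde f$ is a boundary representing this missing Tor class, and define $M_n:=(M_{n-1}\langle x_{2n-1}\rangle,d_n)$ with $d_n(x_{2n-1})=z$. The extended map $M_n\to\CPL(2n)$ then induces an isomorphism on $\Tor^{(-)}_{*,*}(R,R)$ and is therefore a quasi-isomorphism, both sides being connective augmented dgas. The main obstacle will be establishing the existence of a suitable cycle $z$ and verifying that attaching this single cell indeed yields a quasi-isomorphism; this is an obstruction-theoretic argument for minimal models of dgas, made tractable by the minimality of $\derivedOutermost_*(2n)$ and the isolation of the new Tor class in bidegree $(2n,n)$.
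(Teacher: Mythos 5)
Your proposal follows essentially the same route as the paper: compute $\mathbb{Z}\otimes^{\mathbb{L}}_{\CPL(2n)}\mathbb{Z}$ from the resolution $\derivedOutermost_*(2n)$ of \cref{theorem:derived-outermost-cup-complex-resolution} (the collapse you get either by your augmentation-ideal observation or by sparsity), use part (b) of that theorem to see that the hook map induces an isomorphism onto everything except a single $\mathbb{Z}$ in bidegree $(2n,n)$, and then attach one cell to the inductively given model for $\CPL(2n-2)$. The two points you explicitly defer --- the existence of a suitable attaching cycle $z$, and the claim that a map of connective augmented dgas inducing an isomorphism on $\Tor$ is a quasi-isomorphism --- are precisely what the paper supplies, and both come from a single tool: a Hurewicz principle for connected dgas (\cref{lemma: hurewicz}), which says that for a map $A\to B$ of connected dgas with $H_i(B,A)=0$ for $i<n$, the natural map $H_i(B,A)\to H_{i+1}(\mathbb{Z}\otimes^{\mathbb{L}}_B\mathbb{Z},\,\mathbb{Z}\otimes^{\mathbb{L}}_A\mathbb{Z})$ is an isomorphism for $i\le n$. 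Applied to $\tilde f\colon M_{n-1}\to\CPL(2n)$ this lets you lift the generator of the relative group in bidegree $(2n,n)$ to a class in $H_{2n-1,n}(\CPL(2n),M_{n-1})$, which is literally the data of a cycle $z\in M_{n-1}$ of bidegree $(2n-2,n)$ together with $x\in\CPL(2n)$ satisfying $dx=\tilde f(z)$ --- this is how the existence of $z$ is established, rather than by a separate obstruction-theoretic argument; applied again to $M_n\to\CPL(2n)$ (inductively on degree) it gives the Whitehead-type conclusion that your $\Tor$-equivalence is a quasi-isomorphism. So your outline is correct and matches the paper's strategy, but it is incomplete at exactly the point you flag: you need to state and prove (or cite) this Hurewicz principle, which the paper does via the normalised bar resolution and a connectivity estimate on the filtration of the total homotopy cofibre.
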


To prove this Proposition we will make use of a Hurewicz principle for dgas, which will allow us to obtain a model for $L(2n)$ by calculating the homology of the derived tensor product $\mathbb{Z} \otimes^\mathbb{L}_{\CPL(2n)} \mathbb{Z}$, which we will be able to do using \cref{theorem:derived-outermost-cup-complex-resolution}. Although this principle is well-known in some circles, in order to be self-contained we state and prove it below.

Say that a dga $A$ is \emph{connected} if it vanishes in negative degrees and in addition the unit map $1\colon \mathbb{Z} \to H_0(A)$ is an isomorphism; a connected dga has a unique augmentation $\epsilon\colon A \to H_0(A) \cong \mathbb{Z}$, using which we may form the derived tensor product $\mathbb{Z} \otimes^\mathbb{L}_{A} \mathbb{Z}$.

\begin{lemma}[Hurewicz principle] \label{lemma: hurewicz}
Let $A \to B$ be a map of connected dgas. Then there is an induced map
$$h\colon  H_i(B,A) \to H_{i+1}(\mathbb{Z} \otimes_B^\mathbb{L} \mathbb{Z}, \mathbb{Z} \otimes_A^\mathbb{L} \mathbb{Z}),$$
and if $H_i(B,A)=0$ for all $i<n$ then $h$ is an isomorphism for all $i \leq n$.
\end{lemma}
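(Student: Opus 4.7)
The plan is to construct $h$ via the inclusion of $1$-simplices in the reduced bar construction, and then to establish the isomorphism range by a simplicial-degree filtration of the cone. Concretely, work with the reduced bar construction $\overline{\Bar}(A) \simeq \mathbb{Z} \otimes^{\mathbb{L}}_A \mathbb{Z}$: its subcomplex of $1$-simplices is the suspension $\Sigma \overline{A}$ of the augmentation ideal, and the inclusion $\Sigma \overline{A} \hookrightarrow \overline{\Bar}(A)$ is a chain map natural in the augmented connected dga $A$. Applied to the map $A \to B$, the resulting commuting square yields a chain map
\[
\mathrm{Cone}\bigl(\Sigma \overline{A} \to \Sigma \overline{B}\bigr) \longrightarrow \mathrm{Cone}\bigl(\overline{\Bar}(A) \to \overline{\Bar}(B)\bigr).
\]
Since $A$ and $B$ are connected, $H_{i+1}$ of the left cone identifies with $H_i(\mathrm{Cone}(\overline{A} \to \overline{B})) = H_i(B,A)$, so taking $H_{i+1}$ of the displayed map defines $h$.

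The core technical step is to filter the right cone by simplicial degree $p$. Writing $F_p$ for the subcomplex of simplicial degree $\leq p$, we have $F_0 = 0$ and $F_1 = \mathrm{Cone}(\Sigma \overline{A} \to \Sigma \overline{B})$, with the inclusion $F_1 \hookrightarrow \mathrm{Cone}(\overline{\Bar}(A) \to \overline{\Bar}(B))$ realising $h$ on homology. For $p \geq 2$ the graded pieces are
\[
F_p / F_{p-1} \;\cong\; \Sigma^p\,\mathrm{Cone}\bigl(\overline{A}^{\otimes p} \to \overline{B}^{\otimes p}\bigr),
\]
since the simplicial face maps strictly lower filtration. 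It suffices to show these graded pieces vanish on $H_{\leq n+1}$ whenever $p \geq 2$, for then the associated spectral sequence (which converges because in total degree $m$ the filtration stabilises at $p \leq m/2$) shows that $F_1 \hookrightarrow \mathrm{Cone}(\overline{\Bar}(A) \to \overline{\Bar}(B))$ is an isomorphism on $H_{\leq n+1}$, which is exactly the asserted range for $h$.

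To prove this connectivity estimate I would first replace $A \to B$ by a cofibration of connected dgas, so that $\overline{A} \to \overline{B}$ is a degreewise split injection of complexes of flat $\mathbb{Z}$-modules with cofiber $C \simeq \mathrm{Cone}(\overline{A} \to \overline{B})$, which is $(n-1)$-connected by hypothesis. Factoring $\overline{A}^{\otimes p} \hookrightarrow \overline{B}^{\otimes p}$ into $p$ stages, each changing one tensor factor from $\overline{A}$ to $\overline{B}$, the cofiber of each stage is a tensor product with exactly one $C$-factor and $(p-1)$ factors from $\{\overline{A}, \overline{B}\}$. Since $\overline{A}$ and $\overline{B}$ are $0$-connected while $C$ is $(n-1)$-connected, a K\"unneth estimate gives that each such tensor product is at least $(n+p-2)$-connected, so $\mathrm{Cone}(\overline{A}^{\otimes p} \to \overline{B}^{\otimes p})$ is $(n+p-2)$-connected and after the suspension by $p$ the subquotient $F_p/F_{p-1}$ is $(n+2p-2)$-connected; for $p \geq 2$ this is at least $n+2$, as required. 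The main obstacle will be the careful bookkeeping of these cofibrant replacements and K\"unneth bounds to rule out unwanted $\Tor$ contributions in the iterated tensor products.
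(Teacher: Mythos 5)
Your proposal is correct and follows essentially the same route as the paper: both construct $h$ from the simplicial-degree-$\leq 1$ part of the (reduced/normalised) bar construction, filter the cone by simplicial degree so that the associated graded pieces are $\Sigma^p\,\mathrm{Cone}(\overline{A}^{\otimes p}\to\overline{B}^{\otimes p})$ for $p\geq 2$, and bound their connectivity using the hypothesis together with the $0$-connectedness of $\overline{A}$ and $\overline{B}$ (the paper handles flatness by taking levelwise free replacements rather than a cofibration, and packages the degree-$\leq 1$ part as the homotopy pushout $\mathrm{st}(A)\simeq\mathbb{Z}\oplus\Sigma\overline{A}$). The only quibble is that your phrase ``vanish on $H_{\leq n+1}$'' should read $H_{\leq n+2}$ to get the isomorphism through degree $n+1$ of the cones, but the connectivity you actually establish is at least $n+2$, so the argument goes through.
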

\begin{proof}
If $A$ is a connected dga with augmentation $\epsilon\colon A \to \mathbb{Z}$ then the commutative square
\begin{equation*}
\begin{tikzcd}[column sep=5ex]
A \arrow[r,equals] & A \otimes_A^\mathbb{L} A \dar{\epsilon \otimes_A A} \rar{A \otimes_A \epsilon}& A \otimes_A^\mathbb{L} \mathbb{Z} \dar{\epsilon \otimes_A A} \arrow[r,equals] & \mathbb{Z}\\
\mathbb{Z} \arrow[r, equals] & \mathbb{Z} \otimes_A^\mathbb{L} A \rar{A \otimes_A \epsilon} & \mathbb{Z} \otimes_A^\mathbb{L} \mathbb{Z}
\end{tikzcd}
\end{equation*}
yields a map $\mathrm{st}(A) \to \mathbb{Z} \otimes_A^\mathbb{L} \mathbb{Z}$ from the homotopy pushout of $\mathbb{Z} \overset{\epsilon}\leftarrow A \overset{\epsilon}\rightarrow \mathbb{Z}$. Writing $\bar{A}$ for the homotopy fibre of $\epsilon\colon A \to \mathbb{Z}$, there is a canonical equivalence $\mathrm{st}(A) \simeq \mathbb{Z} \oplus \Sigma \bar{A}$. This gives a map $h \colon H_i(\bar{A}) \to H_{i+1}(\mathbb{Z} \otimes_A^\mathbb{L} \mathbb{Z})$,  functorially in the connected dga $A$.

If $f \colon A \to B$ is a map of connected dgas such that $H_i(B,A)=0$ for all $i<n$ then we claim that the induced square
\begin{equation*}
\begin{tikzcd}[column sep=3ex]
\mathrm{st}(A) \rar \dar & \mathrm{st}(B) \dar\\
\mathbb{Z} \otimes_A^\mathbb{L} \mathbb{Z} \rar & \mathbb{Z} \otimes_B^\mathbb{L} \mathbb{Z}
\end{tikzcd}
\end{equation*}
is $(n+2)$-cocartesian. To see this, we may assume up to homotopy equivalence that $A$ and $B$ are levelwise free $\mathbb{Z}$-modules and then calculate the derived tensor product using the normalised bar resolution \cite[Exercise 8.6.4]{Weibel}
$$\mathbb{Z} \overset{\sim}\longleftarrow \left[ A \leftarrow A \otimes \bar{A} \leftarrow A \otimes \bar{A}^{\otimes 2} \leftarrow A \otimes \bar{A}^{\otimes 3} \leftarrow \dots \right],$$
giving
$$\mathbb{Z} \otimes_A^\mathbb{L} \mathbb{Z} \overset{\sim}\longleftarrow \left[ \mathbb{Z} \leftarrow  \bar{A} \leftarrow  \bar{A}^{\otimes 2} \leftarrow \bar{A}^{\otimes 3} \leftarrow \dots \right].$$
We recognise the bottom portion $[\mathbb{Z} \overset{0}\leftarrow  \bar{A}]$ as $\mathrm{st}(A)$. This shows that the total homotopy cofibre of this square has a filtration with associated graded 
$$\bigoplus_{p \geq 2} \Sigma^p \mathrm{Cofib}(\bar{f}^{\otimes p}\colon\bar{A}^{\otimes p} \to \bar{B}^{\otimes p}).$$ 
If the homotopy cofibre of $f$ is $(n-1)$-connected then as $\bar{A}$ and $\bar{B}$ are 0-connected the homotopy cofibre of $\bar{f}^{\otimes p}$ is $n$-connected for all $p \geq 2$, so all of these terms are $(n+2)$-connected. The spectral sequence for this filtration then proves the claim.

The $(n+2)$-cocartesianness of the square implies that the induced map
$$h\colon  H_i(B,A) \cong H_i(\bar{B}, \bar{A}) \cong H_{i+1}(\mathrm{st}(B),\mathrm{st}(A)) \to H_{i+1}(\mathbb{Z} \otimes_B^\mathbb{L} \mathbb{Z}, \mathbb{Z} \otimes_A^\mathbb{L} \mathbb{Z})$$
 between horizontal homotopy cofibres is an isomorphism for $i+1 < n+2$ (and also an epimorphism for $i+1=n+2$), i.e.~for $i \leq n$.
\end{proof}

\begin{proof}[Proof of \cref{prop:Indec}]
Before starting the argument proper, we calculate the homology of $\mathbb{Z} \otimes^\mathbb{L}_{\CPL(2n)} \mathbb{Z}$. We do so using the resolution
$$\mathbb{Z} \overset{\sim}\longleftarrow [\derivedOutermost_0(2n) \leftarrow \derivedOutermost_1(2n) \leftarrow \derivedOutermost_2(2n) \leftarrow \cdots \leftarrow \derivedOutermost_n(2n)]$$
of left $\CPL(2n)$-modules provided by \cref{theorem:derived-outermost-cup-complex-resolution}.
 By Part (a) of that Theorem there are equivalences $\derivedOutermost_i(2n) \simeq \Sigma^{i,i} \CPL(2n)$ of left $\CPL(2n)$-modules, and therefore equivalences $\mathbb{Z} \otimes^\mathbb{L}_{\CPL(2n)}\derivedOutermost_i(2n) \simeq \Sigma^{i,i}\mathbb{Z}$. The resulting hyperhomology spectral sequence therefore collapses by sparsity, giving
$$H_{d,w}(\mathbb{Z} \otimes^\mathbb{L}_{\CPL(2n)} \mathbb{Z}) \cong \begin{cases}
    \mathbb{Z} & (d,w) = (2i, i)\text{ for } 0 \leq i \leq n\\
    0 & \text{else}.
\end{cases}$$
We can calculate the effect on homology of the map $\mathbb{Z} \otimes^\mathbb{L}_{\CPL(2n-2)} \mathbb{Z} \to \mathbb{Z} \otimes^\mathbb{L}_{\CPL(2n)} \mathbb{Z}$ induced by the hook map $h \colon \CPL(2n-2) \to \CPL(2n)$ using the map $\derivedOutermost_*(2n-2) \to \derivedOutermost_*(2n)$ between resolutions given by \cref{theorem:derived-outermost-cup-complex-resolution} (b). As the map $\derivedOutermost_i(2n-2) \to \derivedOutermost_i(2n)$ corresponds to $\Sigma^{i,i}h \colon \Sigma^{i,i} \CPL(2n-2) \to \Sigma^{i,i} \CPL(2n)$, the induced map
$$\Sigma^{i,i}\mathbb{Z} \simeq \mathbb{Z} \otimes^\mathbb{L}_{\CPL(2n-2)}\derivedOutermost_i(2n-2) \to \mathbb{Z} \otimes^\mathbb{L}_{\CPL(2n)}\derivedOutermost_i(2n) \simeq \Sigma^{i,i}\mathbb{Z}$$
is the identity. It follows that on homology $\mathbb{Z} \otimes^\mathbb{L}_{\CPL(2n-2)} \mathbb{Z} \to \mathbb{Z} \otimes^\mathbb{L}_{\CPL(2n)} \mathbb{Z}$ is an isomorphism on to everything except the $\mathbb{Z}$ in bidegree $(2n, n)$.

We make a further preliminary calculation with quasi-free dgas. If $V$ is a free graded $\mathbb{Z}$-module and $(T_\mathbb{Z}[V], d)$ is a dga whose underlying algebra is the tensor algebra on $V$, then we may calculate $\mathbb{Z} \otimes_{(T_\mathbb{Z}[V], d)}^\mathbb{L} \mathbb{Z}$ using the quasi-free Koszul resolution
$$\mathbb{Z} \overset{\sim}\longleftarrow \left[(T_\mathbb{Z}[V], d) \leftarrow (T_\mathbb{Z}[V] \otimes_\mathbb{Z} V, d) \right].$$
Here $T_\mathbb{Z}[V] \otimes_\mathbb{Z} V$ is the augmentation ideal of $T_\mathbb{Z}[V]$ with the induced differential. This yields $\mathbb{Z} \otimes_{(T_\mathbb{Z}[V], d)}^\mathbb{L} \mathbb{Z} \simeq [\mathbb{Z} \overset{0}\leftarrow (V, d_V)] \simeq \mathbb{Z} \oplus \Sigma (V, d_V)$, where $d_V$ is the differential on $V$ determined by $d(v) = d_V(v) + \text{decomposables} \in T_\mathbb{Z}[V]$. In particular, if the differential $d$ is such that $d(v)$ is decomposable for all $v \in V$ then $\mathbb{Z} \otimes_{(T_\mathbb{Z}[V], d)}^\mathbb{L} \mathbb{Z} \simeq \mathbb{Z} \oplus \Sigma V$.

We now begin the proof of \cref{prop:Indec} proper. The claim in the proposition for $n=0$ is clear: recall that we defined $\CPL(0) = \mathbb{Z}$ which has a model with no generators. For $n>0$, suppose for an induction that a model $(T_\mathbb{Z}[x_1, x_3, \ldots, x_{2n-3}], d) \overset{\sim}\to \CPL(2n-2)$ has been obtained, and consider the map of dgas 
$$\phi \colon (T_\mathbb{Z}[x_1, x_3, \ldots, x_{2n-3}], d) \overset{\sim}\to \CPL(2n-2) \to \CPL(2n).$$ 
We have seen that the cofibre of the induced map
$$\mathbb{Z} \otimes_{\phi}^\mathbb{L} \mathbb{Z}\colon\mathbb{Z} \otimes_{(T_\mathbb{Z}[x_1, x_3, \ldots, x_{2n-3}], d)}^\mathbb{L} \mathbb{Z} \overset{\sim}\to \mathbb{Z} \otimes_{\CPL(2n-2)}^\mathbb{L} \mathbb{Z} \to \mathbb{Z} \otimes_{\CPL(2n)}^\mathbb{L} \mathbb{Z}$$
has homology just $\mathbb{Z}$ supported in bidegree $(2n,n)$, so we may lift a generator along the Hurewicz map 
\begin{align*}
&h \colon H_{2n-1,n}(\CPL(2n), (T_\mathbb{Z}[x_1, x_3, \ldots, x_{2n-3}], d)) \\
&\quad\quad\quad\overset{\sim}\to H_{2n,n}(\mathbb{Z} \otimes_{\CPL(2n)}^\mathbb{L} \mathbb{Z}, \mathbb{Z} \otimes_{(T_\mathbb{Z}[x_1, x_3, \ldots, x_{2n-3}], d)}^\mathbb{L} \mathbb{Z})\cong \mathbb{Z},
\end{align*}
which is an isomorphism by the Hurewicz principle of \cref{lemma: hurewicz}, to a class $[x_{2n-1}]$. A cycle $x_{2n-1}$ representing this class is the data of a cycle 
$$\delta x_{2n-1} \in (T_\mathbb{Z}[x_1, x_3, \ldots, x_{2n-3}], d)$$
of bidegree $(2n-2, n)$ together with an element $x_{2n-1} \in L(2n)$ such that $d(x_{2n-1}) = \phi(\delta x_{2n-1})$. This yields an extension
$$\phi\colon(T_\mathbb{Z}[x_1, x_3, \ldots, x_{2n-3}], d) \to (T_\mathbb{Z}[x_1, x_3, \ldots, x_{2n-3}] * T_\mathbb{Z}[x_{2n-1}], d) \overset{\phi'}\to L(2n)$$
where $\phi'$ sends the symbol $x_{2n-1}$ to the element $x_{2n-1} \in L(2n)$, and the differential satisfies $d(x_{2n-1}) = \delta x_{2n-1}$. By construction the induced map 
$$\mathbb{Z} \otimes_{\phi'}^\mathbb{L} \mathbb{Z}\colon\mathbb{Z} \oplus \Sigma \mathbb{Z}\{x_1, x_3, \ldots, x_{2n-1}\} \to \mathbb{Z} \otimes_{\CPL(2n)}^\mathbb{L} \mathbb{Z}$$ 
is an equivalence, so by the Hurewicz principle the map $\phi'$ is an equivalence.
\end{proof}

\subsubsection{Calculating the differential}

\cref{prop:Indec} shows that $\CPL(2n)$ is equivalent to a dga with one generator in each degree $1,3,\ldots,2n-1$, and by construction this equivalence is natural with respect to the hook map. However we are yet to identify the differential on this dga explicitly. We wish to show that it may be described by the following model.

\begin{definition}
We define a differential bigraded algebra by
$$M(2n) := (T_\mathbb{Z}[x_1, x_3, \ldots, x_{2n-1}], d_\text{model})$$
with $|x_{2i-1}| = (2i-1, i)$ and differential of bidegree $(-1,0)$ given by
$$d_\text{model}(x_{2i-1}) = \sum_{\substack{j+k=i \\ j,k>0}} \binom{i}{j} x_{2j-1}x_{2k-1}.$$
\end{definition}

We leave the reader to verify that $d_\text{model} \circ d_\text{model} =0$ (or, as mentioned in the introduction, to notice that $M(2n)$ is the cobar construction of a truncated divided power coalgebra). Our next goal is to establish some features of this model which will be useful in proving that it is equivalent to $\CPL(2n)$.

The element
$$z_0 := \sum_{\substack{j+k=n+1 \\ j,k>0}} \binom{n+1}{j} x_{2j-1}x_{2k-1} \in M(2n)_{2n,n+1}$$
is a cycle, so represents a class in $H_{2n,n+1}(M(2n))$. This is the class that we attach $x_{2n+1}$ along to form the next model $M(2n+2)$.

There is an algebra map $M(2n-2) \to M(2n)$ induced by inclusion, and on underlying graded algebras this makes $M(2n)$ into a free right $M(2n-2)$-module. The derived tensor product $M(2n) \otimes^\mathbb{L}_{M(2n-2)} \mathbb{Z}$ may therefore be computed as the literal tensor product $M(2n) \otimes_{M(2n-2)} \mathbb{Z}$ with its induced differential, and in degrees $\sim 2n$ this has the form
$$\cdots \leftarrow \mathbb{Z}\{x_{2n-1}\} \overset{0}\leftarrow \mathbb{Z}\{x_1 x_{2n-1}\} \overset{0}\leftarrow \mathbb{Z}\{x_1^2x_{2n-1}\} \leftarrow \mathbb{Z}\{x_3x_{2n-1}, x_1^3 x_{2n-1}\} \leftarrow \cdots$$
so that $H_{2n,n+1}(M(2n) \otimes^\mathbb{L}_{M(2n-2)} \mathbb{Z}) \cong \mathbb{Z}\{x_1 x_{2n-1}\}$.

\begin{proposition}\label{prop:DetectingQuot}
The map
$$q_*: H_{2n, n+1}(M(2n)) \rightarrow H_{2n,n+1}(M(2n) \otimes^\mathbb{L}_{M(2n-2)} \mathbb{Z}) \cong \mathbb{Z}\{x_1 x_{2n-1}\}$$
is injective, and sends $z_0$ to $(n+1) x_1 x_{2n-1}$.
\end{proposition}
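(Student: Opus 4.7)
First I would note that a dimension count in the model pins down the relevant chain modules. Since $M(2n)$ is the tensor algebra on generators $x_{2i-1}$ of bidegree $(2i-1, i)$, any length-$\ell$ monomial $x_{i_1}\cdots x_{i_\ell}$ in bidegree $(d, w)$ satisfies $d + \ell = 2w$. For $(d, w) = (2n+1, n+1)$ this forces $\ell = 1$, but no generator of $M(2n)$ has odd degree $2n+1$, so the chain module vanishes and $H_{2n, n+1}(M(2n))$ coincides with the space of cycles in bidegree $(2n, n+1)$. The same count shows that the bidegree-$(2n, n+1)$ chain module is spanned by the $n$ monomials $x_{2j-1}x_{2k-1}$ with $j+k = n+1$, $j,k > 0$. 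Writing $M := M(2n-2)$, the quotient $M(2n) \otimes^{\mathbb{L}}_{M} \mathbb{Z} = M(2n)/M(2n)\cdot M^+$ has $\mathbb{Z}$-basis the monomials ending in $x_{2n-1}$ (together with the unit), so the same degree count identifies the only bidegree $(2n, n+1)$ chain as $x_1 x_{2n-1}$ and the only bidegree $(2n+1, n+1)$ chain as $x_1^2 x_{2n-1}$. A direct Leibniz computation using $d(x_1) = 0$ gives $d(x_1^2 x_{2n-1}) = x_1^2 d(x_{2n-1})$, and every summand in $d(x_{2n-1})$ ends in some $x_{2k-1}$ with $k < n$, so this vanishes in the quotient. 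Thus $H_{2n, n+1}$ of the quotient equals $\mathbb{Z}\{x_1 x_{2n-1}\}$, and $q_*$ is identified with the restriction of $q$ to the cycles.

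Next I would classify the cycles. Write a chain in bidegree $(2n, n+1)$ as $c = \sum_{j+k=n+1} c_{j,k}\, x_{2j-1}x_{2k-1}$ and expand $d(c)$ in the basis of length-$3$ monomials $x_{2a-1}x_{2b-1}x_{2c-1}$ with $a+b+c = n+1$ and $a,b,c > 0$. Reading off the coefficient of each such monomial, the cycle condition $d(c) = 0$ becomes
$$\binom{a+b}{a} c_{a+b, c} = \binom{b+c}{b} c_{a, b+c}$$
for all such $a, b, c$. Using the multinomial identity $\binom{n+1}{a+b}\binom{a+b}{a} = \binom{n+1}{a,b,c} = \binom{n+1}{a}\binom{b+c}{b}$ (valid since $a+b+c = n+1$), these relations are equivalent to requiring the function $j \mapsto c_{j, n+1-j}/\binom{n+1}{j}$ to be constant in $j$. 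Hence every rational cycle is a rational multiple $\lambda z_0$ of $z_0$, and the group of integer cycles is infinite cyclic.

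Finally I would conclude. The quotient map sends $x_{2j-1}x_{2k-1}$ to zero unless $k = n$, in which case it sends it to $x_1 x_{2n-1}$; hence $q(c) = c_{1,n}\, x_1 x_{2n-1}$. For $c = \lambda z_0$ this equals $\lambda(n+1)\, x_1 x_{2n-1}$, which vanishes only when $\lambda = 0$, so $q_*$ is injective. Setting $\lambda = 1$ yields $q_*(z_0) = (n+1)\, x_1 x_{2n-1}$, as required.

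The main obstacle is the classification of cycles in the middle step, which hinges on recognising the multinomial symmetry lurking in the derivation relations. One could alternatively try to appeal to the cobar-construction interpretation of $M(2n)$ and standard Koszul duality, but the direct combinatorial argument appears to be the most efficient.
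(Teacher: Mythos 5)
Your proposal is correct and takes essentially the same route as the paper's proof: use the word-length formula $\ell=2w-d$ to identify $M(2n)_{2n,n+1}$ and to rule out boundaries, classify the cycles via the coefficient relations coming from the Leibniz rule (your multinomial-identity packaging of the resulting proportionality is a slightly slicker way of solving the same linear system the paper solves by recursion), and evaluate $q$ on $z_0$. The only slip is harmless: $x_1^2x_{2n-1}$ has bidegree $(2n+1,n+2)$ rather than $(2n+1,n+1)$, so the quotient actually vanishes in bidegree $(2n+1,n+1)$ and your check that $d(x_1^2x_{2n-1})$ dies in the quotient is not even needed for the weight-$(n+1)$ computation.
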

\begin{proof}
We will prove this by identifying the domain. Note first that the word length of a monomial in $x_1, \dots , x_{2n-1}$ is encoded in its bigrading: if it has bidegree $(d,w)$ then it has word length $2w-d$.
This means that the bidegree $|z_0| = (2n,n+1)$ piece of $M(2n)$ is spanned by words of length 2. Checking bidegrees gives
$$M(2n)_{2n,n+1} = \mathbb{Z}\{x_1 x_{2n-1}, x_3 x_{2n-3}, \ldots x_{2n-3} x_3, x_{2n-1} x_1\}.$$

No nonzero class in this bidegree can be a boundary: if $u = d_\text{model}(v)$ with $|u| = (2n,n+1)$, then $|v|=(2n+1,n+1)$ so $v$ must have word length 1, i.e.~be a linear combination of the generators $x_1, \dots , x_{2n-1}$, but no $x_{2i-1}$ has this bidegree.

We claim that the submodule of cycles has rank precisely 1. If
$$z = \sum_{i=1}^{n} A_i x_{2i-1} x_{2n-(2i-1)}$$
is a cycle then applying $d_\text{model}$ gives
$$0 = \sum_{i=1}^{n} A_i \left(\sum_{\substack{j+k=i \\ j,k>0}} \binom{i}{j} x_{2j-1}x_{2k-1} x_{2n-(2i-1)} - x_{2i-1} \sum_{\substack{a+b=n-i+1 \\ a,b>0}} \binom{n-i+1}{a} x_{2a-1}x_{2b-1}\right).$$
The coefficient of $x_{2i-1} x_{2a-1} x_{2b-1}$ is $A_{i+a}\binom{i+a}{a} - A_i\binom{n-i+1}{a}$, which shows that we must have the proportionality
$$A_{1+a} = A_1 (1+a) \binom{n}{a} \text{ for } 1 \leq 1+a \leq n$$
between the coefficients $A_i$. Thus the cycles of bidegree $(2n,n)$ have rank at most 1. As $z_0$ is a non-zero cycle, they in fact have rank precisely 1. 

The map $q\colon M(2n) \to M(2n) \otimes^\mathbb{L}_{M(2n-2)} \mathbb{Z}$ sends the cycle $z_0$ to $(n+1) x_1 x_{2n-1} \neq 0 \in \mathbb{Z}\{x_1 x_{2n-1}\} \cong H_{2n,n+1}(M(2n) \otimes^\mathbb{L}_{M(2n-2)} \mathbb{Z})$, so induces an injection.
\end{proof}

\begin{remark}
The element $z_0$ does not generate $H_{2n, n+1}(M(2n))$, but is a multiple of the generator. It follows from the proof that the generator is obtained by dividing $z_0$ by the gcd of its coefficients, i.e.~$\gcd\{{n+1 \choose j} | 1 \leq j \leq n\}$. We will not need this.
\end{remark}

The remainder of this section is devoted to proving the following proposition.

\begin{proposition}\label{proposition: equivalence of dga with model} There is a weak equivalence 
$$M(2n) \overset{\sim}\to \CPL(2n)$$
of differential bigraded algebras, which sends $x_1$ to the class $\Phi$ of \cref{definition:Phi}. Furthermore, these equivalences are natural with respect to the hook map.
\end{proposition}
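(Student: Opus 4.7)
I argue by induction on $n$, combining the construction in the proof of Proposition~\ref{prop:Indec} with the detection provided by Proposition~\ref{prop:DetectingQuot}. For the base case $n=1$: since $a=0$, the element $\Phi \in \CPL(2)_{1,1}$ is a cycle, so sending $x_1 \mapsto \Phi$ defines a dga map $M(2) = T_{\mathbb{Z}}[x_1] \to \CPL(2)$; this is a quasi-isomorphism by Proposition~\ref{prop:Indec}, which in this degree says $\CPL(2)$ has a quasi-free model with a single generator in bidegree $(1,1)$.

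For the inductive step, assume a hook-compatible quasi-isomorphism $\phi_{n-1} \colon M(2n-2) \xrightarrow{\sim} \CPL(2n-2)$ with $\phi_{n-1}(x_1) = \Phi$. Applying the argument in the proof of Proposition~\ref{prop:Indec} to the composition $M(2n-2) \xrightarrow{\phi_{n-1}} \CPL(2n-2) \xrightarrow{h} \CPL(2n)$ produces a quasi-isomorphism $\Psi \colon M(2n-2) * T_{\mathbb{Z}}[y] \xrightarrow{\sim} \CPL(2n)$ with $|y|=(2n-1,n)$ and $d(y) = \delta y$ for some cycle $\delta y \in M(2n-2)_{2n-2,n}$; the pair $(\delta y, y)$ is only required to lift a generator of $H_{2n,n}(\mathbb{Z} \otimes^{\mathbb{L}}_{\CPL(2n)} \mathbb{Z}, \mathbb{Z} \otimes^{\mathbb{L}}_{M(2n-2)} \mathbb{Z}) \cong \mathbb{Z}$ under the Hurewicz isomorphism of Lemma~\ref{lemma: hurewicz}. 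Writing $\zeta_n := \sum_{j+k=n,\, j,k>0} \binom{n}{j}\, x_{2j-1} x_{2k-1} \in M(2n-2)_{2n-2,n}$, it suffices to show $[\delta y] = \pm [\zeta_n]$ in $H_{2n-2,n}(M(2n-2))$: granting this, the sign is absorbed into $y$, and modifying $y$ by an exact chain then arranges $d(y) = \zeta_n$ on the nose, so that $\Psi$ restricts to the sought $\phi_n \colon M(2n) \xrightarrow{\sim} \CPL(2n)$. Hook-compatibility is automatic because by construction $\phi_n|_{M(2n-2)} = h \circ \phi_{n-1}$.

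To identify $[\delta y]$ I invoke Proposition~\ref{prop:DetectingQuot} with $n$ replaced by $n-1$. Its proof shows that $H_{2n-2,n}(M(2n-2))$ is torsion-free of rank one (with no boundaries in this bidegree, since $M(2n-2)_{2n-1,n}=0$), and that the quotient map $q_* \colon H_{2n-2,n}(M(2n-2)) \hookrightarrow \mathbb{Z}\{x_1 x_{2n-3}\}$ induced by $M(2n-2) \to M(2n-2) \otimes_{M(2n-4)} \mathbb{Z}$ is injective with $q_*[\zeta_n] = n \cdot x_1 x_{2n-3}$. So the task reduces to computing $q_*[\delta y]$. Using naturality of the Hurewicz principle with respect to the two-step cofibration $\CPL(2n-4) \to \CPL(2n-2) \to \CPL(2n)$, transported through $\phi_{n-2}$ and $\phi_{n-1}$, this image is controlled by the generator of $H_{2n,n}$ of the corresponding two-step cofiber of $\mathbb{Z} \otimes^{\mathbb{L}}_{\cdot} \mathbb{Z}$-terms, which by Theorem~\ref{theorem:derived-outermost-cup-complex-resolution} (the right multiplication by $i\Phi$ in the resolution $\derivedOutermost_*$ becomes zero on $\mathbb{Z} \otimes^{\mathbb{L}}_{\CPL(2n)} \mathbb{Z}$) collapses to a single copy of $\mathbb{Z}$. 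Unpacking this generator via the explicit multiplicative structure of the resolution then yields $q_*[\delta y] = \pm n \cdot x_1 x_{2n-3}$, completing the identification.

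The main obstacle is exactly this last coefficient-tracking: confirming that $q_*[\delta y]$ equals $\pm n \cdot x_1 x_{2n-3}$ and not some other integer multiple requires a careful unpacking of the Hurewicz isomorphism combined with the multiplicative data appearing in the two-step cofiber (equivalently, the composition pairing furnished by right multiplication by $\Phi$ in the derived resolution $\derivedOutermost_*$). Everything else in the inductive step — existence of the extension, the rank-one structure of the target group, and compatibility with the hook — is either formal or an immediate application of Propositions~\ref{prop:Indec} and \ref{prop:DetectingQuot}.
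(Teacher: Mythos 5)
Your overall architecture --- induction on $n$, attaching a single cell via \cref{prop:Indec} and the Hurewicz principle of \cref{lemma: hurewicz}, then detecting the attaching class through the injection $q_*$ of \cref{prop:DetectingQuot} --- is exactly the paper's strategy, and your reduction of the inductive step to the single identity $q_*[\delta y]=\pm n\,x_1x_{2n-3}$ is the correct reduction. But that identity is the whole content of the proof, and you do not establish it: you yourself flag it as ``the main obstacle'' and describe it only as requiring ``a careful unpacking''. This matters because $\zeta_n$ is \emph{not} a generator of the rank-one group $H_{2n-2,n}(M(2n-2))$ (the generator is $\zeta_n$ divided by the gcd of its binomial coefficients), so without pinning down the integer one only learns that $[\delta y]$ is \emph{some} multiple of the generator, which is not enough to identify the differential.

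Moreover, the route you gesture at cannot produce the coefficient. On the two-sided derived tensor products $\mathbb{Z}\otimes^{\mathbb{L}}_{\CPL(2m)}\mathbb{Z}$, right multiplication by $i\Phi$ in the resolution $\derivedOutermost_*$ becomes zero --- as you note --- which is precisely why those groups are free of rank one and retain no memory of the integer $i$; a ``two-step cofiber of $\mathbb{Z}\otimes^{\mathbb{L}}_{\cdot}\mathbb{Z}$-terms'' therefore cannot see $n$. What the paper does instead is compute the \emph{one-sided} base change
$$H_{2n,n+1}\bigl(\CPL(2n+2)\otimes^{\mathbb{L}}_{\CPL(2n-2)}\mathbb{Z}\bigr)\cong\mathbb{Z}/(n+1)$$
(\cref{lem:CalcDivisbility}): truncating the acyclic resolution $\derivedOutermost_*(2n+2)$ reduces this to the cone of $-\cdot(n+1)\Phi\colon\Sigma^{2n,n+1}\CPL(2n+2)\to\Sigma^{2n-1,n}\CPL(2n+2)$, and \cref{cor:PhiGenerates} identifies the relevant map as $\mathbb{Z}\{1\}\xrightarrow{\,n+1\,}\mathbb{Z}\{\Phi\}$, with cokernel $\mathbb{Z}/(n+1)$. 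Feeding this order into the map of long exact sequences for $\CPL(2n)\to\CPL(2n+2)\to\CPL(2n+2)/\CPL(2n)$ after applying $-\otimes^{\mathbb{L}}_{\CPL(2n-2)}\mathbb{Z}$ is what forces $q_*[d(x_{2n+1})]=\pm(n+1)x_1x_{2n-1}$ (in your indexing, that $H_{2n-2,n}(\CPL(2n)\otimes^{\mathbb{L}}_{\CPL(2n-4)}\mathbb{Z})$ has order $n$). Some computation of this kind is indispensable and is absent from your argument. Two smaller points: your two-stage induction implicitly uses models for both $M(2n-4)$ and $M(2n-2)$, so you need the cases $2n=2$ \emph{and} $2n=4$ as base cases (the latter is where the paper first extracts $d(x_3)=\pm2x_1^2$ directly from the resolution); and \cref{cor:PhiGenerates}, which your coefficient computation would also need, itself rests on the $2n=2$ normalisation $x_1\mapsto\Phi$.
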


We will show that in the model for $\CPL(2n)$ provided by \cref{prop:Indec} we may choose the generators $x_i$ so that the differential $d$ is given by $d_\text{model}$. We will do so by strong induction on $n$. To start the induction, we do some early cases by hand.

\begin{example}[$2n=2$]
\label{exampe:model 2n=2}
There is a model of the form $(T_\mathbb{Z}[x_1], d) \simeq \CPL(2)$ provided by \cref{prop:Indec}. The differential is completely determined by $d(x_1)$, but $H_{0, *}(\CPL(2))=\mathbb{Z}$ so we must have $d(x_1)=0$. We normalise so that $x_1$ maps to $\Phi$, so in particular $[x_1] = \Phi \in H_{1,*}(\CPL(2))$.
\end{example}

\begin{corollary}\label{cor:PhiGenerates}
    For all $2n \geq 2$ we have $H_{1,*}(L(2n)) \cong \mathbb{Z}\{\Phi\}$.
\end{corollary}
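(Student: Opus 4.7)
The plan is to split the proof into two parts: first show $H_{1,*}(\CPL(2n;\mathbb{Z},0)) \cong \mathbb{Z}$ from the model of \cref{prop:Indec}, then identify $[\Phi]$ as a generator via the Bockstein construction recalled in \cref{sec:ChangingCoeff}.

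For the first part, fix a quasi-equivalence $\phi \colon M(2n) = (T_\mathbb{Z}[x_1, x_3, \ldots, x_{2n-1}], d) \xrightarrow{\sim} \CPL(2n)$ as in \cref{prop:Indec}, with $|x_{2i-1}|=(2i-1,i)$. Since every generator has odd homological degree at least $1$, the only monomial of total homological degree $1$ is $x_1$ itself, sitting in bidegree $(1,1)$; likewise, the only monomial of degree $2$ at weight $2$ is $x_1^2$. The image $d(x_1)$ must lie in bidegree $(0,1)$, but $M(2n)_0 = \mathbb{Z}$ is concentrated in bidegree $(0,0)$, so $d(x_1)=0$, and then $d(x_1^2)=0$ by the Leibniz rule. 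Consequently $H_{1,*}(M(2n)) = \mathbb{Z}\{[x_1]\}$, and the quasi-equivalence $\phi$ gives $H_{1,*}(\CPL(2n)) \cong \mathbb{Z}$.

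For the second part, work in the universal case $(\mathbb{Z}[a], a)$, in which $a$ is a non-zerodivisor. By \cref{lemma: phiL - phiR'}, the tautological lift of $\Phi$ satisfies $d(\Phi)= a\cdot\emptyset$, so the Bockstein construction of \cref{sec:ChangingCoeff} yields
$$\beta_a \colon H_1(\CPL(2n;\mathbb{Z},0)) \to H_0(\CPL(2n;\mathbb{Z},0)) = \mathbb{Z}$$
with $\beta_a([\Phi]) = [\emptyset]$, a generator of $H_0$. Since $\beta_a$ is $\mathbb{Z}$-linear and $H_1 \cong \mathbb{Z}$, the class $[\Phi]$ itself must generate $H_1$.

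The main obstacle is the generator identification. The model computation alone only shows $H_1 \cong \mathbb{Z}$, and naturality of the model under the hook map would only pin the canonical generator down as $h^{n-1}(\Phi_2)$, which is not literally $\Phi_{2n}$ (the diagrammatic formula for $\Phi$ depends on $2n \bmod 4$, so the two differ as basis elements). The Bockstein argument circumvents this by certifying $[\Phi]$ as primitive without requiring an explicit diagrammatic comparison with $\phi(x_1)$.
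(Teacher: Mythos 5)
Your proof is correct, and its first half is essentially the paper's own argument: both read off $H_{1,*}(\CPL(2n))\cong\mathbb{Z}$ from the bidegrees of the generators supplied by \cref{prop:Indec} (the paper phrases this as ``cells of homological degree $\geq 3$ cannot change $H_1$'', starting from the base case \cref{exampe:model 2n=2}). Where you genuinely diverge is in identifying the generator. The paper relies on the normalisation $x_1\mapsto\Phi$ for $2n=2$ and the naturality of the model under the hook map, which strictly speaking exhibits the generator as $h^{n-1}(\Phi_2)$; as you correctly point out, this is not the same basis diagram as $\Phi_{2n}$ (already $h(\Phi_2)$ has two disjoint cups where $(\Phi_4)_l$ has a nested pair), so the paper leaves implicit the identification of the two homology classes. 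Your Bockstein argument supplies exactly this missing identification: by \cref{lemma: phiL - phiR'} the lift of $\Phi$ to the universal case satisfies $d(\Phi)=a\cdot\emptyset$, so $\beta_a$ from \cref{sec:ChangingCoeff} carries $[\Phi]$ to the generator $[\emptyset]$ of $H_0\cong\mathbb{Z}$, and an element of $H_1\cong\mathbb{Z}$ hitting a generator under a $\mathbb{Z}$-linear map to $\mathbb{Z}$ must itself generate (the same computation applies to $h^{n-1}(\Phi_2)$, so the two classes coincide). Your route is marginally longer but more careful about which diagram represents the generator, which is the form in which the corollary is actually used later, e.g.\ in \cref{lem:CalcDivisbility} where $\Phi$ appears as the specific element occurring in the resolution of \cref{theorem:derived-outermost-cup-complex-resolution}.
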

\begin{proof}
The previous Example establishes this for $2n=2$, and by \cref{prop:Indec} a model for $L(2n)$ may be obtained from a model for $L(2)$ by attaching dga cells of homological degrees $\geq 3$. These can not change the first homology.
\end{proof}

\begin{example}[$2n=4$]
There is a model $(T_\mathbb{Z}[x_1, x_3], d) \simeq \CPL(4)$ provided by \cref{prop:Indec}, which may be obtained from the model $M(2)$ for $\CPL(2)$ by attaching a cell $x_3$. Thus we must determine $d(x_3) \in H_{2,1}(M(2)) = \mathbb{Z}\{x_1^2\}$.

By \cref{theorem:derived-outermost-cup-complex-resolution} we have the resolution $R \leftarrow \CPL(4) \leftarrow \derivedOutermost_1(4) \leftarrow \derivedOutermost_2(4)$ of $R$ by left $\CPL(4)$-modules. Under the equivalences $\derivedOutermost_1(4) \simeq \Sigma^{1,1} \CPL(4)$ and $\derivedOutermost_2(4) \simeq \Sigma^{2,2}\CPL(4)$ the middle map is identified with $- \cdot \Phi$ and the right-hand map is identified with $- \cdot 2\Phi$. In particular as the composite of these maps is null it follows that $2\Phi^2 = 0 \in H_{*,*}(\CPL(4))$, but by exactness we also know that $\Phi^2 \neq 0$. From this it follows that we must have $d(x_3) = \pm 2 x_1^2$. After perhaps changing the definition of $x_3$ by a sign, we have identified the model with $M(4)$.
\end{example}

Before we can prove \cref{proposition: equivalence of dga with model} we need an additional technical lemma.

\begin{lemma}\label{lem:CalcDivisbility}
We have
$$H_{2n,n+1}(\CPL(2n+2) \otimes^\mathbb{L}_{\CPL(2n-2)} \mathbb{Z}) \cong \mathbb{Z}/(n+1).$$
\end{lemma}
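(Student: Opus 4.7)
The plan is to compute $H_{2n,n+1}(\CPL(2n+2)\otimes^\mathbb{L}_{\CPL(2n-2)}\mathbb{Z})$ by resolving $\mathbb{Z}$ as a $\CPL(2n-2)$-module using $\derivedOutermost_*(2n-2)$ from \cref{theorem:derived-outermost-cup-complex-resolution}, and then exhibiting the result as a truncation of the resolution $\derivedOutermost_*(2n+2)$ of $\mathbb{Z}$ as a $\CPL(2n+2)$-module. Concretely, since each $\derivedOutermost_i(2n-2)$ is free over $\CPL(2n-2)$,
$$\CPL(2n+2)\otimes^\mathbb{L}_{\CPL(2n-2)}\mathbb{Z} \;\simeq\; \CPL(2n+2)\otimes_{\CPL(2n-2)}\derivedOutermost_*(2n-2),$$
and \cref{theorem:derived-outermost-cup-complex-resolution}\ref{ThmPart1}--\ref{ThmPart2} (applied to the double hook $h^2\colon\CPL(2n-2)\to\CPL(2n+2)$), together with the fact that $h$ is a dga map sending $\Phi$ to $\Phi$ in homology (\cref{lemma:hook-map} and \cref{cor:PhiGenerates}), identify this, up to chain homotopy, with the chain complex
$$\bigl[\CPL(2n+2) \xleftarrow{\,\cdot\,\Phi} \Sigma^{1,1}\CPL(2n+2) \xleftarrow{\,\cdot\,2\Phi} \cdots \xleftarrow{\,\cdot(n-1)\Phi} \Sigma^{n-1,n-1}\CPL(2n+2)\bigr],$$
which is precisely the truncation $\derivedOutermost_{\leq n-1}(2n+2)$ of the full resolution $\derivedOutermost_*(2n+2)$.

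Next I would compare with the full resolution of $\mathbb{Z}$ over $\CPL(2n+2)$ via the short exact sequence of chain complexes of left $\CPL(2n+2)$-modules
$$0 \to \derivedOutermost_{\leq n-1}(2n+2) \to \derivedOutermost_*(2n+2) \to \derivedOutermost_{[n,n+1]}(2n+2) \to 0,$$
where $\derivedOutermost_{[n,n+1]}(2n+2) = \bigl[\Sigma^{n,n}\CPL(2n+2) \xleftarrow{\,\cdot(n+1)\Phi} \Sigma^{n+1,n+1}\CPL(2n+2)\bigr]$. Since $\Tot\derivedOutermost_*(2n+2)\simeq\mathbb{Z}$ is concentrated in bidegree $(0,0)$, the associated long exact sequence collapses in positive degrees and yields
$$H_{2n,n+1}\bigl(\derivedOutermost_{\leq n-1}(2n+2)\bigr) \;\cong\; H_{2n+1,n+1}\bigl(\derivedOutermost_{[n,n+1]}(2n+2)\bigr).$$

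Finally I would compute the right-hand side directly from the two-term total complex. Tracking bidegrees carefully, the relevant piece of the totalisation at total bidegree $(2n+1,n+1)$ receives contributions only from $\Sigma^{n,n}\CPL(2n+2)$, identified with $\CPL(2n+2)_{1,1}$; one below (at total bidegree $(2n,n+1)$) the totalisation vanishes because $\CPL(2n+2)_{0,1}=0$; and one above (at $(2n+2,n+1)$) the totalisation is $\CPL(2n+2)_{2,1}\oplus\CPL(2n+2)_{0,0}$, whose differential lands in $\CPL(2n+2)_{1,1}$ via the internal differential $d$ on the first summand and via $(n+1)\Phi\cdot(-)$ on the second. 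Passing to homology yields
$$H_{1,1}\bigl(\CPL(2n+2)\bigr)\,/\,(n+1)\cdot\Phi \;=\; \mathbb{Z}\{\Phi\}/(n+1)\Phi \;=\; \mathbb{Z}/(n+1),$$
where we have used \cref{cor:PhiGenerates} (and the fact that $\Phi$ has weight $1$) to identify $H_{1,1}(\CPL(2n+2))=\mathbb{Z}\{\Phi\}$.

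The main obstacle I anticipate is bookkeeping: one must identify the image of $\derivedOutermost_*(2n-2)$ under the double-hook-tensored resolution with the genuine truncation $\derivedOutermost_{\leq n-1}(2n+2)$, which relies crucially on the naturality statement of \cref{theorem:derived-outermost-cup-complex-resolution}\ref{ThmPart2} and on the fact that the double hook sends $\Phi_{(2n-2)}$ to a representative of $\Phi_{(2n+2)}$. Once this identification is in hand, the short exact sequence and the two-term homology calculation are routine.
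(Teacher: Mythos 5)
Your proposal is correct and follows essentially the same route as the paper's proof: base-change the resolution $\derivedOutermost_*(2n-2)$ along the (double) hook map to identify $\CPL(2n+2)\otimes^{\mathbb{L}}_{\CPL(2n-2)}\mathbb{Z}$ with the truncation $[\derivedOutermost_0(2n+2)\leftarrow\cdots\leftarrow\derivedOutermost_{n-1}(2n+2)]$, trade this via acyclicity of the full resolution for the complementary two-term piece $[\derivedOutermost_n(2n+2)\leftarrow\derivedOutermost_{n+1}(2n+2)]$, and compute the latter as the cone of $\cdot(n+1)\Phi$ using $H_{0,0}=\mathbb{Z}$ and $H_{1,1}=\mathbb{Z}\{\Phi\}$ from \cref{cor:PhiGenerates}. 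The only (immaterial) difference is that you package the comparison step as a short exact sequence of complexes and its long exact sequence, where the paper augments and desuspends.
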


\begin{proof}
Using the resolution
$$\mathbb{Z} \overset{\sim}\longleftarrow [\derivedOutermost_0(2n-2) \leftarrow \cdots \leftarrow \derivedOutermost_{n-1}(2n-2)]$$
of $\mathbb{Z}$ as a left $\CPL(2n-2)$-module, and the fact that the canonical map 
$$\CPL(2n+2) \otimes^\mathbb{L}_{\CPL(2n-2)} \derivedOutermost_i(2n-2) \to \derivedOutermost_i(2n+2)$$
is an equivalence (using \cref{theorem:derived-outermost-cup-complex-resolution} twice), we see that $\CPL(2n+2) \otimes^\mathbb{L}_{\CPL(2n-2)} \mathbb{Z}$ may be described as
$$C' := \left[ \derivedOutermost_0(2n+2) \leftarrow \cdots \leftarrow \derivedOutermost_{n-1}(2n+2) \right].$$
We may replace this by
$$C'' := \Sigma^{-1}[ \mathbb{Z} \overset{\epsilon}\leftarrow \derivedOutermost_0(2n+2) \leftarrow \cdots \leftarrow \derivedOutermost_{n-1}(2n+2) ]$$
without changing its homology in bidegree $(2n, n+1)$. This is the (desuspension of the) initial portion of the resolution
$$\mathbb{Z} \overset{\epsilon}\leftarrow \derivedOutermost_0(2n+2) \leftarrow \cdots \leftarrow \derivedOutermost_{n+1}(2n+2),$$
which is acyclic, so $C''$ is equivalent to 
$$ C := \Sigma^{n-1} \left[\derivedOutermost_{n}(2n+2)\leftarrow \derivedOutermost_{n+1}(2n+2) \right].$$
By \cref{theorem:derived-outermost-cup-complex-resolution}, under the identifications $\derivedOutermost_n(2n+2) \simeq \Sigma^{n,n} \CPL(2n+2)$ and $\derivedOutermost_{n+1}(2n+2) \simeq \Sigma^{n+1,n+1} \CPL(2n+2)$, the $\CPL(2n+2)$-module map $\derivedOutermost_{n+1}(2n+2) \to \derivedOutermost_{n}(2n+2)$ is identified with right multiplication by $(n+1)\Phi$, so $C$ is equivalent to the mapping cone of
$$- \cdot (n+1)\Phi  \colon\Sigma^{2n, n+1} \CPL(2n+2) \to  \Sigma^{2n-1, n} \CPL(2n+2).$$
Using \cref{cor:PhiGenerates} we therefore have an exact sequence
\begin{equation*}
    \begin{tikzcd}
H_{0,0}( \CPL(2n+2)) \rar{- \cdot (n+1)\Phi} \arrow[d, equals] & H_{1,1}(\CPL(2n+2)) \rar \arrow[d, equals] & H_{2n,n+1}(C) \rar & 0\\
\mathbb{Z}\{1\} & \mathbb{Z}\{\Phi\}
    \end{tikzcd}
\end{equation*}
giving $H_{2n,n+1}(\CPL(2n+2) \otimes^\mathbb{L}_{\CPL(2n-2)} \mathbb{Z}) \cong H_{2n,n+1}(C) \cong \mathbb{Z}/(n+1)$ as claimed.
\end{proof}

\begin{proof}[Proof of \cref{proposition: equivalence of dga with model}]
Suppose for an induction that we have models
$$M(2n-2) \overset{\sim}\to \CPL(2n-2) \text{ and } M(2n) \overset{\sim}\to \CPL(2n)$$
which are natural with respect to the hook map. We know from \cref{prop:Indec} that $\CPL(2n+2)$ has a model obtained from $M(2n)$ by attaching a single cell $x_{2(n+1)-1}$ along some element
$$[d(x_{2(n+1)-1})] \in H_{2n, n+1}(M(2n)).$$
We must show that this agrees with $[d_\text{model}(x_{2(n+1)-1})] = [z_0]$ (up to a sign, as we may rechoose the cell to be $-x_{2(n+1)-1}$).

By \cref{prop:DetectingQuot} the map
$$q_*\colon H_{2n, n+1}(M(2n)) \to H_{2n, n+1}(M(2n) \otimes^\mathbb{L}_{M(2n-2)} \mathbb{Z}) \cong \mathbb{Z}\{x_1 x_{2n-1}\}$$
is injective, so it suffices to show that $[d(x_{2(n+1)-1})]$ and $z_0$ agree (up to a sign) under this map. Using \cref{prop:DetectingQuot} the class $z_0$ maps to $(n+1) x_1 x_{2n-1}$, so we must show that the same is true (up to a sign) of $[d(x_{2(n+1)-1})]$.

As a model for $\CPL(2n+2)$ is obtained from $M(2n) \simeq \CPL(2n)$ by attaching a single cell $x_{2(n+1)-1}$ of degree $2n+1$ and weight $n+1$, we have
\begin{equation}\label{eq:HomologyOfF}
    H_{d,*}(\CPL(2n+2)/\CPL(2n)) = \begin{cases}
 0 & d < 2n+1 \\
 \mathbb{Z}\{x_{2(n+1)-1}\} & d=2n+1,
\end{cases}
\end{equation}
where this generator has weight $n+1$, since the hook map is injective by \cref{lemma:hook-map}. The map
\begin{equation*}
\begin{tikzcd}[column sep=3ex]
\CPL(2n) \rar \dar{q} & \CPL(2n+2) \dar \rar & \CPL(2n+2)/\CPL(2n) \dar\\
 \CPL(2n) \otimes^\mathbb{L}_{\CPL(2n-2)} \mathbb{Z} \rar & \CPL(2n+2) \otimes^\mathbb{L}_{\CPL(2n-2)} \mathbb{Z} \rar & (\CPL(2n+2)/\CPL(2n)) \otimes^\mathbb{L}_{\CPL(2n-2)} \mathbb{Z} 
\end{tikzcd}
\end{equation*}
of homotopy cofibre sequences (short exact sequences of chain complexes are always homotopy cofibre sequences) yields a map of long exact sequences, a portion of which is
\begin{equation*}
\begin{tikzcd}[column sep=1.5ex]
\mathbb{Z}\{x_{2(n+1)-1}\} \rar{\sim} \dar{d} & H_{2n+1,n+1}((\CPL(2n+2)/\CPL(2n)) \otimes^\mathbb{L}_{\CPL(2n-2)} \mathbb{Z}) \dar{\partial}\\
H_{2n,n+1}(\CPL(2n)) \arrow[r, hook, "q_*"] \dar & H_{2n,n+1}(\CPL(2n) \otimes^\mathbb{L}_{\CPL(2n-2)} \mathbb{Z}) \arrow[r,equals] \dar & \mathbb{Z}\{x_1x_{2n-1}\}\\
H_{2n,n+1}(\CPL(2n+2))  \rar \dar & H_{2n,n+1}(\CPL(2n+2) \otimes^\mathbb{L}_{\CPL(2n-2)} \mathbb{Z}) \dar\\
0 \rar & 0
\end{tikzcd}
\end{equation*}

To show that $[d(x_{2(n+1)-1})]$ and $z_0$ agree up to a sign is therefore equivalent to showing that the group $H_{2n,n+1}(\CPL(2n+2) \otimes^\mathbb{L}_{\CPL(2n-2)} \mathbb{Z})$ has order $n+1$. This is true by \cref{lem:CalcDivisbility}.
\end{proof}

\subsection{The argument for $(\mathbb{Z}[a], a)$}
By \cref{proposition: equivalence of dga with model} we have a model $M(2n)$ for $L(2n;\Z,0)$. In this section we extend the model to build a model for the universal case $(\mathbb{Z}[a],a)$, in which $a$ has grading 1. We write $\CPL'(2n) := \CPL(2n; \mathbb{Z}[a], a)$, and let
$$M'(2n) := (T_{\mathbb{Z}[a]}[x'_1, x'_3, \ldots, x'_{2n-1}], d'_\text{model})$$
with $|x'_{2i-1}| = (2i-1, i)$ and differential given by $d'_\text{model}(x'_1)=a$
and
$$d'_\text{model}(x'_{2i-1}) = \sum_{\substack{j+k=i \\ j,k>0}} \binom{i}{j} x'_{2j-1}x'_{2k-1} \text{ for } 2i-1 > 1.$$
We again leave the reader to verify that $d'_\text{model} \circ d'_\text{model} =0$.

The goal of this section is then to establish the following universal case of the model, from which the general case will be deduced.

\begin{proposition}\label{prop: model universal case} There is a weak equivalence
$M'(2n) \overset{\sim}\to \CPL'(2n)$ of differential graded $\mathbb{Z}[a]$-algebras, which sends $x_1$ to $\Phi$.
\end{proposition}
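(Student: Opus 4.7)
The plan is to lift the weak equivalence $\phi\colon M(2n) \overset{\sim}\to L(2n;\mathbb{Z},0)$ from \cref{proposition: equivalence of dga with model} to a dga map $\phi'\colon M'(2n) \to L'(2n)$, and then to deduce via the Bockstein short exact sequences of \cref{sec:ChangingCoeff} that $\phi'$ is itself a weak equivalence. The underlying structural observation is that $M'(2n)/a = M(2n)$ and $L'(2n)/a = L(2n;\mathbb{Z},0)$, while $a = d'_\text{model}(x_1') \in M'(2n)$ and $a = d(\Phi) \in L'(2n)$ are boundaries and hence act null-homotopically, so by \cref{sec:ChangingCoeff} the mod-$a$ reduction induces a short exact sequence
\begin{equation*}
0 \to H_{*,*}(M'(2n)) \overset{q}{\hookrightarrow} H_{*,*}(M(2n)) \overset{\partial}{\to} H_{*-1,*}(M'(2n)) \to 0
\end{equation*}
and analogously for $L'$, exhibiting both homologies as kernels of Bockstein operators on the $(\mathbb{Z},0)$-side.

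I would construct $\phi'$ inductively on the generators $x_{2i-1}'$, starting with $\phi'(x_1') := \Phi$ (compatible with differentials since $d(\Phi) = a = d'_\text{model}(x_1')$), while maintaining the inductive invariant that the mod-$a$ reduction $\bar\phi'$ agrees with $\phi$ on the generators defined so far. Given $\phi'$ on $x_1', \ldots, x_{2i-3}'$, the element
\begin{equation*}
z_i := \sum_{\substack{j+k=i \\ j,k>0}} \binom{i}{j}\phi'(x_{2j-1}')\phi'(x_{2k-1}') \in L'(2n)
\end{equation*}
is a cycle of bidegree $(2i-2,i)$, being $\phi'(d'_\text{model}(x_{2i-1}'))$, and one needs a primitive. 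Under the invariant $\bar z_i = \phi(d_\text{model}(x_{2i-1})) = d(\phi(x_{2i-1}))$ is a boundary in $L(2n;\mathbb{Z},0)$, so Bockstein injectivity forces $z_i$ to be a boundary in $L'(2n)$. To preserve the invariant one takes any lift $\tilde y \in L'(2n)$ of $\phi(x_{2i-1})$, observes that the resulting defect $d(\tilde y) - z_i$ is divisible by $a$, say $aw$ for a cycle $w$, and corrects $\tilde y$ by a term $av$ with $d(v) = -w$; the existence of $v$ is a secondary obstruction whose mod-$a$ reduction $[\bar w]$ is killed by another application of Bockstein injectivity together with the dga-map property of $\phi$.

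Having constructed $\phi'$, I would apply the $5$-lemma to the commutative ladder of Bockstein short exact sequences induced by $\phi'$:
\begin{equation*}
\begin{tikzcd}
0 \arrow[r] & H_i(M'(2n)) \arrow[d, "\phi'_*"] \arrow[r] & H_i(M(2n)) \arrow[d, "\phi_*", "\sim"'] \arrow[r] & H_{i-1}(M'(2n)) \arrow[d, "\phi'_*"] \arrow[r] & 0 \\
0 \arrow[r] & H_i(L'(2n)) \arrow[r] & H_i(L(2n;\mathbb{Z},0)) \arrow[r] & H_{i-1}(L'(2n)) \arrow[r] & 0.
\end{tikzcd}
\end{equation*}
The middle vertical is an isomorphism by \cref{proposition: equivalence of dga with model}, and strong induction on $i$ (with base case $H_0(M'(2n)) = \mathbb{Z}[a]/(a) = \mathbb{Z} = H_0(L'(2n))$, appealing to \cref{cor:MainCalculations}(a) for the latter) gives that $\phi'_*$ is an isomorphism in every degree.

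The hardest step will be the bookkeeping in the simultaneous inductive construction maintaining the invariant $\bar\phi' = \phi$, specifically the verification that the secondary obstruction class $[\bar w]$ is a boundary in $L(2n;\mathbb{Z},0)$ at every stage. This should reduce to an $a$-adic comparison between $d(\phi(x_{2i-1}))$ and $z_i$ computed modulo $a^2$, exploiting that $\phi$ is already known to be a dga map into the $(\mathbb{Z},0)$-reduction; once this technical point is handled, the rest of the argument is a formal consequence of the Bockstein formalism.
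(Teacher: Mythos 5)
Your route is genuinely different from the paper's: rather than constructing a comparison map, the paper re-runs the cell-attachment argument over $\mathbb{Z}[a]$ (\cref{lemma: single cell universal case}, via a Nakayama-type observation that a levelwise free, finitely generated graded $\mathbb{Z}[a]$-complex is acyclic once its reduction mod $a$ is) and then pins down the differential $d'$ exactly. You instead take the already-defined $M'(2n)$, build a dga map $\phi'$ into $\CPL'(2n)$ generator by generator, and propagate the equivalence from $(\mathbb{Z},0)$ through the Bockstein ladder and the five lemma. The skeleton is sound: the primary obstruction $[z_i]$ dies by injectivity of $q$ exactly as you say, and the concluding ladder argument is correct \emph{provided} the invariant $\bar{\phi}'=\phi$ is actually maintained.

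The gap is the secondary obstruction. You need $[w]=0$, where $aw=d(\tilde y)-z_i$, and Bockstein injectivity only reduces this to $[\bar w]=0$ in $H_{2i-2}(\CPL(2n;\mathbb{Z},0))$ --- a group that is far from zero in general. The ``dga-map property of $\phi$'' cannot kill it: that property says precisely that $d(\tilde y)-z_i\equiv 0 \bmod a$, i.e.\ it produces $w$ but gives no control over $w$ modulo $a$, which is a mod-$a^2$ statement. What actually kills the obstruction is the weight grading of \cref{def: weight}: choosing the lift $\tilde y$ weight-homogeneously of weight $i$, the element $w$ has bidegree $(2i-2,i-1)$, and $H_{2i-2,i-1}(\CPL(2n;\mathbb{Z},0))\cong H_{2i-2,i-1}(M(2n))=0$ for $i\geq 2$, because that bidegree corresponds to word length $2(i-1)-(2i-2)=0$ in the tensor algebra. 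This is the same sparsity argument the paper uses to show its correction term $\epsilon_i$ vanishes. Without it your induction cannot maintain $\bar{\phi}'=\phi$, and then the middle vertical in your five-lemma ladder is no longer known to be an isomorphism. So the proof is repairable, but the decisive input is the bigrading, not the Bockstein formalism.
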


This universal case will be deduced in turn from the case $(\mathbb{Z},0)$.

\begin{lemma} \label{lem: times a is zero on homology}
The map $a \cdot -\colon\Sigma^{0,1} \CPL'(2n) \to \CPL'(2n)$ is chain nullhomotopic for $2n \geq 2$; in particular multiplication by $a$ is zero on $H_{*,*}(\CPL'(2n))$. Similarly, multiplication by $a$ is zero on $H_{*,*}(\CPL'(2n+2), \CPL'(2n))$.
\end{lemma}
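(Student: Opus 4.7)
The plan is to exhibit explicit chain nullhomotopies coming from the fact that $a$ is a boundary in $L'(2n)$. For the first claim, recall from \cref{definition:Phi} and \cref{lemma: phiL - phiR'} that the element $\Phi = \Phi_l \otimes \Phi_r \in L'(2n)_1$ satisfies $d(\Phi) = \Phi_l \cdot \Phi_r = a \cdot \emptyset$, i.e.\ $d(\Phi) = a$. Define $s \colon L'(2n) \to L'(2n)$ of bidegree $(+1,+1)$ by left multiplication, $s(y) := \Phi \cdot y$. Applying the Leibniz rule in the dga $L'(2n)$ (with $|\Phi|=1$) gives
$$(ds + sd)(y) = d(\Phi \cdot y) + \Phi \cdot d(y) = d(\Phi)\cdot y - \Phi \cdot d(y) + \Phi \cdot d(y) = a \cdot y,$$
so $s$ is the required chain nullhomotopy, and in particular $a \cdot -$ vanishes on $H_{*,*}(L'(2n))$.

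For the second claim, recall from \cref{lemma:hook-map} that the hook map $h \colon L'(2n) \hookrightarrow L'(2n+2)$ is an injective map of dgas, so $L'(2n)$ embeds as a sub-dga of $L'(2n+2)$ and the quotient chain complex $L'(2n+2)/L'(2n)$ is well defined. Write $\Phi^{(2n)}$ for the element $\Phi$ of $L'(2n)_1$. Define $s' \colon L'(2n+2) \to L'(2n+2)$ by $s'(y) := h(\Phi^{(2n)}) \cdot y$. Since $h$ is a map of dgas, $d(h(\Phi^{(2n)})) = h(d(\Phi^{(2n)})) = h(a) = a$, and the same Leibniz computation as above shows $ds' + s'd = a \cdot -$ on $L'(2n+2)$.

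The key point for passing to the quotient is that $s'$ preserves the sub-chain-complex $h(L'(2n))$: for any $z \in L'(2n)$,
$$s'(h(z)) = h(\Phi^{(2n)}) \cdot h(z) = h(\Phi^{(2n)} \cdot z) \in h(L'(2n)),$$
again using that $h$ is a dga map. Hence $s'$ descends to a chain nullhomotopy of multiplication by $a$ on $L'(2n+2)/L'(2n)$, and so $a$ annihilates its homology, as claimed.

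The essential input is the identity $d(\Phi) = a$, which reduces both claims to a one-line Leibniz computation; the only mild subtlety is in the second part, where one must take the nullhomotopy on $L'(2n+2)$ to be left multiplication by $h(\Phi^{(2n)})$ (rather than by $\Phi^{(2n+2)}$) in order to guarantee that it preserves the subcomplex and descends to the quotient.
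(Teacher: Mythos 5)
Your proof is correct and takes essentially the same route as the paper: both hinge on $d(\Phi)=a$ and use multiplication by $\Phi$ as the explicit nullhomotopy, and for the relative statement the paper's remark that $\CPL'(2n+2)/\CPL'(2n)$ is a $\CPL'(2n)$-module on which $a=0\in H_{0,1}(\CPL'(2n))$ acts is exactly what you spell out via left multiplication by $h(\Phi^{(2n)})$. The only (immaterial) difference is that the paper uses right multiplication $-\cdot\Phi$ where you use left multiplication.
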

\begin{proof}
We have $d(\Phi) = a$ with $\Phi$ as in \cref{definition:Phi}, so $- \cdot \Phi$ gives the required chain nullhomotopy. For the second statement we use that $\CPL'(2n+2)/\CPL'(2n)$ is a $\CPL'(2n)$-module, and $a=0 \in H_{0,1}(\CPL'(2n))$.
\end{proof}

We have that $\CPL'(2n) \otimes_{\mathbb{Z}[a]} \mathbb{Z} = \CPL(2n)$ and so on. From \cref{eq:HomologyOfF} we have that
$$H_{d,*}(\CPL(2n+2)/\CPL(2n)) = \begin{cases}
0 & d < 2n+1 \\
\mathbb{Z}\{x_{2(n+1)-1}\} & d=2n+1.
\end{cases}$$
There is a short exact sequence of chain complexes
$$0 \to \CPL'(2n+2)/\CPL'(2n) \overset{a \cdot -} \to \CPL'(2n+2)/\CPL'(2n) \to (\CPL'(2n+2)/\CPL'(2n))\otimes_{\mathbb{Z}[a]} \mathbb{Z} \to 0$$
and the latter is $(\CPL'(2n+2)/\CPL'(2n))\otimes_{\mathbb{Z}[a]} \mathbb{Z} = \CPL(2n+2)/\CPL(2n)$. Using \cref{lem: times a is zero on homology}, the long exact sequence on homology splits,
and we deduce by induction over homological degree that
$$H_{d,*}(\CPL'(2n+2)/ \CPL'(2n)) = \begin{cases}
0 & d < 2n+1 \\
\mathbb{Z}[a]/(a)\{x'_{2(n+1)-1}\} & d=2n+1
\end{cases}$$
for some class $x'_{2(n+1)-1}$ of weight $n+1$ which reduces to $x_{2(n+1)-1}$ modulo $a$.

\begin{lemma} \label{lemma: single cell universal case}
The dga $\CPL'(2n+2)$ is obtained from $\CPL'(2n)$ by attaching a single cell along $\partial(x'_{2(n+1)-1}) \in H_{2n,n+1}(\CPL'(2n))$.
\end{lemma}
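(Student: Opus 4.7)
I would adapt the argument in the proof of \cref{prop:Indec}, combined with base change from $\mathbb{Z}[a]$ to $\mathbb{Z}$. First, using the computation of $H_{2n+1,n+1}(\CPL'(2n+2)/\CPL'(2n)) \cong \mathbb{Z}[a]/(a)\{x'_{2(n+1)-1}\}$ established just above, I would pick a cycle representative $\tilde{x} \in \CPL'(2n+2)_{2n+1,n+1}$ lifting the generator $x'_{2(n+1)-1}$. Its boundary $d\tilde{x}\in\CPL'(2n)_{2n,n+1}$ is a cycle whose homology class is by definition $\partial x'_{2(n+1)-1}$. I would then form the quasi-free extension
$$E := \CPL'(2n)*_{\mathbb{Z}[a]}T_{\mathbb{Z}[a]}[x'_{2(n+1)-1}]$$
with differential extending that of $\CPL'(2n)$ by $d(x'_{2(n+1)-1}) = d\tilde{x}$, and the natural dga map $\phi\colon E\to\CPL'(2n+2)$ extending the inclusion of $\CPL'(2n)$ by $\phi(x'_{2(n+1)-1}) = \tilde{x}$. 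The remaining task is to show $\phi$ is a quasi-isomorphism.

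To verify this, I would first reduce modulo $a$ by applying $-\otimes_{\mathbb{Z}[a]}\mathbb{Z}$, yielding $\bar\phi\colon E/aE \to \CPL(2n+2)$; the source is the free extension $\CPL(2n)*_{\mathbb{Z}}T_{\mathbb{Z}}[x_{2(n+1)-1}]$ with $d(x_{2(n+1)-1})$ the mod-$a$ reduction of $d\tilde{x}$. Applying the argument of \cref{prop:Indec} (combined with the equivalence $M(2n)\simeq\CPL(2n)$ from \cref{proposition: equivalence of dga with model}) to the pair $(\CPL(2n+2),\CPL(2n))$ shows that $\CPL(2n+2)$ is equivalent to a free extension of $\CPL(2n)$ by a single cell in bidegree $(2n+1,n+1)$ attached along a generator of $H_{2n,n+1}(\CPL(2n))$. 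Since $x'_{2(n+1)-1}$ was constructed as a lift of $x_{2(n+1)-1}$, naturality of the connecting homomorphism implies the two attaching classes agree as homology classes; a standard change-of-generator argument (replacing the new generator by itself plus a bounding chain in $\CPL(2n)$) then shows that $\bar\phi$ is itself a quasi-iso.

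Finally, I would lift the conclusion from $\mathbb{Z}$ to $\mathbb{Z}[a]$ using that multiplication by $a$ acts nullhomotopically on both $E$ and $\CPL'(2n+2)$: for $\CPL'(2n+2)$ this is \cref{lem: times a is zero on homology}, and for $E$ the same argument applies since $E$ contains the element $\Phi\in\CPL'(2n)$ with $d\Phi=a$. The short exact sequences
$$0\to C\xrightarrow{a\cdot -} C\to C\otimes_{\mathbb{Z}[a]}\mathbb{Z}\to 0$$
for $C\in\{E,\CPL'(2n+2)\}$ therefore induce split short exact sequences on homology, compatible via $\phi$ and $\bar\phi$. The five-lemma applied inductively in homological degree then upgrades the quasi-iso $\bar\phi$ to show that $\phi$ is itself a quasi-iso. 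The main delicate step is matching attaching classes in the mod-$a$ reduction; this ultimately rests on naturality of the connecting homomorphism in the long exact sequences of the pairs $(\CPL'(2n+2),\CPL'(2n))$ and $(\CPL(2n+2),\CPL(2n))$.
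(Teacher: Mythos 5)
Your proposal is correct and follows the paper's strategy for the first two steps: form the quasi-free extension $E = \CPL'(2n) * T_{\mathbb{Z}[a]}[x'_{2(n+1)-1}]$ with the natural map to $\CPL'(2n+2)$, reduce modulo $a$, and identify the reduction with the cell attachment of \cref{prop:Indec}, which is an equivalence. (One small slip: the cell is not attached along a generator of $H_{2n,n+1}(\CPL(2n))$ — the attaching class is $\partial$ of a generator of the \emph{relative} group $H_{2n+1,n+1}(\CPL(2n+2)/\CPL(2n))\cong\mathbb{Z}$, and in the model it is the class $z_0 = \pm(n+1)\cdot(\text{generator})$; this does not affect your argument, since the Hurewicz principle only needs the relative class to generate.) Where you genuinely diverge is the descent from $\mathbb{Z}$ back to $\mathbb{Z}[a]$: the paper applies a graded Nakayama-type observation to the mapping cone of $E \to \CPL'(2n+2)$ (a bounded-below complex of finitely generated free graded $\mathbb{Z}[a]$-modules whose reduction mod $a$ is acyclic is itself acyclic, using that $\mathbb{Z}[a]$ is Noetherian), whereas you use the nullhomotopy $-\cdot\Phi$ of multiplication by $a$ on both $E$ and $\CPL'(2n+2)$ to split the Bockstein long exact sequences into short exact sequences and then run a five-lemma induction on homological degree. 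Both are valid: the paper's route needs finite generation and boundedness of the grading but no special elements, while yours exploits the specific element $\Phi$ with $d\Phi = a$ (which does lie in $E$, via $\CPL'(2n)$) and avoids any Noetherian hypothesis; note that the splitting of the sequences is not actually needed for your five-lemma chase, only their exactness.
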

\begin{proof}
The above calculation gives a map of dgas
\begin{equation}\label{eq:CellAtt}
\CPL'(2n) * T_{\mathbb{Z}[a]}[x'_{2(n+1)-1}] \to \CPL'(2n+2) 
\end{equation} 
from the cell attachment. On applying $- \otimes_{\mathbb{Z}[a]} \mathbb{Z}$ this gives the map of dgas
\begin{equation}\label{eq:CellAtt2}
\CPL(2n) * T_{\mathbb{Z}}[x_{2(n+1)-1}] \to \CPL(2n+2)
\end{equation}
given by attaching a cell along $\partial(x_{2(n+1)-1})$, which is an equivalence by \cref{prop:Indec}. 

To complete the argument we make two observations. Firstly, observe that if $M$ is a finitely-generated $\mathbb{Z}[a]$-module with additional grading such that $M \otimes_{\mathbb{Z}[a]}\mathbb{Z}=0$, then $M=0$. This is because the assumption means that all elements of $M$ are infinitely divisible by $a$, but by finite generation there exists $k$ such that $M_i=0$ for $i<k$. Secondly, observe that if $C$ is a chain complex of finitely-generated free $\mathbb{Z}[a]$-modules with additional grading such that $C  \otimes_{\mathbb{Z}[a]}\mathbb{Z}$ is acyclic, then $C$ is acyclic. This is because the homology of such a chain complex consists of finitely-generated $\mathbb{Z}[a]$-modules with additional grading (as $\mathbb{Z}[a]$ is Noetherian), and the short exact sequence $0 \to C \overset{a \cdot -}\to C \to C \otimes_{\mathbb{Z}[a]} \mathbb{Z} \to 0$ of chain complexes gives an injection on homology $H_*(C)  \otimes_{\mathbb{Z}[a]}\mathbb{Z} = \coker(H_*(C) \xrightarrow{a \cdot -} H_*(C)) \hookrightarrow H_*(C \otimes_{\mathbb{Z}[a]}\mathbb{Z} )=0$, so by the first observation $H_*(C)=0$.

Apply the latter observation with $C$ the mapping cone of \cref{eq:CellAtt}. Then $C\otimes_{\mathbb{Z}[a]} \mathbb{Z}$ is the mapping cone of \cref{eq:CellAtt2} and hence is acyclic, so $C$ is acyclic too.
\end{proof}

\begin{proof}[Proof of \cref{prop: model universal case}] It follows from \cref{lemma: single cell universal case} and induction that there is a model
$$(T_{\mathbb{Z}[a]}(x'_1, x'_3, \ldots, x'_{2n-1}), d') \overset{\sim}\to \CPL'(2n),$$
reducing modulo $a$ to the model
$$M(2n) := (T_{\mathbb{Z}}[x_1, x_3, \ldots, x_{2n-1}], d_\text{model}) \overset{\sim}\to \CPL(2n)$$
established earlier. In particular, $\CPL'(2n) \to \CPL(2n)$ is an isomorphism in bidegree $(1,1)$, since there can be no $a$-multiples in this bidegree, so we must have $x'_1 \mapsto \Phi$ and so $d'(x'_1) = a$. A priori, for $2i-1 > 1$ we only have the equality $d'(x'_{2i-1}) = d_\text{model}(x_{2i-1})$ mod $a$. We wish to show that this equality also holds before modding out by $a$, so that the above model is $M'(2n)$.

This will be a simple consequence of the bigrading. If an element $a^j \cdot X$ of $T_{\mathbb{Z}[a]}(x'_1, x'_3, \ldots, x'_{2n-1})$ with $X$ a word in $x'_1, x'_3, \ldots, x'_{2n-1}$ has bidegree $(d, w)$, then $X$ has bidegree $(d, w-j)$ and so has word length $2(w-j)-d$. Writing
$$d'(x'_{2i-1}) = d_\text{model}(x_{2i-1}) + a \cdot \epsilon_i$$
it follows that $\epsilon_i$ has bidegree $(2i-2, i-1)$. It is a sum of $a^j \cdot X_j$'s with $X_j$ having word length $2(i-1-j)-(2i-2) = -2j$. This is impossible unless $j=0$ and $X_j$ is (a $\mathbb{Z}$-multiple of) the empty word. Thus $\epsilon_i$ must have homological degree 0. If $2i-1 > 1$ then for homological degree reasons we must have $\epsilon_i=0$. If $2i-1=1$ then this need not be the case, but we have arranged that $d'_\text{model}(x'_1) = a = d'(x'_1)$. \end{proof}

\subsection{Proof of \cref{thm:Main}} Before giving the proof, we recall the statement.

\begin{restate}{Theorem}{thm:Main}
    There is a weak equivalence 
$$M(2n;R,a):=(T_R[x_1, x_3, \ldots, x_{2 n-1}], d) \overset{\sim}\to \CPL(2n; R,a)$$
of differential graded $R$-algebras, where the left-hand side is the tensor algebra on the generators $x_i$ of degree $i$ equipped with the differential given by 
$$d(x_1)=a \text{ and } d(x_{2i-1}) = \sum_{\substack{j+k=i \\ j,k>0}} \binom{i}{j} x_{2j-1}x_{2k-1} \text{ for } 2i-1>1,$$ and which sends $x_1$ to $\Phi$.
\end{restate}
\begin{proof} Applying the derived tensor product $(-)\otimes^{\mathbb{L}}_{\mathbb{Z}[a]} R$ to the weak equivalence of \cref{prop: model universal case} gives a weak equivalence $$M'(2n) \otimes^{\mathbb{L}}_{\mathbb{Z}[a]} R \overset{\sim} \to \CPL'(2n) \otimes^{\mathbb{L}}_{\mathbb{Z}[a]} R.$$
Since $M'(2n)$ and $\CPL'(2n)$ are both complexes of free $\mathbb{Z}[a]$-modules, we may replace both derived tensor products with underived ones. The resulting quasi-isomorphism is the desired one. Note that under this equivalence $x_1$ is mapped to $\Phi$, following through from our normalisation in \cref{exampe:model 2n=2} when proving \cref{proposition: equivalence of dga with model}.
\end{proof}

\section{Calculations using small models}
\label{sec:Calc}
In this section we prove \cref{cor:MainCalculations}, which lists algebraic consequences of the model in \cref{thm:Main}. It has parts (a)--(e), which will be proved in turn.  

\subsection{Part (a) of \cref{cor:MainCalculations}}

\begin{restate}{Corollary}{cor:MainCalculations}
\mbox{}
\begin{enumerate}
    \item[(a)]
     $H_0(\CPL(2n;R,a)) = R/a$, so the $R$-module structure on $H_i(\CPL(2n;R,a))$ descends to $R/a$, i.e.~the homology groups are all annihilated by $a$.
\end{enumerate}
\end{restate}
\begin{proof}
    We have $a = d(\Phi)$, so multiplication by $a$ is chain nullhomotopic.
\end{proof}

\subsection{Parts (b) and (c) of \cref{cor:MainCalculations}: the case $a=0$}
\label{subsection:calculations-the-parameter-zero-case}

When $a=0$ the model is
$$M(2n;R,0) = (T_R[x_1, x_3, \ldots, x_{2n-1}], d_\text{model})$$
where the differential is given by 
$$d_\text{model}(x_{2i-1}) = \sum_{\substack{j+k = i \\ j,k>0}} \binom{i}{j} x_{2j-1}x_{2k-1}.$$
Consider the bigraded free $R$-module
$$C_n := R\{1, x_1, x_3, \ldots, x_{2n-1}\}$$
with $|x_{2i-1}|=(2i,i)$, and endow it with a coproduct by the formula
$$\psi(x_{2i-1}) := \sum_{j+k=i} \binom{i}{j} x_{2j-1} \otimes x_{2k-1},$$
a counit by $1 \mapsto 1, x_{2i-1} \mapsto 0 : C_n \to R$, and a coaugmentation by $1 \mapsto 1 : R \to C_n$. Then the cobar complex $\mathrm{Cobar}(R, C_n, R)$ of this coaugmented coalgebra is tautologically the dga $M(2n;R,0)$. The linear dual of the coalgebra $C_n$ is none other than the truncated divided power algebra
$$A_n := C_n^\vee \cong \Gamma_R[y]/(y^{[p]} : p \geq n+1)$$
where $y^{[i]}$, of bidegree $(-2i,i)$, is dual to $x_{2i-1}$. 

\begin{proposition}\label{prop: homology is Ext}
There is a bigraded algebra isomorphism
$$H_{*,*}(\CPL(2n; R, 0)) \cong \mathrm{Ext}_{A_n}^*(R, R).$$    
\end{proposition}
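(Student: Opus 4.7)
The strategy is to identify $M(2n;R,0)$ with a cobar construction and then invoke classical bar-cobar duality, so that $\Ext_{A_n}^*(R,R)$ appears on the other side.

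First I would observe that, by construction, $M(2n;R,0)$ is literally the cobar complex $\mathrm{Cobar}(R, C_n, R) = (T_R(\Sigma^{-1}\bar{C}_n), d_\psi)$ of the coaugmented coalgebra $C_n$: the generators $x_{2i-1}$ of $M(2n;R,0)$ correspond to $\Sigma^{-1} x_{2i-1} \in \Sigma^{-1}\bar{C}_n$, and the formula
$$d_\text{model}(x_{2i-1}) = \sum_{\substack{j+k=i\\ j,k>0}} \binom{i}{j}\, x_{2j-1} x_{2k-1}$$
is precisely the reduced coproduct $\bar\psi(x_{2i-1})$ transported across the tensor algebra. Combined with \cref{thm:Main} this reduces the Proposition to producing a bigraded algebra isomorphism $H_{*,*}(\mathrm{Cobar}(C_n)) \cong \Ext^*_{A_n}(R,R)$.

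Second, because $C_n$ is free of finite rank $n+1$ over $R$, linear duality converts the coproduct $\psi$ into an associative product on $A_n = C_n^\vee$, and a direct calculation on the dual basis $\{y^{[i]} := x_{2i-1}^\vee\}$ yields $y^{[j]}\cdot y^{[k]} = \binom{j+k}{j}\, y^{[j+k]}$ (with the relations $y^{[i]} = 0$ for $i \geq n+1$), so that $C_n^\vee = A_n$ as augmented $R$-algebras.

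Third, I would invoke classical bar-cobar duality. The normalised bar resolution $\Bar(A_n, A_n, R) \xrightarrow{\sim} R$ is a resolution of $R$ by free left $A_n$-modules whose underlying graded module is $A_n \otimes_R T(\Sigma\bar A_n)$. Applying $\Hom_{A_n}(-, R)$ gives a complex whose underlying $R$-module is $\Hom_R(T(\Sigma\bar A_n), R)$; using that $\bar A_n$ is free of finite rank this identifies naturally with $T(\Sigma^{-1}\bar C_n)$, and a Koszul-sign check shows that the differential dual to multiplication in $A_n$ coincides with $d_\psi$. Hence the underlying cochain complexes of $\Ext^*_{A_n}(R,R)$ and $\mathrm{Cobar}(C_n)$ agree on the nose, giving the desired bigraded $R$-module isomorphism.

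For the algebra structure, the Yoneda product on $\Ext^*_{A_n}(R,R)$ is represented, at the level of the bar resolution, by splicing two cocycles together. Under the duality identification of the previous paragraph, splicing becomes concatenation in the tensor algebra $T(\Sigma^{-1}\bar C_n)$, which is the product on $\mathrm{Cobar}(C_n)$. Concretely, one writes down a chain-level dga map from $\mathrm{Cobar}(C_n)$ into an endomorphism dga of the bar resolution and checks that it induces the Yoneda product on cohomology. The main obstacle is precisely this bookkeeping of bigrading and Koszul signs — confirming on the nose (not merely up to a sign) that the cobar differential dual to multiplication in $A_n$ equals $d_\psi$, and that concatenation in $T(\Sigma^{-1}\bar C_n)$ realises the Yoneda product.
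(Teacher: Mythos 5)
Your proposal is correct and follows essentially the same route as the paper: identify $M(2n;R,0)$ tautologically with $\mathrm{Cobar}(R,C_n,R)$, dualise $C_n$ to the truncated divided power algebra $A_n$, and recognise the cobar complex as the $R$-linear dual of the bar resolution of $A_n$, which computes $\Ext^*_{A_n}(R,R)$ with its Yoneda product. The only cosmetic difference is that the paper packages your third step as the one-line derived adjunction $\Hom_R(\mathrm{Bar}(R,A_n,R),R)\simeq \mathbb{R}\mathrm{Hom}_R(R\otimes^{\mathbb{L}}_{A_n}R,R)\simeq \mathbb{R}\mathrm{Hom}_{A_n}(R,R)$, whereas you unpack the same identification explicitly.
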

\begin{proof}
Using that $A_n$ is free as an $R$-module, so $\mathrm{Bar}(R,A_n,R)$ is a chain complex of free $R$-modules, there are equivalences
\begin{eqnarray*}
    \mathrm{Cobar}(R, C_n, R) &=& \mathrm{Hom}_R(\mathrm{Bar}(R,A_n,R), R) \\ &\simeq& \mathbb{R}\mathrm{Hom}_R(R \otimes_{A_n}^\mathbb{L} R, R) \simeq \mathbb{R}\mathrm{Hom}_{A_n}(R, R).
\end{eqnarray*}
Recall that $\mathrm{Cobar}(R, C_n, R)=M(2n;R,0)$ and apply \cref{thm:Main}.
\end{proof}

For general $R$ this is probably the best description of the homology of the dga of planar loops. However, in specific cases we can obtain a more concrete description.

\begin{rerestate}{Corollary}{cor:MainCalculations}
{restating:cor:MainCalculations(b)}
\mbox{}
\begin{enumerate}
    \item[(b)]
     For $2n=2$ there is an isomorphism
$$H_{*,*}(\CPL(2; R, 0)) \cong R[\Phi]$$
of bigraded algebras, with $|\Phi| = (1,1)$.
\end{enumerate}
\end{rerestate}
\begin{proof} When $2n=2$ the model of \cref{thm:Main} is $R[x_1]$, with zero differential, and $x_1$ maps to $\Phi \in \CPL(2)$.
\end{proof}

Almost all parts of \cref{cor:MainCalculations} part (c) will follow from \cref{prop: homology is Ext}. The exception is the identification of $\alpha$ as a Massey product, which needs some preparatory work in the model.

For our conventions on Massey products, we follow May \cite{MayMatricMassey}. Consider a dga $(A_*, d)$. Write $\overline{a} := (-1)^{\deg(a) + 1} a$. Given elements $\zeta_1, \dots , \zeta_i$ in a dga $(A_*, d)$, a \emph{defining system} \cite[Definition 1.2]{MayMatricMassey} consists of elements $a_{j,k}$ of $A$, for $0 \leq j < k \leq i$ and $(j,k) \neq (0,i)$, thought of as a matrix with only above-diagonal entries, and no top-right entry, such that
\begin{enumerate}
    \item for each $j$, $a_{j,j+1}$ is a cycle representing $\zeta_j$, and
    \item for $k \geq j+2$, we have $d(a_{j,k}) = \sum_{\ell=j+1}^{k-1} \overline{a}_{j,\ell} a_{\ell,k}$.
\end{enumerate}

The \emph{Massey product} $\langle \zeta_1, \dots , \zeta_i \rangle$ is the set of homology classes represented by elements $\sum_{\ell=1}^{i-1} \overline{a}_{0,\ell} a_{\ell,i}$ for some defining system $(a_{j,k})$. The Massey product is \emph{strictly defined} if every partial defining system (for some $m$, the data of only those $a_{j,k}$ for which $k-j \leq m$) can be completed to a defining system. It is \emph{uniquely defined} if (it is strictly defined and) consists of only a single element.

Since our dga $M(2n)$ is bigraded, our Massey products will be taken in a bigraded sense, meaning that if $|\zeta_j|=(d_j,w_j)$, then we insist that $|\langle \zeta_1, \dots \zeta_i \rangle| = ((i-2) + \sum_{j=1}^i d_j, \sum_{j=1}^i w_j)$.

\begin{lemma} \label{lemma: Massey} 
Assume $n! \in R^\times$. In the dga $M(2n) = (T_R[x_1, x_3, \ldots, x_{2n-1}], d_\text{model}),$ write $\Phi$ for the homology class $[x_1]$. For $3 \leq i \leq n+1$, the $i$-fold Massey product $m_{i} = \langle \Phi , \dots , \Phi \rangle$ is strictly and uniquely defined, consisting of the single element $$\frac{1}{i!} \sum_{\substack{j+k=i\\ j,k>0}} { i \choose j} x_{2j-1}x_{2k-1}.$$
\end{lemma}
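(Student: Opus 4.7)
The plan is to exhibit an explicit bihomogeneous defining system, compute the associated Massey product element, and then use the sparsity of $M(2n)$ in the relevant bidegrees to show that this defining system is unique, giving both strict and unique definedness.

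First I would set $a_{j,k} := \tfrac{1}{(k-j)!}\, x_{2(k-j)-1}$ for $0 \leq j < k \leq i$ with $(j,k) \neq (0,i)$, noting that this is well-defined since $k-j \leq i - 1 \leq n$ and $n! \in R^\times$ implies $(k-j)! \in R^\times$. Since $|a_{j,\ell}| = 2(\ell-j)-1$ is odd, $\overline{a}_{j,\ell} = a_{j,\ell}$ throughout, so signs do not intervene. Direct computation shows $a_{j,j+1} = x_1$ is a cycle representing $\Phi$, and the identity $\binom{k-j}{\alpha}/(k-j)! = 1/(\alpha!\beta!)$ with $\alpha+\beta = k-j$ yields
\[d(a_{j,k}) = \sum_{\substack{\alpha+\beta=k-j \\ \alpha,\beta>0}} \frac{1}{\alpha!\beta!}\, x_{2\alpha-1}x_{2\beta-1} = \sum_{\ell=j+1}^{k-1} a_{j,\ell}\, a_{\ell,k},\]
so $(a_{j,k})$ is a defining system. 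The resulting Massey product element is then
\[\sum_{\ell=1}^{i-1} a_{0,\ell}\, a_{\ell,i} \;=\; \sum_{\ell=1}^{i-1} \frac{1}{\ell!(i-\ell)!}\, x_{2\ell-1} x_{2(i-\ell)-1} \;=\; \frac{1}{i!} \sum_{\substack{j+k=i \\ j,k > 0}} \binom{i}{j}\, x_{2j-1} x_{2k-1},\]
as claimed.

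Next I would establish strict and unique definedness using the bigrading. The key observation is that a monomial $x_{2i_1-1}\cdots x_{2i_r-1}$ of bidegree $(d,w)$ has word length $r = 2w - d$. Consequently $M(2n)_{2p-1, p} = R\{x_{2p-1}\}$ is one-dimensional for $1 \leq p \leq n$ (word length $1$ forces a single generator, and $x_{2p-1}$ is the unique generator of this bidegree), while $M(2n)_{2p, p} = 0$ for $p \geq 1$ (word length $0$ forces the empty word, of bidegree $(0,0)$). Since Massey products are taken in the bigraded sense, every defining system is bihomogeneous with $a_{j,k}$ of bidegree $(2(k-j)-1, k-j)$, hence $a_{j,k} = \lambda_{j,k}\, x_{2(k-j)-1}$ for scalars $\lambda_{j,k} \in R$. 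The cycle condition on $a_{j,j+1}$, together with $[x_1] = \Phi$ and the absence of $1$-boundaries in bidegree $(1,1)$, forces $\lambda_{j,j+1} = 1$. Matching coefficients of $x_1\, x_{2(k-j-1)-1}$ in the relation $d(a_{j,k}) = \sum_\ell a_{j,\ell}\, a_{\ell,k}$ then gives $(k-j)\, \lambda_{j,k} = \lambda_{j+1,k}$, and induction on $k-j$ forces $\lambda_{j,k} = 1/(k-j)!$. Hence the bihomogeneous defining system is unique, and the Massey product consists of a single element. Strict definedness follows because any partial bihomogeneous defining system $\{a_{j,k}\}_{k-j \leq m}$ with $m < i-1$ must by the above argument coincide with the restriction of our canonical system, and it extends to level $m+1$ by setting $a_{j,k} := \tfrac{1}{(k-j)!}\, x_{2(k-j)-1}$, which satisfies the defining system axioms by the calculation in Step 1.

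The main obstacle is correctly handling the bigraded setting — in particular, pinning down the one-dimensional chain modules $M(2n)_{2p-1,p}$ and the vanishing of $M(2n)_{2p,p}$, after which the defining system is entirely rigid and the identification with the claimed combinatorial expression reduces to the elementary identity $\binom{i}{\ell} = i!/(\ell!(i-\ell)!)$.
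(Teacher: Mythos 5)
Your proposal is correct and follows essentially the same route as the paper: both arguments use the bigrading to force every entry $a_{j,k}$ of a (partial) defining system to have word length $1$, hence to be a scalar multiple of $x_{2(k-j)-1}$, and then use the differential relation to pin the scalar down to $1/(k-j)!$. The only difference is organizational — the paper runs a strong induction on the length $i$ of the Massey product via sub-defining-systems, while you induct on the gap $k-j$ inside a single defining system via the recursion $(k-j)\lambda_{j,k}=\lambda_{j+1,k}$ — and both yield the same conclusion.
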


The important part is the case $i=n+1$: for smaller $i$, $m_i$ contains $[d(\frac{1}{i!} x_{2i-1})]=[0]$.

\begin{proof} Fix $n$. We work by strong induction on $i$, and will show for each $i$ that there is precisely one defining system, namely $a_{j,k} = \frac{1}{(k-j)!} x_{2(k-j)-1}$, and that each partial defining system must also have these entries. This suffices, since the Massey product is therefore strictly and uniquely defined, and its single element is given by
$$\sum_{\ell=1}^{i-1} \overline{a}_{0,\ell} a_{\ell,i} = \sum_{\ell=1}^{i-1}  \frac{1}{(\ell)!(i-\ell)!}x_{2\ell-1} x_{2(i - \ell)-1} = \frac{1}{i!}\sum_{\substack{j+k = i \\ j,k > 0}}  {i \choose j} x_{2j-1} x_{2k-1}.$$

We begin with the (illustrative) base case $i=3$. The generator $x_1$ is the only cycle representing $\Phi$, since  $M(2n)_{1,1} = R \{x_1\}$. A defining system for $\langle \Phi , \Phi , \Phi \rangle$ must therefore take the form
$$\begin{pmatrix}
0 & x_1 & a_{0,2} & * \\
& 0 &  x_1 & a_{1,3} \\
& & 0 & x_1 \\
& & & 0
\end{pmatrix}.$$
The classes $a_{0,2}$ and $a_{1,3}$ must both have boundary $x_1^2$, hence must have bidegree $(3,2)$, and in particular word length 1. This bidegree is one-dimensional: $M(2n)_{3,2} = R \{x_{3}\}$, and $d(x_3) = 2 x_1^2$, so the only option is to set $a_{0,2}=a_{1,3}=\frac{1}{2} x_3$. The Massey product $\langle \Phi , \Phi , \Phi \rangle$ is thus the singleton $\{\frac{1}{2} x_1 x_3 + \frac{1}{2} x_3 x_1 \} ,$ which is the required formula.

For the inductive step, let $i \geq 4$. Fix $(j_0, k_0) \neq (0,i)$, and consider the part of the defining system for which $j \geq j_0$, $k \leq k_0$, and $(j,k) \neq (j_0,k_0)$. By definition, this is a defining system for a shorter (length $k_0 - j_0$) iterated Massey product $\langle \Phi , \dots, \Phi \rangle$, so by induction we have $a_{j,k} = \frac{1}{(k-j) !} x_{2 (k-j) -1}$ for each such $j$ and $k$. This excludes $(j_0,k_0)$ itself, so we may fill in all of the defining system apart from the last two entries, $a_{0,i-1}$ and $a_{1,i}$. As in the base case, each of these must be of bidegree $(2i-2,i)$ (so of word length 1) with boundary $\frac{1}{(i-1)!}\sum_{\substack{j+k = i - 1 \\ j,k > 0}}  {i -1 \choose j} x_{2j-1} x_{2k-1}$, so must in fact be $\frac{1}{(i-1)!} x_{2(i-1)-1}$ (this generator exists because we assumed $i\leq n + 1$). This establishes the existence and uniqueness of the defining system. \end{proof}

\begin{rerestate}{Corollary}{cor:MainCalculations}
{restating:cor:MainCalculations(c)} \mbox{}
\begin{enumerate}
    \item[(c)] If $n! \in R^\times$ and $2n \geq 4$ then there is an isomorphism
$$H_{*,*}(\CPL(2n; R, 0)) \cong T_R[\Phi,  \alpha]/(\Phi^2, \Phi\alpha - \alpha\Phi) = R[\Phi, \alpha]/(\Phi^2)$$
of bigraded algebras, with $|\Phi| = (1,1)$ and $|\alpha| = (2n, n+1)$. The class $\alpha$ is the unique element of the $(n + 1)$-fold Massey product $\langle \Phi, \Phi, \ldots, \Phi \rangle$, which is strictly defined.
\end{enumerate}

\end{rerestate}

\begin{proof}
Under the assumption that $n! \in R^\times$ the canonical map
$$R[y]/(y^{n+1}) \to \Gamma_R[y]/(y^{[p]} : p \geq n+1) =: A_n$$
is an isomorphism, and so
$$H_{*,*}(\CPL(2n, R, 0)) \cong \mathrm{Ext}^*_{R[y]/(y^{n+1})}(R,R).$$
Using the periodic resolution
\begin{equation*}
\begin{tikzcd}[column sep=2.5ex]
   \cdots \arrow[rr, "y"] & {} \ar[draw=none]{d}[name=Y, anchor=center]{} &  \Sigma^{-4n-4,2n+2}R[y]/(y^{n+1})\{z_{4n+1}\} \ar[rounded corners,
            to path={ -- ([xshift=2ex]\tikztostart.east)
                      |- (Y.center) \tikztonodes
                      -| ([xshift=-2ex]\tikztotarget.west)
                      -- (\tikztotarget)}]{dll}[swap, at end]{y^n} \\  
   \Sigma^{-2n-4, n+2}R[y]/(y^{n+1})\{z_{2n+1}\} \arrow[rr, "y"] & {} \ar[draw=none]{d}[name=X, anchor=center]{} &  \Sigma^{-2n-2,n+1}R[y]/(y^{n+1})\{z_{2n}\} \ar[rounded corners,
            to path={ -- ([xshift=2ex]\tikztostart.east)
                      |- (X.center) \tikztonodes
                      -| ([xshift=-2ex]\tikztotarget.west)
                      -- (\tikztotarget)}]{dll}[swap, at end]{y^n} \\      
  \Sigma^{-2,1}R[y]/(y^{n+1})\{z_1\} \arrow[rr, "y"] & {} & R[y]/(y^{n+1})\{1\} \rar{\epsilon} & R 
\end{tikzcd}    
\end{equation*}
to calculate this gives
$$\mathrm{Ext}^*_{R[y]/(y^{n+1})}(R,R) = R\{1, \zeta_{1,1}, \zeta_{2n, n+1}, \zeta_{2n+1, n+2}, \ldots\},$$
where $\zeta_{i,j}$ is dual to $z_i$, and has bidegree $(i, j)$. For $n \geq 2$ it is standard to compute the Yoneda product here (for example by constructing a representing map explicitly using the resolution recipe for the product, which can be found e.g.~in \cite[Section 9.2]{McCleary}), giving the presentation $T_R[\Phi,  \alpha]/(\Phi^2, \Phi\alpha - \alpha\Phi)$ of the homology, with $\Phi = \zeta_{1,1}$ and $\alpha = \zeta_{2n,n+1}$.

It remains only to identify $\alpha$ as a Massey product. Note first that in the model $M(2n)$, the differential into bidegree $(2n,n+1)$ comes from bidegree $(2n+1,n+1)$, which is spanned by words of length 1, i.e.~by generators. None of the generators have this bidegree, so $M(2n)_{2n+1,n+1} = 0$, and the homology in bidegree $(2n,n+1)$ is equivalently the cycles.

By \cref{lemma: Massey}, the $(n+1)$-fold Massey product $\langle \Phi , \dots , \Phi \rangle$ is strictly and uniquely defined in the model $M(2n)$, and its unique element is the homology class of the (nonzero) cycle $$\mu := \frac{1}{(n+1)!} \sum_{\substack{j+k=n+1\\ j,k>0}} { n+1 \choose j} x_{2j-1}x_{2k-1}$$
in bidegree $(2n,n+1)$. The coefficients of $\mu$ have no non-unit divisors, hence no non-unit common divisor, so $\mu$ is not divisible by any non-unit. It follows that $\mu$ generates $H_{2n,n+1}(M(2n))$, so $\mu = u \alpha$ for $u \in R^\times$, and replacing $\alpha$ by $u \alpha$ does not change the presentation.
\end{proof}

In the following example the assumption of the previous Corollary fails only slightly.

\begin{example} \label{example: 2n=2p}
Consider the data $(\mathbb{Z}/p, 0)$ with $p$ a prime number and $2n=2p$, in which case $n!$ is not a unit in $\mathbb{Z}/p$. Write $M(2p)=M(2p;\mathbb{Z}/p, 0)$. In this case $d(x_{2p-1})=0$ and so we have a coproduct
$$M(2p) = M(2p-2) * (T_{\mathbb{Z}/p}[x_{2p-1}], 0)$$
of dgas. As a chain complex we can write this as the sum 
$$M(2p-2) * (T_{\mathbb{Z}/p}[x_{2p-1}], 0) = \bigoplus_{i=0}^\infty M(2p-2) \cdot x_{2p-1} \cdot M(2p-2) \cdots x_{2p-1} \cdot M(2p-2),$$
where there are $i$ instances of $x_{2p-1}$ in the $i$th summand. This expresses $M(2p)$ as a sum of tensor powers over $\mathbb{Z}/p$ of copies of $M(2p-2)$, which by the K{\"u}nneth theorem shows that $H_{*,*}(\CPL(2p ; \mathbb{Z}/p, 0)) \cong H_{*,*}(\CPL(2p-2 ; \mathbb{Z}/p, 0)) * \mathbb{Z}/p[x_{2p-1}]$. As \cref{cor:MainCalculations} part (c) applies to the first factor, we obtain an isomorphism
$$T_{\mathbb{Z}/p}[\Phi, \alpha, x_{2p-1}]/(\Phi^2, \Phi\alpha - \alpha \Phi) \overset{\sim}\to H_{*,*}(\CPL(2p; \mathbb{Z}/p, 0)),$$
where $\alpha := [\tfrac{1}{p}d_\text{model}(x_{2p-1})]$.
\end{example}

\subsection{Part (d) of \cref{cor:MainCalculations}: the case $(R, a)$, $a$ not a zerodivisor}\label{sec:aNotZerodivisor}

\begin{rerestate}{Corollary}{cor:MainCalculations}
{restating:cor:MainCalculations(d)}\mbox{}
\begin{enumerate}
    \item[(d)] If $a \in R$ is not a zerodivisor and $n! \in (R/a)^\times$ then there is an isomorphism 
$$H_{*}(\CPL(2n; R, a)) \cong R/a[\beta]$$
of graded algebras, with $|\beta|=2n$. The class $\beta$ is uniquely determined by the property that the change-of-rings map $(R,a) \to (R/a,0)$ sends it to the class $\alpha \in H_{2n,n+1}(\CPL(2n ; R/a, 0))$ of part (c) (or to $\Phi^2$ if $2n=2$). It follows in particular that $H_{*}(\CPL(2n; R, a))$ admits a bigrading.
\end{enumerate}

\end{rerestate}

\begin{proof} Since $a$ is not a zerodivisor, \cref{sec:ChangingCoeff} gives a short exact sequence (which a priori is only singly graded) for every $i$:
$$0 \to H_i(\CPL(2n;R,a)) \overset{q}\to H_i(\CPL(2n;R/a,0)) \overset{\partial}\to H_{i-1}(\CPL(2n;R,a)) \to 0.$$
We know the middle term: by (c) it is $R/a[\Phi, \alpha]/(\Phi^2)$ if $2n \geq 4$, and by (b) it is $R/a[\Phi]$ if $2n=2$. We also know that $H_0(\CPL(2n;R,a)) \cong R/a$. It follows by induction on degree that $H_{*}(\CPL(2n; R, a))$ is as claimed as a graded $R$-module. Since the map $q$ above is an injection, the algebra structure must be as claimed.
\end{proof}

\begin{remark}
It is not so easy to find interesting (i.e.~not formal) dgas whose homology is a polynomial ring: see Dwyer--Greenlees--Iyengar \cite[Section 6]{DwyerGreenleesIyengar} and Bay{\i}nd{\i}r \cite{Bayindir}. The above with data $(\mathbb{Z}, p)$ for $p$ a prime number and $n < p$  gives a dga $\CPL(2n;\mathbb{Z},p)$ having homology $\mathbb{Z}/p[\beta]$ with $\beta$ of degree $2n$. If $\CPL(2n;\mathbb{Z},p)$ were formal then it would contain the dga $\mathbb{Z}/p$ as a retract (in the homotopy category of dgas) and so the dga $\CPL(2n;\mathbb{Z}/p,0) \simeq \CPL(2n;\mathbb{Z},p) \otimes_\mathbb{Z}^\mathbb{L} \mathbb{Z}/p$ would contain the dga $\mathbb{Z}/p \otimes_\mathbb{Z}^\mathbb{L} \mathbb{Z}/p$ as a retract. The latter has homology $\mathbb{Z}/p[\Phi]/(\Phi^2)$, which is indeed a retract of the homology algebra $\mathbb{Z}/p[\Phi, \alpha]/(\Phi^2)$ of $\CPL(2n;\mathbb{Z}/p,0)$, but if it was a retract of dgas then the Massey product $\alpha = \langle \Phi, \ldots, \Phi \rangle$ would lie in the subalgebra generated by $\Phi$, and it does not. So $\CPL(2n;\mathbb{Z},p)$ is not formal, and even its $2n$-truncation is not formal. When $n=p-1$ it must therefore be the example found by Bay{\i}nd{\i}r \cite[Theorem 1.8]{Bayindir}. We are grateful to Markus Land for an interesting discussion on this topic.
\end{remark}

\subsection{Part (e) of \cref{cor:MainCalculations}: the case $2n=4$}

In this case a fairly complete analysis can be made, as the dga $(T_R[x_1, x_3], d)$ with $d(x_1)=a$ and $d(x_3)=2x_1^2$ is not too complicated. The resulting homology is quite rich, which is why we discuss it in detail. We hope the methods we describe will be useful for further calculations.

We will first discuss $H_{*,*}(\CPL(4 ; \mathbb{Z}, 0))$, from which the method of \cref{sec:ChangingCoeff} can then be used to understand the universal case $H_{*,*}(\CPL(4 ; \mathbb{Z}[a], a))$. To begin, \cref{example: 2n=2p} shows that $H_{*,*}(\CPL(4;\mathbb{Z}/2, 0)) \cong T_{\mathbb{Z}/2}[x_1, x_3]$. The short exact sequence 
$$\CPL(4;\mathbb{Z}/2, 0) \overset{2}\to \CPL(4;\mathbb{Z}/4, 0)\to \CPL(4;\mathbb{Z}/2, 0)$$
yields a Bockstein operator $\beta$ on $H_{*,*}(\CPL(4;\mathbb{Z}/2, 0)) \cong T_{\mathbb{Z}/2}[x_1, x_3]$, which is a derivation squaring to zero and satisfying $\beta(x_1)=0$ and $\beta(x_3)=x_1^2$. Considering it as a differential on $T_{\mathbb{Z}/2}[x_1, x_3]$, we write $\beta H_*$ for its homology (this is a graded algebra).

\begin{proposition}\label{prop:BocksteinHomology}
Writing $\gamma := x_1x_3 + x_3x_1$ and $\Phi := x_1$, which are both annihilated by $\beta$, there is an algebra isomorphism $\beta H_* \cong \mathbb{Z}/2[\Phi, \gamma]/(\Phi^2)$.
\end{proposition}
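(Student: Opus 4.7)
The plan is to exhibit an algebra homomorphism $\phi\colon \mathbb{Z}/2[\Phi, \gamma]/(\Phi^2) \to \beta H_*$ sending $\Phi \mapsto [x_1]$ and $\gamma \mapsto [x_1 x_3 + x_3 x_1]$, and then show it is an isomorphism by a bigraded dimension count. Well-definedness is immediate from three direct computations: $\beta(x_1) = 0$; $\beta(\gamma) = x_1 \cdot x_1^2 + x_1^2 \cdot x_1 = 0$ in characteristic two; and $\Phi^2 = x_1^2 = \beta(x_3)$ vanishes in homology.

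To count $\beta H_*$ in each bidegree, I will use that $\beta$ preserves the $x_1$-weight ($|x_1|_w = 1$, $|x_3|_w = 2$), which splits $(T_{\mathbb{Z}/2}[x_1, x_3], \beta) = \bigoplus_{w \geq 0} (C_w, \beta)$. A basis monomial of $C_w$ corresponds to a tiling of a row of $w$ cells by singletons ($=x_1$) and adjacent pairs ($=x_3$), equivalently to a subset $S \subseteq \{1, \ldots, w-1\}$ with no two consecutive elements, and $\beta$ sums over removing a single element of $S$. Hence $(C_w, \beta)$ is, after a shift by $w$ in homological degree, the augmented reduced chain complex of the independence complex $I(P_{w-1})$ of the path on $w-1$ vertices.

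It is a classical combinatorial fact, provable by induction on $m$ using the pushout $I(P_m) = I(P_{m-1}) \cup_{I(P_{m-2})} (\{m\} * I(P_{m-2}))$ (which yields the cofibre sequence $I(P_{m-1}) \hookrightarrow I(P_m) \to \Sigma I(P_{m-2})$), that $I(P_m)$ is contractible when $m \equiv 1 \pmod 3$ and homotopy equivalent to $S^{\lfloor (m-2)/3 \rfloor}$ otherwise. Translating back, $\beta H_{d,w}$ is $\mathbb{Z}/2$ exactly when $(d,w) = (4k, 3k)$ or $(4k+1, 3k+1)$ for some $k \geq 0$, and is zero elsewhere. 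These bidegrees match those of the monomial basis $\{\gamma^k, \Phi \gamma^k : k \geq 0\}$ of the domain, so $\phi$ is an isomorphism as soon as we verify that each $[\gamma^k]$ is nonzero in $\beta H_*$.

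This last verification is the main obstacle; the analogous statement for $[\Phi\gamma^k]$ will then follow by multiplying by $\Phi$ and using one-dimensionality. The base case $k=1$ is immediate: under the identification above, $\gamma$ corresponds to the chain $\{1\} + \{2\}$, which generates $\widetilde{H}_0(I(P_2)) = \mathbb{Z}/2$. For general $k$, I plan to expand $\gamma^k$ explicitly as a sum of $2^k$ basis simplices of size $k$ in $I(P_{3k-1})$ and show nontriviality inductively, either by pairing against an explicit cocycle dual to the sphere class or by verifying that $\gamma \cdot [\gamma^{k-1}]$ is not a boundary via the Mayer--Vietoris recursion above. An elegant alternative that bypasses this chain-level argument is to identify $(V, \beta)$ with the cobar construction dual to the augmented algebra $A = \mathbb{Z}/2[y]/(y^3)$, yielding $\beta H_* \cong \Ext^*_A(\mathbb{Z}/2, \mathbb{Z}/2)$, which is computed to be $\mathbb{Z}/2[\xi_1, \xi_2]/(\xi_1^2)$ via a periodic minimal resolution of $\mathbb{Z}/2$ over $A$ with alternating differentials $y, y^2, y, y^2, \ldots$, matching $\Phi \leftrightarrow \xi_1$ and $\gamma \leftrightarrow \xi_2$.
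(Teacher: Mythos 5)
Your proposal ultimately reaches a correct proof, but via the route you relegate to the final sentence rather than the one you develop at length. The combinatorial route (splitting by the weight with $|x_1|=1$, $|x_3|=2$ and identifying the weight-$w$ summand with the augmented chain complex of the independence complex of a path) does correctly compute the bigraded dimensions of $\beta H_*$, and these match $\mathbb{Z}/2[\Phi,\gamma]/(\Phi^2)$. But, as you acknowledge, it leaves the nonvanishing of $[\gamma^k]$ unproved, and two further points are glossed over: well-definedness of $\phi$ on the \emph{commutative} ring $\mathbb{Z}/2[\Phi,\gamma]/(\Phi^2)$ also requires $\Phi\gamma=\gamma\Phi$ in $\beta H_*$ (true because the commutator is $x_1^2x_3+x_3x_1^2=\beta(x_3^2)$, a check the paper makes explicitly), and ``multiplying by $\Phi$ and using one-dimensionality'' does not show $[\Phi\gamma^k]\neq 0$, since a nonzero class may well die under multiplication by $\Phi$. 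So the first four paragraphs do not stand on their own.

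The cobar alternative closes all of these gaps and is a genuinely different argument from the paper's. The coalgebra with $\bar\psi(x_1)=0$ and $\bar\psi(x_3)=x_1\otimes x_1$ does dualise to $\mathbb{Z}/2[y]/(y^3)$ (note this is \emph{not} the truncated divided power algebra $A_2$ appearing elsewhere in the paper, because the differential here is $\beta$ rather than $d_\text{model}$), the minimal resolution with alternating differentials $y,y^2$ gives $\Ext^*_{\mathbb{Z}/2[y]/(y^3)}(\mathbb{Z}/2,\mathbb{Z}/2)\cong\mathbb{Z}/2[\xi_1,\xi_2]/(\xi_1^2)$, and the matching $\gamma\leftrightarrow\xi_2$ is forced by one-dimensionality of the relevant bidegree. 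By contrast, the paper argues via the cofibre sequence of left multiplication by $x_1$, obtaining a long exact sequence
$$\cdots\to\beta H_{i-1}\xrightarrow{\ \Phi\cdot-\ }\beta H_i\xrightarrow{\ q\ }\beta H_{i-3}\xrightarrow{\ \Phi\cdot-\ }\beta H_{i-2}\to\cdots$$
in which the connecting map is again $\Phi\cdot-$ and $q(\gamma)=\Phi$; a single induction then delivers both the additive answer and the nonvanishing of every monomial $\gamma^k$, $\Phi\gamma^k$, leaving only $\Phi^2=0$ and $\Phi\gamma=\gamma\Phi$ to check by hand. Your $\Ext$ route buys a conceptual identification, consistent with the paper's own identification of $H_{*,*}(\CPL(2n;R,0))$ with $\Ext^*_{A_n}(R,R)$, at the cost of invoking the standard Yoneda-product computation for truncated polynomial algebras; the paper's route is more elementary and self-contained.
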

\begin{proof}
As $\beta(x_1)=0$, the map $x_1 \cdot -\colon T_{\mathbb{Z}/2}[x_1, x_3] \to T_{\mathbb{Z}/2}[x_1, x_3]$ commutes with $\beta$, i.e.~is a chain map with respect to this differential. It is injective, and its cokernel may be described as $\mathbb{Z}/2\{1\} \oplus x_3 \cdot T_{\mathbb{Z}/2}[x_1, x_3]$, so we have a short exact sequence $$T_{\mathbb{Z}/2}[x_1, x_3] \to T_{\mathbb{Z}/2}[x_1, x_3] \to \mathbb{Z}/2\{1\} \oplus x_3 \cdot T_{\mathbb{Z}/2}[x_1, x_3]$$
of right $(T_{\mathbb{Z}/2}[x_1, x_3], \beta)$-modules. The differential $\beta$ on the cokernel kills 1, and as $\beta(x_3 \cdot z) = x_1^2 \cdot z + x_3 \cdot \beta(z)$ with the first summand divisible on the left by $x_1$, it makes $x_3 \cdot T_{\mathbb{Z}/2}[x_1, x_3]$ into a chain complex which is the 3-fold suspension of $(T_{\mathbb{Z}/2}[x_1, x_3], \beta)$. Therefore, the corresponding long exact sequence on $\beta$-homology groups has the form
$$\cdots \overset{\partial} \to \beta H_{i-1} \overset{\Phi \cdot -}\to \beta H_i \overset{q}\to \beta H_{i-3} \overset{\partial}\to \beta H_{i-2} \overset{\Phi \cdot -}\to \beta H_{i-1} \overset{q}\to \cdots,$$
with a small correction around $i=0$ due to the unit. A diagram chase shows that the connecting map $\partial$ is also given by $\Phi \cdot -$, using the formula for $\beta(x_3\cdot z)$ above. All the maps are right $\beta H_*$-module maps. One checks that $q(\gamma) = x_1 = \Phi$.

Using this it is simple to show by induction on degree that the canonical map
$$\mathbb{Z}/2\{1, \Phi, \gamma, \Phi\gamma, \gamma^2, \Phi\gamma^2, \gamma^3, \Phi\gamma^3, \ldots\} \to \beta H_*$$
is an isomorphism. In the algebra structure on the target $\Phi^2=0$, because $\beta H_2=0$. To obtain the claimed algebra structure it remains to show that $\Phi\gamma = \gamma\Phi$. At the chain level these are represented by $x_1(x_1x_3 + x_3x_1)$ and $(x_1x_3 + x_3x_1)x_1$ respectively, whose difference is $x_1^2 x_3 + x_3 x_1^2 = \beta(x_3^2)$, so is trivial in $\beta$-homology.
\end{proof}

\begin{rerestate}{Corollary}{cor:MainCalculations}
{restating:cor:MainCalculations(e)}\mbox{}
\begin{enumerate}
\item[(e)]
\begin{enumerate}
    \item[(i)] $H_{*,*}(\CPL(4 ; \mathbb{Z}, 0))$ only has 2-power torsion, and it is entirely simple 2-torsion.
    \item[(ii)] Letting $\gamma := [x_1x_3 + x_3x_1] \in H_{4,3}(\CPL(4 ; \mathbb{Z}, 0))$, which is also the Massey product $\langle \Phi, 2\Phi, \Phi\rangle$, there is an isomorphism
    $$H_{*,*}(\CPL(4 ; \mathbb{Z}, 0))/\text{$2$-torsion} \cong \mathbb{Z}[\Phi, \gamma]/(\Phi^2)$$
    of bigraded algebras.
    \item[(iii)] The Poincar{\'e} series $f(t,s) := \sum_{i,j=0}^\infty \mathrm{dim}_{\mathbb{Z}/2} \text{tors} H_{i,j}(\CPL(4 ; \mathbb{Z}, 0)) t^i s^j$ for the ranks of the torsion subgroups is given by
    $$\frac{s^2 t^2}{(1-st - s^2 t^3 ) (1 - s^3 t^4)}$$
    $$\quad\quad = s^2 t^2 + s^3 t^3 + s^4 t^4 + s^4 (s + 1) t^5 + s^5 (s + 3) t^6 + s^6 (s + 4) t^7 + \cdots.$$
\end{enumerate}
\end{enumerate}
\end{rerestate}
\begin{proof}
We have $H_{*,*}(\CPL(4 ; \mathbb{Z}, 0)) \otimes \mathbb{Z}[\tfrac{1}{2}] = H_{*,*}(\CPL(4 ; \mathbb{Z}[\tfrac{1}{2}], 0))$  and \cref{cor:MainCalculations} part (c) applies to show that this is $\mathbb{Z}[\tfrac{1}{2}][\Phi, \alpha]/(\Phi^2)$. This is torsion-free, so $H_{*,*}(\CPL(4 ; \mathbb{Z}, 0))$ has no odd prime torsion.

If $C$ is a chain complex of finitely-generated free $\mathbb{Z}$-modules, and $\beta$ is the Bockstein operator on $H_*(C \otimes \mathbb{Z}/2)$, then $H_*(C)$ has only simple 2-torsion if and only if $\mathrm{dim}_{\mathbb{Z}/2} \beta H_n(C) = \mathrm{rk}_{\mathbb{Z}}  H_n(C)$ for all $n$ \cite[Corollary 3E.4]{HatcherAT}. The above calculation with $\mathbb{Z}[\tfrac{1}{2}]$-coefficients together with \cref{prop:BocksteinHomology} shows this is the case for $C=\CPL(4;\mathbb{Z},0)$. This proves (i).

As $d(x_3)=2x_1^2$, and 
$$d(x_3^2) = 2x_1^2 x_3 - 2x_3 x_1^2 = 2(x_1(x_1x_3 + x_3 x_1) - (x_1x_3 + x_3 x_1)x_1)=2(\Phi\gamma-\gamma\Phi),$$
it follows that $\Phi^2=0$ and $\Phi\gamma = \gamma\Phi$ modulo (2-)torsion. This defines a ring homomorphism $\rho\colon\mathbb{Z}[\Phi, \gamma]/(\Phi^2) \to H_{*,*}(\CPL(4 ; \mathbb{Z}, 0))/\text{$2$-torsion}$. On applying $- \otimes \mathbb{Z}[\tfrac{1}{2}]$ it becomes an isomorphism, using the calculation in the first paragraph and the fact that $2\alpha = \gamma$ (applying the case $3 = i = n+1$ of \cref{lemma: Massey} giving a formula for $m_{3} = \langle \Phi , \Phi , \Phi \rangle = \alpha$). To check that $\rho$ is an isomorphism, it remains to show that $\rho(\Phi\gamma^n)$ is not divisible by 2. For this we use that to a chain complex $C$ of finitely-generated free $\mathbb{Z}$-modules there is associated a map
$$b\colon H_*(C)/\text{$2$-torsion} \to \beta H_*(C)$$
by sending an integral homology class to its reduction mod 2, which is annihilated by $\beta$: it is elementary to verify that it is well-defined, and is an algebra map if $C$ is a dga. If $x \in H_n(C)$ is divisible by 2 modulo $2$-torsion, so $x = 2y \in H_n(C)/\text{$2$-torsion}$, then it follows that $b(x)= b(2y) = 2 b(y) = 0$. In our case the composition
$$\mathbb{Z}[\Phi, \gamma]/(\Phi^2) \overset{\rho}\to H_{*,*}(\CPL(4 ; \mathbb{Z}, 0))/\text{$2$-torsion} \overset{b}\to \beta H_* \cong \mathbb{Z}/2[\Phi, \gamma]/(\Phi^2)$$
sends $\Phi$ and $\gamma$ to elements of the same name, so $b\rho(\Phi \gamma^n) = \Phi \gamma^n \neq 0$ and so $\rho(\Phi \gamma^n)$ is not divisible by 2. This proves (ii).

Define the Poincar{\'e} series
\begin{align*}
g(t,s) &: = \sum_{i.j=0}^\infty \mathrm{rk}_{\mathbb{Z}} H_{i,j}(\CPL(4 ; \mathbb{Z}, 0)) t^i s^j\\
h(t,s) &:= \sum_{i,j=0}^\infty \mathrm{dim}_{\mathbb{Z}/2} H_{i,j}(\CPL(4 ; \mathbb{Z}/2, 0)) t^i s^j.
\end{align*}
The first may be determined by the ranks after inverting 2, so by the first paragraph it is
$g(t,s) = (1 + ts)/(1-t^4s^3)$. The second may be determined by $H_{*,*}(\CPL(4 ; \mathbb{Z}/2, 0)) = T_{\mathbb{Z}}[x_1, x_3]$ and the standard Poincar{\'e} series for a tensor algebra to be $h(t,s) = 1/(1- ts - t^3s^2)$.
It follows from the Universal Coefficient Theorem for $\Z/2$ that these Poincar{\'e} series are related to the one $f(t,s)$ for $\mathrm{dim}_{\mathbb{Z}/2} \text{tors}H_{*,*}(\CPL(4;\mathbb{Z},0))$ by $h(t,s) = g(t,s) + (1+t)f(t,s)$, as the 2-torsion is simple. Making these substitutions and rearranging proves (iii).
\end{proof}

These calculations have immediate consequences for $H_{*,*}(\CPL(4 ; \mathbb{Z}[a], a))$ using the method of \cref{sec:aNotZerodivisor}. We leave the spelling out of this to the interested reader.

\section{Cell modules and the complexes of innermost and outermost cups}\label{section: complexes}
The goal of the rest of the paper is to prove \cref{theorem: shift} and \cref{theorem:derived-outermost-cup-complex-resolution}.
To this end, in this section we will introduce three highly connected combinatorial complexes, and in the next section we will prove technical lemmas on the chain-level. 
In the final two sections we will use these complexes to prove the two theorems respectively: \cref{theorem: shift} is proved via a straightforward algebraic topology argument, and the proof of \cref{theorem:derived-outermost-cup-complex-resolution} is a more involved argument by building and manipulating derived versions of our combinatorial complexes.

We start by defining the modules which we would like to resolve.

\begin{definition}[Cell modules] 
\label{definition:cell-modules}
    The \emph{cell module} $\cell(2n,2i)$ is the left $\TL_{2n}$-module
	$$\cell(2n,2i)  := \TL(2n, 2i)/\{\text{diagrams having a right cup}\}.$$	
\end{definition}

That is, $\cell(2n,2i)$ has an $R$-module basis consisting of diagrams in which every right-hand node is connected to some left-hand node.

\begin{remark}
    \label{remark:cellmodule-0-i-is-trivial}
		If $i>n$ then any diagram in $\TL(2n, 2i)$ must have a right cup, so $\cell(2n,2i)=0$ in this case.
\end{remark}

\begin{remark}
    \label{remark:cell modules in rep theory}
    Temperley--Lieb algebras are examples of cellular algebras \cite{GrahamLehrer}, and the cell modules in \cref{definition:cell-modules} play an important role in the representation theory of $\TL_{2n}$. For example, it is known that if $R = \mathbb{C}$ and the parameter is $a=q+q^{-1}$ for $q$ not a root of unity, then $\TL_{2n}$ is semisimple and the collection
    $$\{ \cell(2n,2i) \ | \ 0 \leq i \leq n\}$$
    is a complete set of irreducibles \cite[Corollary 4.6]{RidoutStAubin}. Similar results hold for Temperley--Lieb algebras on an odd number of strands \cite{RidoutStAubin}.
\end{remark}

\subsection{Cup complexes}

In this section we introduce two complexes. We will describe them in terms of diagrams with additional data: certain arcs will be distinguished. We will call these arcs \emph{dashed}, and we draw them dashed.

\subsubsection{Innermost cups}
Our first complex is a resolution for the cell module $\cell(2n,2i)$, which in the case $i=n$ recovers \cite[Definition 8]{Sroka}. By an \emph{innermost cup} in a diagram refers we mean an arc which joins two consecutive nodes on the same side: we say \emph{left} innermost cup or \emph{right} innermost cup to distinguish the sides.

\begin{definition}[Complex of innermost cups]\label{defn:complex of innermost cups algebraic}
Let $\InnermostCupComplex_q(2n,2i)$ be the free $R$-module with basis the diagrams in $\TL(2n,2i)$ equipped with the additional data of $q$ dashed innermost right cups. This has the structure of a left $\TL_{2n}$-module by concatenating  diagrams, and keeping the same dashed innermost cups. The collection of these form a semi-simplicial $\TL_{2n}$-module, where the $i$-th face map un-dashes the $i$-th dashed innermost cup (the dashed cups are ordered from bottom to top).
The alternating sum of these face maps gives a differential
$$\delta_{q}\colon \InnermostCupComplex_{q}(2n, 2i) \to \InnermostCupComplex_{q-1}(2n, 2i).$$
This is the \emph{complex of innermost cups}, denoted $\InnermostCupComplex_*(2n,2i)$.

We let $\AugmentedInnermostCupComplex_*(2n,2i)$ be the augmented form of this chain complex obtained by setting 
$$\AugmentedInnermostCupComplex_{-1}(2n,2i) := \cell(2n, 2i),$$
and letting $\delta_0\colon \AugmentedInnermostCupComplex_{0}(2n, 2i) = \TL(2n, 2i) \to \AugmentedInnermostCupComplex_{-1}(2n, 2i) =  \cell(2n, 2i)$ be the quotient map used to define $\cell(2n, 2i)$ in \cref{definition:cell-modules}.
\end{definition}

\begin{figure}[ht]
    \begin{tikzpicture}[scale=0.4, baseline=(base)]
               \coordinate (base) at (0,3.25);
        \draw[gray,line width = 1](0,0.5)--(0,8.5);
        \draw[gray,line width = 1](6,0.5)--(6,8.5);

        \foreach \x in {1,2, 3,4,5,6,7,8}{
            \draw[fill=black]  (0,\x) circle [radius=0.15];
   
        } 

        \foreach \y in {1,2,3,4,5,6}{
            \draw[fill=black]  (6,\y + 1) circle [radius=0.15];
         
        } 

        \draw (0,1) to[out=0,in=180] (6,2);
        \draw (0,6) to[out=0,in=180] (6,5);

        \draw (0,3) to[out=0,in=-90] (1,3.5) to[out=90,in=0] (0,4);
        \draw (0,7) to[out=0,in=-90] (1,7.5) to[out=90,in=0] (0,8);
        \draw (0,2) to[out=0,in=-90] (2,3.5) to[out=90,in=0] (0,5);

        \draw[dashed] (6,3) to[out=180,in=-90] (5,3.5) to[out=90,in=180] (6,4);
        \draw[dashed] (6,6) to[out=180,in=-90] (5,6.5) to[out=90,in=180] (6,7);
    \end{tikzpicture}
    \quad
    $\mapsto$
    \quad
    \begin{tikzpicture}[scale=0.4, baseline=(base)]
               \coordinate (base) at (0,3.25);
        \draw[gray,line width = 1](0,0.5)--(0,8.5);
        \draw[gray,line width = 1](6,0.5)--(6,8.5);

        \foreach \x in {1,2, 3,4,5,6,7,8}{
            \draw[fill=black]  (0,\x) circle [radius=0.15];
   
        } 

        \foreach \y in {1,2,3,4,5,6}{
            \draw[fill=black]  (6,\y + 1) circle [radius=0.15];
         
        } 

        \draw (0,1) to[out=0,in=180] (6,2);
        \draw (0,6) to[out=0,in=180] (6,5);

        \draw (0,3) to[out=0,in=-90] (1,3.5) to[out=90,in=0] (0,4);
        \draw (0,7) to[out=0,in=-90] (1,7.5) to[out=90,in=0] (0,8);
        \draw (0,2) to[out=0,in=-90] (2,3.5) to[out=90,in=0] (0,5);

        \draw(6,3) to[out=180,in=-90] (5,3.5) to[out=90,in=180] (6,4);
        \draw[dashed] (6,6) to[out=180,in=-90] (5,6.5) to[out=90,in=180] (6,7);
    \end{tikzpicture}
    \quad
    $-$
    \quad
    \begin{tikzpicture}[scale=0.4, baseline=(base)]
               \coordinate (base) at (0,3.25);
        \draw[gray,line width = 1](0,0.5)--(0,8.5);
        \draw[gray,line width = 1](6,0.5)--(6,8.5);

        \foreach \x in {1,2, 3,4,5,6,7,8}{
            \draw[fill=black]  (0,\x) circle [radius=0.15];
   
        } 

        \foreach \y in {1,2,3,4,5,6}{
            \draw[fill=black]  (6,\y + 1) circle [radius=0.15];
         
        } 

        \draw (0,1) to[out=0,in=180] (6,2);
        \draw (0,6) to[out=0,in=180] (6,5);

        \draw (0,3) to[out=0,in=-90] (1,3.5) to[out=90,in=0] (0,4);
        \draw (0,7) to[out=0,in=-90] (1,7.5) to[out=90,in=0] (0,8);
        \draw (0,2) to[out=0,in=-90] (2,3.5) to[out=90,in=0] (0,5);

        \draw[dashed] (6,3) to[out=180,in=-90] (5,3.5) to[out=90,in=180] (6,4);
        \draw(6,6) to[out=180,in=-90] (5,6.5) to[out=90,in=180] (6,7);
    \end{tikzpicture}
    .
    \caption{The differential $\delta_2\colon\InnermostCupComplex_2(8,6) \to \InnermostCupComplex_1(8,6)$ of the complex of innermost cups; each face maps un-dashes a dashed cup.}  \label{fig: innermost cup complex}
\end{figure}

We will now explain how this complex is close to a projective resolution of $\cell(2n, 2i)$. To do so, we observe that the $\TL_{2n}$-modules $\InnermostCupComplex_{q}(2n, 2i)$ decompose as a direct sum indexed over the locations of the dashed innermost right cups. More precisely, for each collection $F$ of $q$ dashed innermost right cups, there is a summand $\operatorname{Cup}(F)(2n,2i)$
of $\InnermostCupComplex_{q}(2n, 2i)$ consisting of those diagrams whose associated dashed innermost right cups are precisely $F$. Writing $\mathcal{I}_{2i}^q$ for the set of such $F$'s, there is a $\TL_{2n}$-module decomposition
\begin{equation}\label{eq:InnDecomposedIntoCup}
\InnermostCupComplex_{q}(2n, 2i) = \bigoplus_{F \in \mathcal{I}^q_{2i}} \operatorname{Cup}(F)(2n,2i).
\end{equation}
The following two lemmas show that $\InnermostCupComplex_{*}(2n, 2i)$ is close to being a complex of projective left $\TL_{2n}$-modules.

\begin{lemma} \label{lemma:forgetting cups}
    The map 
		$$\operatorname{Cup}(F)(2n,2i) \to \TL(2n,2i-2|F|)$$ 
		which deletes all dashed innermost right cups is an isomorphism of left $\TL_{2n}$-modules.
\end{lemma}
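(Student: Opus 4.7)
The plan is to exhibit an inverse to the deletion map explicitly and verify that both maps are $\TL_{2n}$-equivariant. Concretely, each element $F \in \mathcal{I}^q_{2i}$ is a collection of $q$ pairwise-disjoint pairs of consecutive nodes on the right bar with $2i$ nodes, singling out $2q = 2|F|$ of those nodes. Deleting these $2|F|$ nodes leaves a right bar with $2i - 2|F|$ nodes, labelled in order by $1, \ldots, 2i - 2|F|$. Given a basis diagram $D \in \operatorname{Cup}(F)(2n, 2i)$, its dashed innermost right cups connect precisely the pairs of nodes specified by $F$; removing these cups (and their nodes) yields a well-defined basis diagram of $\TL(2n, 2i - 2|F|)$, since the remaining arcs are disjoint and connect the remaining nodes in pairs. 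Call this map $\pi_F$.

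Next I would define the candidate inverse $\iota_F\colon \TL(2n, 2i - 2|F|) \to \operatorname{Cup}(F)(2n, 2i)$ by inserting, at each of the $|F|$ specified locations, a small cup between the two new consecutive nodes, and marking each of these $|F|$ new cups as dashed. On basis elements, this is manifestly well-defined, and the inserted cups are automatically innermost right cups (they connect consecutive nodes and there is nothing between them). By construction, $\pi_F$ and $\iota_F$ are mutual inverses on basis elements: $\pi_F \circ \iota_F$ first inserts then deletes the same $|F|$ innermost cups, and $\iota_F \circ \pi_F$ deletes the specified $|F|$ innermost cups and then reinserts them in the same positions (which is possible precisely because every element of $\operatorname{Cup}(F)(2n, 2i)$ has its dashed innermost cups at the locations prescribed by $F$).

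Finally, both $\pi_F$ and $\iota_F$ are $\TL_{2n}$-equivariant: the left action of $\TL_{2n}$ is by composition on the left, which modifies only the connectivity of the diagram through the left-hand bar and can never create or destroy cups on the right-hand bar. Hence the arcs comprising the dashed innermost right cups, which lie in a neighbourhood of the right bar disjoint from the left bar, are untouched by this action. Both insertion and deletion thus commute with left multiplication by elements of $\TL_{2n}$, and the proof is complete.

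The main obstacle is purely bookkeeping, namely checking that the insertion at the prescribed pairs of consecutive nodes preserves planarity and the innermost property; this is immediate from the fact that $F$ consists of disjoint consecutive pairs, so no subtlety beyond unwinding definitions is required.
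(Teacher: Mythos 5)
Your proof is correct and follows essentially the same route as the paper: exhibit the insertion of dashed cups at the positions prescribed by $F$ as a two-sided inverse to deletion, and observe that neither operation interacts with the left bar. The paper's only streamlining is to recognise single-cup insertion as right multiplication by the (dashed) diagram $R_k$ of \cref{defn: diagrams Lk Rk and Lmax}, which makes left $\TL_{2n}$-linearity automatic rather than requiring your direct (and correct) equivariance check.
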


\begin{proof} The deletion map is by construction a map of $R$-modules, so we must verify that it is a $\TL_{2n}$-linear bijection. Multiplying by the diagram $R_{k}$ of \cref{defn: diagrams Lk Rk and Lmax} (with its single cup dashed) is inverse to the map which deletes the cup from $k$ to $k+1$. The map $-\cdot R_{k}$ is a right multiplication map and hence automatically left $\TL_{2n}$-linear. Deletion of all cups is a composite of deletions of single cups, and is therefore an isomorphism as required.\end{proof}

\begin{lemma}
    \label{lemma:left-projective-TL-modules}
    If $0 < i \leq n$ or $n = i =0$, the $\TL_{2n}$-module $\TL(2n,2i)$ can be identified with a retract of $\TL(2n,2n) = \TL_{2n}$ and is hence projective.
\end{lemma}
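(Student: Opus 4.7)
The case $n=i=0$ is immediate since then $\TL(2n,2i) = \TL(0,0) = \TL_0$ coincides with the ambient algebra. So I focus on $0 < i \leq n$. The strategy is to realise $\TL(2n,2i)$ as a retract of the rank-one free module $\TL_{2n}$ via right multiplication by explicit diagrams. For any $x \in \TL(2i, 2n)$ and $y \in \TL(2n, 2i)$, the maps $\iota := (- \cdot x) \colon \TL(2n, 2i) \to \TL_{2n}$ and $\pi := (- \cdot y) \colon \TL_{2n} \to \TL(2n, 2i)$ are left $\TL_{2n}$-linear by associativity of composition in $\TL$, and their composite $\pi \circ \iota$ is right multiplication by $xy \in \TL_{2i}$. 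Hence the task reduces to producing $x$ and $y$ with $xy = \id_{2i}$, the identity Temperley--Lieb diagram; projectivity then follows because $\TL_{2n}$ is free of rank one over itself.

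The step I expect to be the main obstacle is this combinatorial construction. A mirror-symmetric choice of $x$ and $y$ with matching adjacent cups on both sides fails, since the cups close up in the middle yielding $xy = a^{n-i} \cdot \id_{2i}$ rather than $\id_{2i}$. The remedy is to stagger the cup positions so that they interlock into a single zig-zag path. Concretely, take $x \in \TL(2i, 2n)$ with $2i$ through strands connecting node $k$ on the left to node $k$ on the right for $k = 1, \ldots, 2i$, together with $n-i$ right cups between nodes $(2i+1, 2i+2), (2i+3, 2i+4), \ldots, (2n-1, 2n)$ at the top. Take $y \in \TL(2n, 2i)$ with through strands from node $k$ on the left to node $k$ on the right for $k = 1, \ldots, 2i-1$, a top through strand from node $2n$ on the left to node $2i$ on the right, and $n-i$ left cups between nodes $(2i, 2i+1), (2i+2, 2i+3), \ldots, (2n-2, 2n-1)$ in the middle. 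Both diagrams are planar.

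To verify $xy = \id_{2i}$, I trace the $2i$ arcs through the composite. The aligned $2i-1$ bottom through strands of $x$ and $y$ produce $2i-1$ through arcs in $xy$. The remaining through strand of $x$ starting at node $2i$ on the left enters the middle bar at node $2i$; it is carried to node $2i+1$ by $y$'s first left cup, to $2i+2$ by $x$'s first right cup, and so on, alternating between the staggered cups of $y$ and $x$. This zig-zag visits every middle node $2i, 2i+1, \ldots, 2n$ exactly once and terminates at middle node $2n$, whence $y$'s top through strand carries it to node $2i$ on the right. Since every middle node lies on this single path, no loops are created and the composite is precisely the identity diagram. The retraction $\pi \circ \iota = \id$ is established, exhibiting $\TL(2n, 2i)$ as a direct summand of the free left $\TL_{2n}$-module $\TL_{2n}$, hence as a projective $\TL_{2n}$-module.
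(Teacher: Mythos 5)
Your construction is correct: the diagrams $x\in\TL(2i,2n)$ and $y\in\TL(2n,2i)$ you describe are planar, the staggered cups compose to a single embedded ``snake'' visiting each middle node $2i,\dots,2n$ exactly once, so $xy=\id_{2i}$ with no closed loops, and right multiplication by $x$ and $y$ then exhibits $\TL(2n,2i)$ as a left-module retract of the free rank-one module $\TL_{2n}$. The overall strategy --- a retraction implemented by right multiplication by explicit diagrams --- is the same as the paper's, but the route differs in how the retraction is produced. The paper first identifies $\TL(2n,2i)$ with the submodule $\operatorname{Cup}(F)(2n,2n)\subseteq\TL_{2n}$ of diagrams containing a prescribed set of $n-i$ innermost cups (via the ``forgetting cups'' isomorphism of \cref{lemma:forgetting cups}), and then cites the last author's earlier work for the fact that this cup module is split off by right multiplication by a specific idempotent $E\in\TL_{2n}$; the zig-zag trick you use is essentially what makes that idempotent work, but in the paper it is imported as a black box. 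Your version buys self-containedness and an explicit verification that the composite is exactly $\id_{2i}$ rather than $a^{n-i}\id_{2i}$ (the failure mode you correctly flag for the naive mirror-symmetric choice), at the cost of a somewhat longer diagram chase; the paper's version is shorter and reuses machinery (the cup modules) that is already set up for the complex of innermost cups.
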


\begin{proof} If $n=i=0$, the result is trivial. Using the forgetting cups map of \cref{lemma:forgetting cups}, $\TL(2n,2i)$ is identified with the cup module $\operatorname{Cup}(F)(2n,2n)$ for $F$ the set of innermost left cups joining each of $2i+1,2i+3, \dots, 2n-1$ to the node directly above them. When $i>0$, this latter cup module was shown to be a retract of $\TL_{2n}$ by right multiplication $- \cdot E\colon \TL_{2n} \twoheadrightarrow \operatorname{Cup}(F)(2n,2n)$ with a specific (idempotent) Temperley--Lieb diagram $E \in \TL_{2n}$ \cite[Proposition 13 and Corollary 15]{Sroka}.
\end{proof}

Putting these lemmas together gives the following corollary.

\begin{corollary}\label{cor:InnIsProj}
The left $\TL_{2n}$-module $\InnermostCupComplex_{q}(2n, 2i)$ is projective for $0 \leq q < i$.\qed
\end{corollary}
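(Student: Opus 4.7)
The plan is to assemble the corollary as a direct consequence of the two preceding lemmas together with the decomposition in equation \eqref{eq:InnDecomposedIntoCup}. There is nothing subtle to prove here; the work is just to verify that the hypotheses of the projectivity lemma are met on each summand.

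First, I would invoke the decomposition
\[
\InnermostCupComplex_{q}(2n, 2i) = \bigoplus_{F \in \mathcal{I}^q_{2i}} \operatorname{Cup}(F)(2n,2i)
\]
of left $\TL_{2n}$-modules. Then, for each $F$ with $|F|=q$, I would apply \cref{lemma:forgetting cups} to identify $\operatorname{Cup}(F)(2n, 2i)$ with $\TL(2n, 2(i-q))$ as a left $\TL_{2n}$-module, via the map that deletes the $q$ dashed innermost right cups.

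Next, to apply \cref{lemma:left-projective-TL-modules} to each summand, I need to check that the index $i' := i - q$ of $\TL(2n, 2i')$ satisfies $0 < i' \leq n$. The lower bound $i' > 0$ is immediate from $q < i$. For the upper bound, the relevant range is $i \leq n$ (the case of interest, since $\cell(2n,2i) = 0$ and $\InnermostCupComplex_q$ only resolves something nonzero here), in which case $i' = i - q \leq i \leq n$. So \cref{lemma:left-projective-TL-modules} gives that each summand $\operatorname{Cup}(F)(2n, 2i)$ is projective as a left $\TL_{2n}$-module. Finally, arbitrary direct sums of projective modules are projective, so $\InnermostCupComplex_q(2n, 2i)$ is projective.

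There is no genuine obstacle; the only thing to watch is the arithmetic on the range of $i-q$, and (if one wants the statement in full generality including $i > n$) noting that the forgetting-cups identification still reduces matters to the same question about $\TL(2n, 2i')$, which can be handled by a small separate argument or by observing that in the application of the corollary we are always in the range $i \leq n$.
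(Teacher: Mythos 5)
Your proof is correct and is exactly the paper's (unwritten) argument: the corollary is stated without proof precisely because it follows by combining the decomposition \eqref{eq:InnDecomposedIntoCup} with \cref{lemma:forgetting cups} and \cref{lemma:left-projective-TL-modules}. You are also right to flag the arithmetic on $i-q$: the statement implicitly carries the hypothesis $i \leq n$ (or $n=i=0$) inherited from \cref{lemma:left-projective-TL-modules}, which is the only regime in which the corollary is ever invoked (e.g.\ in \cref{lem:DInnAcyclic}), and for $i-q>n$ the module $\TL(2n,2(i-q))$ is not covered by that lemma and need not be projective.
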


\subsubsection{Submaximal cups}

Our second complex is a subcomplex of the complex of innermost cups, defined using the unique element $M \in \mathcal{I}_{2i}^i$, i.e.~the unique collection of dashed innermost cups of maximal cardinality $i$. We call a collection $F$ of dashed innermost cups \emph{submaximal} if it is obtained from $M$ by omitting some cups, or equivalently if every dashed innermost cup connecting $j<j+1$ has $j$ odd (and $j+1$ even).

\begin{definition}[Complex of submaximal cups] 
    \label{definition: complex of maximal innermost cups}
    Let $n \geq i$.
    The \emph{complex of submaximal cups} for the cell module $\cell(2n,2i)$ is the sub-chain-complex $\SubmaxInnermostCupComplex_*(2n,2i)$ of the complex of innermost cups $\InnermostCupComplex_*(2n,2i)$ on those diagrams having a submaximal set of dashed cups. It is therefore given in degree $q \geq 1$ by
    $$\SubmaxInnermostCupComplex_q(2n,2i) \coloneqq \bigoplus_{\substack{F \in \mathcal{I}_{2i}^q\\ \text{ submaximal}}} \operatorname{Cup}(F)(2m,2i)$$
    and in degree $q = 0$ it is given by
    $$\SubmaxInnermostCupComplex_0(2n,2 i) \coloneqq \im(\delta_1: \SubmaxInnermostCupComplex_1(2n, 2i) \to \InnermostCupComplex_0(n, 2i) = \TL(2n,2i))$$
    i.e.~$\SubmaxInnermostCupComplex_0(2n,2 i)$ is the submodule of $\TL(2n,2i)$ consisting of diagrams that share an innermost right cup with $M$.
\end{definition}

\subsubsection{Outermost cups}
The final combinatorial complex we define is built from the cell modules $S(2n,2i)$, and requires us to set the Temperley--Lieb parameter $a$ to be $0$. An \emph{outermost left cup} in a diagram in $\TL(2n,0)$ refers to an arc connecting nodes on the left hand side, which is not nested inside any other arc connecting nodes on the left hand side. In formulas, an arc connecting $i < j$ is outermost if there is no arc connecting $i' < j'$ for $i' < i$ and $j < j'$. Outermost left cups in a given diagram cannot be nested, so they totally ordered by the position of (either of) the nodes they join.

\begin{definition}[Complex of outermost cups]\label{defn:outermostcups}
Let $a=0$. Let $\OutermostCupComplex_q(2n)$ be the free $R$-module with basis the diagrams in $\TL(2n,0)$ equipped with $q$ dashed outermost left cups. This has the structure of a left $\TL_{2n}$-module by concatenating  diagrams, dashing every arc which has a dashed portion, and setting the result to zero if there are now fewer than $q$ dashed outermost left cups. The collection of these form a semi-simplicial $\TL_{2n}$-module, where the $i$-th face map un-dashes the $i$-th dashed outermost cup (the dashed cups are ordered from bottom to top). The alternating sum of these face maps defines a differential
$$\partial_{q}\colon \OutermostCupComplex_q(2n) \to \OutermostCupComplex_{q-1}(2n).$$
This is the	\emph{complex of outermost cups}, denoted $\OutermostCupComplex_*(2n)$.
\end{definition}

\begin{remark}
    The assumption that the parameter $a$ be zero in \cref{defn:outermostcups} is used only in the claim that $\OutermostCupComplex_*(2n)$ is a semi-simplicial $\TL_{2n}$-module: un-dashing the $i$-th dashed outermost cup is not a map of $\TL_{2n}$-modules if $a \neq 0$ (see \cref{remark:outermostcups-assumption-on-a} for additional details). In contrast, the $\TL_{2n}$-modules $\OutermostCupComplex_q(2n)$ can be defined for any parameter: our next \cref{lem:OutIsCell} shows that it is the cell-module $S(2n,2q)$ in \cref{definition:cell-modules}.
\end{remark}

\begin{figure}[ht]
    \begin{tikzpicture}[scale=0.4, baseline=(base)]
               \coordinate (base) at (0,3.25);
        \draw[gray,line width = 1](0,0.5)--(0,8.5);

        \foreach \x in {1,2, 3,4,5,6,7,8}{
            \draw[fill=black]  (0,\x) circle [radius=0.15];
   
        } 

        \draw (0,2) to[out=0,in=-90] (1,2.5) to[out=90,in=0] (0,3);
        \draw (0,4) to[out=0,in=-90] (1,4.5) to[out=90,in=0] (0,5);
        \draw[dashed] (0,7) to[out=0,in=-90] (1,7.5) to[out=90,in=0] (0,8);
        \draw[dashed] (0,1) to[out=0,in=-90] (3,3.5) to[out=90,in=0] (0,6);
    \end{tikzpicture}
    \quad
    $\mapsto$
    \quad
    \begin{tikzpicture}[scale=0.4, baseline=(base)]
               \coordinate (base) at (0,3.25);
        \draw[gray,line width = 1](0,0.5)--(0,8.5);

        \foreach \x in {1,2, 3,4,5,6,7,8}{
            \draw[fill=black]  (0,\x) circle [radius=0.15];
        } 

        \draw (0,2) to[out=0,in=-90] (1,2.5) to[out=90,in=0] (0,3);
        \draw (0,4) to[out=0,in=-90] (1,4.5) to[out=90,in=0] (0,5);
        \draw[dashed] (0,7) to[out=0,in=-90] (1,7.5) to[out=90,in=0] (0,8);
        \draw(0,1) to[out=0,in=-90] (3,3.5) to[out=90,in=0] (0,6);
    \end{tikzpicture}
    \quad
    $-$
    \quad
    \begin{tikzpicture}[scale=0.4, baseline=(base)]
               \coordinate (base) at (0,3.25);
        \draw[gray,line width = 1](0,0.5)--(0,8.5);
        
        \foreach \x in {1,2, 3,4,5,6,7,8}{
            \draw[fill=black]  (0,\x) circle [radius=0.15];
        } 

        \draw (0,2) to[out=0,in=-90] (1,2.5) to[out=90,in=0] (0,3);
        \draw (0,4) to[out=0,in=-90] (1,4.5) to[out=90,in=0] (0,5);
        \draw(0,7) to[out=0,in=-90] (1,7.5) to[out=90,in=0] (0,8);
        \draw[dashed] (0,1) to[out=0,in=-90] (3,3.5) to[out=90,in=0] (0,6);
    \end{tikzpicture}
    .
    \caption{The differential $\partial_2\colon\OutermostCupComplex_2(8) \to \OutermostCupComplex_1(8)$ of the complex of innermost cups; each face maps un-dashes a dashed cup.}  \label{fig: outermost cup complex}
\end{figure}

Consider the unique basis element $C_q \in \OutermostCupComplex_q(2q)$ (this is $\Lmax\in \TL(2q,0)$ with all cups dashed). Right multiplication with the diagram $C_q$ induces a ``close all cups'' left $\TL_{2n}$-module map
$$ - \cdot C_q \colon \TL(2n,2q) \to \OutermostCupComplex_q(2n).$$
\begin{lemma}\label{lem:OutIsCell}
The ``close all cups'' map $ - \cdot C_q$ induces left $\TL_{2n}$-module isomorphisms $ \cell(2n,2q)\cong \OutermostCupComplex_q(2n)$.
\end{lemma}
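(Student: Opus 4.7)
The plan is to verify that $- \cdot C_q$ factors through $\TL(2n,2q) \twoheadrightarrow \cell(2n,2q)$ and then to exhibit an explicit basis-to-basis inverse of the induced map, using planarity of Temperley--Lieb diagrams throughout.

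For the factoring, I would show that $D \cdot C_q = 0$ in $\OutermostCupComplex_q(2n)$ whenever $D$ has a right cup. Modeling the right boundary as a multigraph whose edges are the right cups of $D$ together with the $q$ cups $(2k-1,2k)$ of $C_q$, any right cup of $D$ either duplicates a $C_q$-cup (creating a cycle, hence a loop in the composite) or bridges two distinct $C_q$-cups (merging them into one longer arc). Either way the number of connected components containing a $C_q$-cup drops strictly below $q$, so $D \cdot C_q$ has fewer than $q$ dashed outermost arcs and is therefore zero in $\OutermostCupComplex_q(2n)$ by the very definition of the module structure there, regardless of the value of $a$.

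To show the induced map is a bijection on basis elements, I would first check that it lands in $\OutermostCupComplex_q(2n)$: for a basis diagram $D \in \cell(2n,2q)$, which has no right cups and hence $2q$ through-strands plus $n-q$ left cups, each of the $q$ arcs produced by joining a pair of through-strands at adjacent right nodes $2k-1, 2k$ via the $C_q$-cup $(2k-1,2k)$ is outermost in $D \cdot C_q$. This follows from planarity of $D$: distinct pairs of adjacent through-strands must have their left endpoints in the same cyclic order as their right endpoints, which rules out nesting among the dashed arcs, and any left cup of $D$ enclosing a single through-strand endpoint would force a crossing, so left cups of $D$ can only sit entirely inside or entirely outside a dashed arc. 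I would then define the inverse on basis elements by cutting the $q$ distinguished outermost cups of $E \in \OutermostCupComplex_q(2n)$ into pairs of through-strands, sending the $k$-th cup's lower and upper endpoints to new right nodes $2k-1$ and $2k$; since outermost cups are linearly ordered by position on the left boundary, this cutting procedure is unambiguous and produces a diagram in $\TL(2n,2q)$ with no right cups. Verifying that the two constructions are mutually inverse on basis elements is then a routine check: composing with $C_q$ reseals the cut cups to recover $E$, and cutting $D \cdot C_q$ recovers $D$ because the $k$-th dashed arc of $D \cdot C_q$ corresponds precisely to the pair of through-strands of $D$ at right nodes $2k-1, 2k$.

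I expect the main obstacle to be the planarity bookkeeping: justifying rigorously that the non-nesting of dashed arcs and the adjacency of through-strand endpoints follow from planarity of $D$, and that the ordering conventions used for the composition with $C_q$ and for the cutting procedure agree, so that cutting and reassembling genuinely yield a clean two-sided inverse.
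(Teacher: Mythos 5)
Your proposal is correct and follows essentially the same route as the paper: show that diagrams with a right cup are killed by $-\cdot C_q$ (the paper does this by a case analysis on an \emph{innermost} right cup, you by counting dashed components after adding an arbitrary right cup — the same observation), and then exhibit the inverse on basis elements by cutting open the dashed outermost cups. The planarity details you flag (order-preservation of through-strands, and that no through-strand endpoint can lie under a left cup) are exactly the routine checks the paper leaves implicit, and they go through as you describe.
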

\begin{proof}
If a diagram $D \in \TL(2n,2q)$ has a right cup, then it has an innermost right cup. In the diagram $D \cdot C_q$ either (i) this innermost right cup connected $2i+1$ to $2i+2$ so right multiplication by $C_q$ creates a dashed circle, and hence there are $<q$ dashed outermost arcs so it gives zero, or (ii) it connected $2i$ to $2i+1$ so right multiplication by $C_q$ joins two dashed arcs, and hence there are again $<q$ dashed outermost arcs so it again gives zero. It follows that this map descends to a left $\TL_{2n}$-module map
$$\cell(2n,2q) \to \OutermostCupComplex_q(2n).$$
This induces a bijection on diagram bases: the inverse is given by cutting open the dashed outermost left cups, and connecting to nodes on the right, which always gives diagrams having no right cups.
\end{proof}

An example is shown in \cref{fig: close cups}.

\begin{figure}[ht]
    \begin{tikzpicture}[scale=0.4, baseline=(base)]
               \coordinate (base) at (0,2.75);
        \draw[gray,line width = 1](0,0.5)--(0,6.5);
        \draw[gray,line width = 1](6,0.5)--(6,6.5);

        \foreach \x in {1,2, 3,4,5,6}{
            \draw[fill=black]  (0,\x) circle [radius=0.15];
        } 

        \foreach \y in {1,2,3,4}{
            \draw[fill=black]  (6,\y + 1) circle [radius=0.15];
    
        } 

        \draw (0,1) to[out=0,in=180] (6,2);
        \draw (0,4) to[out=0,in=180] (6,3);
        \draw (0,5) to[out=0,in=180] (6,4);
        \draw (0,6) to[out=0,in=180] (6,5);

        \draw (0,2) to[out=0,in=-90] (1,2 + 0.5) to[out=90,in=0] (0,2 + 1);
    \end{tikzpicture}
    \quad
    $\cdot$
    \quad
        \begin{tikzpicture}[scale=0.4, baseline=(base)]
              \coordinate (base) at (0,2.75);
        \draw[gray,line width = 1](0,0.5)--(0,6.5);

        \foreach \x in {2, 3,4,5}{
            \draw[fill=black]  (0,\x) circle [radius=0.15];
        } 

        \draw[dashed] (0,2) to[out=0,in=-90] (1,2.5) to[out=90,in=0] (0,3);
        \draw[dashed] (0,4) to[out=0,in=-90] (1,4.5) to[out=90,in=0] (0,5);
        
    \end{tikzpicture}
    \quad
    $=$
    \quad
    \begin{tikzpicture}[scale=0.4, baseline=(base)]
              \coordinate (base) at (0,2.75);
        \draw[gray,line width = 1](0,0.5)--(0,6.5);

        \foreach \x in {1,2, 3,4,5,6}{
            \draw[fill=black]  (0,\x) circle [radius=0.15];
        }

        \draw[dashed] (0,5) to[out=0,in=-90] (1,5.5) to[out=90,in=0] (0,6);
        \draw[dashed] (0,1) to[out=0,in=-90] (2,2.5) to[out=90,in=0] (0,4);

        \draw (0,2) to[out=0,in=-90] (1,2 + 0.5) to[out=90,in=0] (0,2 + 1);
    \end{tikzpicture}
    \quad
    $\in \OutermostCupComplex_2(6)$.
    \caption{The ``close all cups'' map $\cell(6,4) \to \OutermostCupComplex_2(6)$, which is multiplication by the diagram $C$ (dashed $\Lmax$) of \cref{lem:OutIsCell}.}  \label{fig: close cups}
\end{figure}

\begin{remark} \label{remark:outermost in zero row}
    $\OutermostCupComplex_*(0)$ vanishes in positive degrees, and in degree 0 it is a free $R$-module of rank 1: $S(0,0) \cong \TL(0,0) = R \{\emptyset\}$.
\end{remark}

\begin{remark}
    \label{remark:outermostcups-assumption-on-a}
    For any parameter $a$, the morphism modules $ \TL(2n, 2q)$ for $0 \leq q \leq n$ assemble to a semi-simplicial $\TL_{2n}$-module $\TL(2n,2*)$ with $i$-th face map given by $d_i\colon \TL(2n,2q) \to \TL(2n,2q-2)\colon A \mapsto A \cdot L_{2i+1}$. We have seen in \cref{lem:OutIsCell} that the map $- \cdot C_q \colon \TL(2n,2q) \twoheadrightarrow \OutermostCupComplex_q(2n)$ is a surjection with kernel the submodule spanned by diagrams that have a cup on the right. If $a = 0$ (as in \cref{defn:outermostcups}), then this submodule is closed under the face maps, so the semisimplicial structure descends to give the desired semisimplicial structure on $\OutermostCupComplex_q(2n)$.
\end{remark}

\subsection{Connectivity results} \label{subsection: MV complexes}
 In this section we will prove that the complexes introduced above are acyclic, using the `Mayer--Vietoris' language of \cite[Section 2]{BoydeTwo}. We will briefly recall the general set up and then apply it to our complexes.

Let $A$ be an $R$-algebra, and let $N \subset M$ be $A$-modules. Let $N_1, \dots, N_w$ be submodules of $N$ which cover $N$ in the sense that $N_1 + \dots + N_w = N$. To this data we may associate a (\emph{Mayer--Vietoris}) chain complex $\widetilde{\MayerVie}_*$, by setting $\widetilde{\MayerVie}_{-1} = M/N$, $\widetilde{\MayerVie}_0 = M$, and
$$\widetilde{\MayerVie}_p = \bigoplus_{\substack{S \subset \underline{w} \\ |S| = p}} \bigcap_{i \in S} N_i$$
for $p = 1, \dots , w$. The differential $d_p\colon\widetilde{\MayerVie}_p \to \widetilde{\MayerVie}_{p-1}$ is a sum of inclusions with signs: on $\bigcap_{i \in S} N_i \subset \MayerVie_p$ it is given by the formula
\begin{align*}
    \bigcap_{i \in S} N_i & \to \bigoplus_{j \in S} \bigcap_{i \in S \setminus j} N_i \\
    x & \mapsto \oplus_{j \in S} (-1)^{\gamma_S(j)} x,
\end{align*}
where $\gamma_S(j) = |\{i \in S \mid i < j \}|$. That this defines a chain complex ($d^2 = 0$) is verified as \cite[Lemma 2.1]{BoydeTwo}. The key result is the following.

\begin{lemma}[{\cite[Lemma 2.2]{BoydeTwo}}]
    \label{lemma: MV acyclicity} If $N$ is free as an $R$-module on some basis $\mathcal{B}$, such that the $N_i$ are free submodules on subsets of $\mathcal{B}$, then the Mayer--Vietoris complex $\widetilde{\MayerVie}_*$ is acyclic. \qed
\end{lemma}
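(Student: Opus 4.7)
The plan is to decompose $\widetilde{\MayerVie}_*$ into augmented simplicial chain complexes of non-empty simplices, one for each basis element of $\mathcal{B}$, and then invoke the contractibility of simplices. I would proceed in two stages: first reduce to a subcomplex whose degree $-1$ piece vanishes, then decompose that subcomplex according to $\mathcal{B}$.

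For the first stage, the short exact sequence of $R$-modules $0 \to N \to M \to M/N \to 0$ induces a short exact sequence of chain complexes $0 \to C_* \to \widetilde{\MayerVie}_* \to E_* \to 0$, where $C_*$ agrees with $\widetilde{\MayerVie}_*$ in degrees $\geq 1$, has $C_0 = N$ and $C_{-1} = 0$, while $E_*$ is the two-term complex $M/N \xrightarrow{\id} M/N$ placed in degrees $0$ and $-1$. Since $E_*$ is tautologically acyclic, it suffices to prove that $C_*$ is acyclic. Note that $d_1\colon \bigoplus_i N_i \to M$ does land in $C_0 = N \subseteq M$, so this reduction is well-defined.

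For the second stage, for each $b \in \mathcal{B}$, set $S_b := \{i \in \{1, \dots, w\} : b \in N_i\}$. The hypothesis says each $N_i$ is free on some subset $\mathcal{B}_i \subseteq \mathcal{B}$, so $\bigcap_{i \in S} N_i$ is free on $\{b : S \subseteq S_b\}$, and the differential (being a signed sum of inclusions of basis elements) preserves the $\mathcal{B}$-grading. This yields a chain complex decomposition $C_* = \bigoplus_{b \in \mathcal{B}} C_*^{(b)}$ with $C_0^{(b)} = R\{b\}$ and $C_p^{(b)}$ free on the size-$p$ subsets of $S_b$ for $p \geq 1$. Moreover, since $\sum_i N_i = N$ forces $\bigcup_i \mathcal{B}_i = \mathcal{B}$, each $S_b$ is non-empty.

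Finally, I would verify that the signs $(-1)^{\gamma_S(j)}$ are precisely the standard simplicial boundary signs, so $C_*^{(b)}$ is canonically identified with $R\{b\}$ tensored with the augmented simplicial chain complex of the simplex on vertex set $S_b$. Since $S_b$ is non-empty, this simplex is contractible, so $C_*^{(b)}$ is acyclic. Summing over $\mathcal{B}$ gives acyclicity of $C_*$, and hence of $\widetilde{\MayerVie}_*$. No step is a serious obstacle; the only real care needed is in bookkeeping signs, but the definition of $\gamma_S(j)$ is arranged exactly to match the simplicial convention.
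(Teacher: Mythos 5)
Your argument is correct. Note that the paper does not actually prove this lemma --- it is quoted from \cite[Lemma 2.2]{BoydeTwo} with no proof given here --- and your decomposition of $\widetilde{\MayerVie}_*$ along the basis $\mathcal{B}$ into (augmented) simplicial chain complexes of the nonempty simplices on the vertex sets $S_b$ is exactly the standard argument for this statement, so there is nothing to add.
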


We now apply this set-up to our complexes.

\begin{proposition}[{c.f.~\cite[Theorem 10]{Sroka}}]
    \label{proposition:complexes-of-innermost-cups-are-acyclic}
     $\AugmentedInnermostCupComplex_*(2n, 2i)$ is acyclic.
\end{proposition}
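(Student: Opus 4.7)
The plan is to apply the Mayer--Vietoris acyclicity lemma (Lemma 2.2 of \cite{BoydeTwo}, quoted above as \cref{lemma: MV acyclicity}) to a suitable covering. Take $M := \TL(2n,2i)$ and let $N \subset M$ be the submodule spanned by those diagrams which have at least one right cup, so that $M/N = \cell(2n,2i) = \AugmentedInnermostCupComplex_{-1}(2n,2i)$. For each $j \in \{1, \ldots, 2i-1\}$, let $N_j \subset N$ be the $R$-submodule spanned by those diagrams having an innermost right cup connecting the nodes $j$ and $j+1$. Since any right cup contains an innermost one, we have the covering $N = N_1 + N_2 + \cdots + N_{2i-1}$.

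The next step is to identify the resulting Mayer--Vietoris complex $\widetilde{\MayerVie}_*$ with $\AugmentedInnermostCupComplex_*(2n,2i)$. For a subset $S \subset \{1, \ldots, 2i-1\}$, the intersection $\bigcap_{j \in S} N_j$ is spanned by diagrams that simultaneously have innermost right cups at all positions in $S$; this is zero whenever $S$ contains two adjacent positions, and otherwise coincides (under the evident identification that forgets the dashing) with the summand $\operatorname{Cup}(S)(2n,2i)$ of \cref{eq:InnDecomposedIntoCup}. Summing over $|S| = p$ therefore yields $\InnermostCupComplex_p(2n, 2i)$. The differential in $\widetilde{\MayerVie}_*$ sums inclusions $\bigcap_{j \in S} N_j \hookrightarrow \bigcap_{j \in S \setminus \{j'\}} N_j$ with sign $(-1)^{\gamma_S(j')}$, which exactly matches the alternating sum of un-dashing face maps, since the $k$-th dashed cup (counted from the bottom) is the position $j' \in S$ with $\gamma_S(j') = k$. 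The bottom of $\widetilde{\MayerVie}_*$ agrees with the augmentation $\TL(2n,2i) \twoheadrightarrow \cell(2n,2i)$ by construction.

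The hypotheses of \cref{lemma: MV acyclicity} are then immediate: $N$ is free on the subset of the diagram basis of $\TL(2n,2i)$ consisting of those with a right cup, and each $N_j$ is free on the further subset of diagrams with an innermost right cup connecting $j$ to $j+1$. Hence $\widetilde{\MayerVie}_* \cong \AugmentedInnermostCupComplex_*(2n,2i)$ is acyclic.

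The only real bookkeeping is verifying the identification of the differentials and the sign conventions between the Mayer--Vietoris setup and the semi-simplicial face maps of \cref{defn:complex of innermost cups algebraic}; this is routine. The conceptual content is entirely contained in the decomposition \cref{eq:InnDecomposedIntoCup} together with the key combinatorial fact that any diagram possessing a right cup must possess an innermost one, which is what makes $\{N_j\}$ a cover of $N$.
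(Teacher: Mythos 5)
Your proof is correct and follows essentially the same route as the paper: both identify $\AugmentedInnermostCupComplex_*(2n,2i)$ with the Mayer--Vietoris complex of the cover of $N=\ker(\delta_0)$ by the submodules of diagrams having an innermost right cup at a fixed position, and then invoke \cref{lemma: MV acyclicity}. Your description of when the intersections vanish (two adjacent positions in $S$) is just a rephrasing of the paper's condition that $S$ be the set of lowest nodes of a collection of innermost right cups, so there is nothing to add.
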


\begin{proof}
This complex is the Mayer--Vietoris complex associated to the data given by setting $A = \TL_{2n}$, $M = \TL(2n,2i)$, $N = \im(\delta_1) = \ker(\delta_0)$, and $N_k = \operatorname{Cup}(F_k)(2n,2i)$ for $1 \leq k \leq 2i-1$, where $F_k$ is the single dashed innermost right cup joining $k$ to $k+1$. It is clear that the $N_k$ cover $N$, and to see the identification on the chain level, note that an intersection $\bigcap_{k \in S} N_k$ is zero unless $S$ is the set of lowest nodes of a collection $F$ of innermost right cups, in which case $\bigcap_{k \in S} N_k = \operatorname{Cup}(F)(2n,2i)$. This complex satisfies the hypothesis of \cref{lemma: MV acyclicity} by construction, and the result follows.
\end{proof}

\begin{remark}
    \label{remark:innermostcupcomplex-0-i-is-acyclic}
    By \cref{remark:cellmodule-0-i-is-trivial}, $\AugmentedInnermostCupComplex_*(0,2i) = \InnermostCupComplex_*(0,2i)$, so \cref{proposition:complexes-of-innermost-cups-are-acyclic} also shows that $\InnermostCupComplex_*(0,2i)$ is acyclic.
\end{remark}

\begin{proposition}
    \label{lemma:outermostcupcomplex-acyclic}
    $\OutermostCupComplex_*(2n)$ is acyclic for $n \geq 1$.
\end{proposition}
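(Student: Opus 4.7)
My plan is to realize $\OutermostCupComplex_*(2n)$ as a Mayer--Vietoris complex in the sense of \cref{subsection: MV complexes} and then invoke \cref{lemma: MV acyclicity}. Set $M := \TL(2n, 0) = \OutermostCupComplex_0(2n)$, take the index set $\underline{w}$ to be the pairs $\alpha = (j, j')$ with $1 \leq j < j' \leq 2n$, ordered lexicographically, and let $N_\alpha \subseteq M$ be the $R$-submodule spanned by those Temperley--Lieb diagrams in $\TL(2n,0)$ in which the arc $\{j, j'\}$ appears as an outermost left cup. By construction $M$ is free on the diagram basis and each $N_\alpha$ is free on a sub-basis, so the hypothesis of \cref{lemma: MV acyclicity} is automatic.

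For a subset $F \subseteq \underline{w}$, the intersection $\bigcap_{\alpha \in F} N_\alpha$ is spanned by those diagrams having every arc in $F$ as an outermost left cup. Since the outermost left cups of any fixed diagram $D$ are pairwise non-nested and therefore totally ordered by their lower endpoint, a basis element of this intersection is exactly the data of a pair $(D, F)$ with $F$ a subset of the outermost cups of $D$ of cardinality $|F|$ -- precisely the basis of $\OutermostCupComplex_{|F|}(2n)$. Two distinct outermost cups of the same diagram have distinct lower endpoints, so the lexicographic order on $\underline{w}$ restricts to the bottom-to-top order on the outermost cups of any fixed diagram; this identifies the Mayer--Vietoris sign $(-1)^{\gamma_S(j)}$ with the sign of the $i$-th face map in $\OutermostCupComplex_*(2n)$. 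Together these observations identify the Mayer--Vietoris complex $\widetilde{\MayerVie}_*$ with $\OutermostCupComplex_*(2n)$, except for its degree $-1$ term $\widetilde{\MayerVie}_{-1} = M/N$.

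The hypothesis $n \geq 1$ enters exactly at this point: since $n \geq 1$, every diagram in $\TL(2n, 0)$ has at least one outermost left cup, so $N := \sum_{\alpha} N_\alpha = M$ and $\widetilde{\MayerVie}_{-1} = 0$. Acyclicity of $\widetilde{\MayerVie}_*$ supplied by \cref{lemma: MV acyclicity} then gives the acyclicity of $\OutermostCupComplex_*(2n)$. I do not foresee a real obstacle in executing this plan; the only somewhat technical point is the sign check identifying the Mayer--Vietoris differential with the un-dashing differential.
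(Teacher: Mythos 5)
Your proposal is correct and follows essentially the same route as the paper: both realise $\OutermostCupComplex_*(2n)$ as the Mayer--Vietoris complex for $M=N=\TL(2n,0)$ covered by the submodules of diagrams containing a fixed outermost left cup, use $n\geq 1$ to see the cover condition, and conclude via \cref{lemma: MV acyclicity}. The only cosmetic difference is that you index over all pairs $(j,j')$ (some of which give the zero submodule, harmlessly) and spell out the sign check, whereas the paper indexes only over realisable single cups and leaves the signs implicit.
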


\begin{proof} 
For this argument we forget the $\TL_{2n}$-module structure. For a collection $F$ of $q$ outermost left cups, let $\OutermostCupComplex_q(2n)^F$ be the $R$-submodule of $\OutermostCupComplex_q(2n)$ consisting of those diagrams whose associated dashed outermost left cups are precisely $F$. Writing $\mathcal{O}^q_{2n}$ for the set of such $F$'s, there is an $R$-module decomposition
$$\OutermostCupComplex_q(2n) = \bigoplus_{F \in \mathcal{O}^q_{2n}} \OutermostCupComplex_q(2n)^F.$$

Then $\OutermostCupComplex_*(2n)$ is the Mayer--Vietoris complex associated to $N = M = \TL(2n,0) = \cell(2n,0)$ and the collection of submodules $N_{C} = \OutermostCupComplex_1(2n)^{C}$ with $C \in \mathcal{O}^1_{2n}$ a single left cup. Since $n \geq 1$, every diagram in $\TL(2n,0)$ has at least one outermost cup, so this collection is indeed a cover. To finish the identification with the Mayer--Vietoris complex we note the intersection $\bigcap_{C \in S} N_C$ is zero unless $F := \bigcup_{C \in S} C$ can be realised as outermost cups in some planar diagram, in which case $\bigcap_{C \in S} N_C = \OutermostCupComplex_{|S|}(2n)^F$. This complex satisfies the hypothesis of \cref{lemma: MV acyclicity} by construction, and the result follows. 
 \end{proof}

 \begin{remark}
    \label{remark:submaximal-is-mayer-vietoris}
     We remark that the complex of submaximal cups $\SubmaxInnermostCupComplex_*(2n,2i)$ is also a Mayer--Vietoris complex with $M = N = \SubmaxInnermostCupComplex_0(2n, 2i) \subseteq \TL(2n,2i)$ and hence acyclic. However, we shall not need this.
 \end{remark}

\section{Interactions between our complexes}

Recall that to define the outermost cup complex, we assumed that the Temperley--Lieb parameter $a=0$. This assumption will hold for this section.

Our motivation is as follows: the complex of outermost cups $\OutermostCupComplex_*(2n)$ is an acyclic complex of left $\TL_n$-modules:
$$S(2n,0) \cong \OutermostCupComplex_{0}(2n) \twoheadleftarrow \OutermostCupComplex_{1}(2n) \leftarrow \OutermostCupComplex_{2}(2n) \leftarrow \dots \leftarrow \OutermostCupComplex_{n}(2n) \cong R.$$ We would like to use this to resolve $S(2n,0)$, but its terms are not projective. However the acyclic complex $\AugmentedInnermostCupComplex_*(2n,2i)$ resolves $S(2n,2i) \cong \OutermostCupComplex_{i}(2n)$ (\cref{lem:OutIsCell}) and by \cref{lemma:left-projective-TL-modules} the terms $\TL(2n,2i)$ in this complex are projective when $0 < i \leq n$ or $n = i =0$.

The aim of this section is to fit these complexes together by extending the differential $\partial_i\colon\OutermostCupComplex_i(2n) \to \OutermostCupComplex_{i-1}(2n)$ of $\OutermostCupComplex_*(2n)$ to a map of chain complexes $\InnermostCupComplex_*(2n, 2i) \to \InnermostCupComplex_*(2n, 2i-2).$ Since $\OutermostCupComplex_*(2n)$ is a semisimplicial module, it suffices to show that each face map $d_k\colon \OutermostCupComplex_i(2n) \to \OutermostCupComplex_{i-1}(2n)$ making up $\partial_i$ can likewise be extended to a map \[
    \widetilde{d_k} \colon \InnermostCupComplex_*(2n, 2i) \to \InnermostCupComplex_*(2n, 2i-2).
\]

Recall that $d_k$ forgets the $k$-th dashed outermost cup, or, under the identification $S(2n,2i) \cong \OutermostCupComplex_{i}(2n)$ of \cref{lem:OutIsCell}, multiplies a diagram $D$ on the right by $L_{2k+1}$.

To solve the lifting problems, we will use the semisimplicial structure on $\InnermostCupComplex_*(2n, 2i)$, forgetting the one on $\OutermostCupComplex_*(2n)$, so for the rest of this section we suggest that the reader just think of $d_k$ as a map of $\TL_{2n}$-modules, forgetting that it is part of a semisimplicial structure.

It is easy to define {some} extension; the point is to construct one that can be used for {explicit} calculations. Recall in particular that \cref{lem:OutIsCell} already gives us a commutative diagram (c.f.~\cref{remark:outermostcups-assumption-on-a}):
\begin{center}
\begin{tikzcd}
    \InnermostCupComplex_0(2n,2i) \arrow[r, equal] \arrow[d] & \TL(2n,2i) \arrow[r, two heads] \arrow[d, "d_k\colon D \mapsto D \cdot L_{2k+1}"] & \OutermostCupComplex_i(2n) \arrow[d, "d_k"]\\
    \InnermostCupComplex_0(2n,2i-2) \arrow[r, equal] & \TL(2n,2i-2) \arrow[r, two heads] & \OutermostCupComplex_{i-1}(2n)
\end{tikzcd}
\end{center}
and hence a definition of the map $\widetilde{d_k}$ in degree zero. To construct $\widetilde{d_k}$ in higher degrees, we have to think about the effect of left multiplication by $L_{2k+1}$ on diagrams with marked innermost right cups. In general this is complicated. However, for our purposes it will suffice to understand this when the the marked cups are submaximal, in which case it is not so bad.

We will therefore define an \emph{explicit} extension on the complex of submaximal cups, and then argue that this extends to the whole innermost cup complex, without being explicit about the extension.

Recall from \cref{definition: complex of maximal innermost cups} that the complex of submaximal cups $\SubmaxInnermostCupComplex_*(2n,2i)$ is the subcomplex of $\InnermostCupComplex_*(2n,2i)$ consisting of diagrams where every dashed innermost cup connecting $j<j+1$ has $j$ odd even.

The observation which tells us how to define $\widetilde{d_k}$ on this subcomplex is the following.

\begin{lemma} \label{lem: multiplying marked guys by L2k+1}
    Let $F$ be a set of dashed innermost cups, and let $D$ be a diagram in $\TL(2n, 2i)$ having all cups from $F$.
    \begin{enumerate}
        \item If $F$ contains the cup connecting $2k+1$ to $2k+2$, then $D \cdot L_{2k+1} = 0 \in \TL(2n,2i-2)$.
        \item If $F$ is submaximal and does not contain this cup, then $D \cdot L_{2k+1}$ has all cups in the set $F(2k+1)$, where $F(2k+1)$ denotes the set of innermost right cups (now in $\TL(2n,2i-2)$) obtained from $F$ by deleting the nodes $2k+1$ and $2k+2$ on the right (and renumbering accordingly).
    \end{enumerate}
\end{lemma}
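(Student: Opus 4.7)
The plan is to analyse $D \cdot L_{2k+1}$ by direct geometric inspection, using the concrete description of $L_{2k+1} \in \TL(2i, 2i-2)$: it is the diagram with a single left cup joining its left-nodes $2k+1$ and $2k+2$, together with straight strands connecting each remaining left-node $j$ to the right-node $j$ if $j < 2k+1$ and to the right-node $j-2$ if $j > 2k+2$. Composition glues the right-nodes of $D$ to the left-nodes of $L_{2k+1}$.

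For part (a), the argument will be immediate: since $F$ contains the cup at positions $(2k+1, 2k+2)$, the right innermost cup of $D$ at these nodes is glued directly to the left cup of $L_{2k+1}$, producing a closed loop. Because we have set the parameter $a = 0$, the composition evaluates to $0 \in \TL(2n, 2i-2)$.

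For part (b), the plan is to use submaximality to conclude that every cup in $F$ is of the form $(2j+1, 2j+2)$ with $j \neq k$, so that no cup in $F$ involves the nodes $2k+1$ or $2k+2$. Since these two nodes are precisely the ones affected by the cup of $L_{2k+1}$, the cups of $F$ pass through the composition unchanged apart from a relabelling of right-nodes: a cup $(2j+1, 2j+2) \in F$ with $j < k$ persists at the same position, while one with $j > k$ becomes $(2j-1, 2j)$. Both are still innermost, since removing the two nodes $2k+1, 2k+2$ preserves consecutiveness of the remaining pairs, and these are exactly the elements of $F(2k+1)$. The main thing to be careful about is explaining why submaximality is the right hypothesis: without it, a cup of $F$ ending at node $2k$ or $2k+3$ would be joined via the cup of $L_{2k+1}$ to whichever arc of $D$ meets $2k+1$ or $2k+2$, and the resulting arc need not remain a right cup, much less an innermost one. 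Submaximality rules this out at a stroke and is what makes the explicit extension $\widetilde{d_k}$ workable on the submaximal subcomplex $\SubmaxInnermostCupComplex_*$.
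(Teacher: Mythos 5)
Your proposal is correct and follows essentially the same route as the paper's (much terser) proof: part (a) is exactly the closed-loop-times-$a=0$ observation, and part (b) is exactly the observation that submaximality plus the exclusion of the $(2k+1,2k+2)$ cup forces every cup of $F$ to avoid the nodes $2k+1$ and $2k+2$, so the cups survive composition with $L_{2k+1}$ up to renumbering. Your added discussion of why submaximality is the right hypothesis is a sensible elaboration but not a different argument.
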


\begin{proof} Part $(1)$ is because $a=0$. Part $(2)$ is because a submaximal set not including the cup connecting $2k+1$ to $2k+2$ cannot include any cups involving $2k+1$ or $2k+2$, so the marked cups are preserved by the multiplication (\cref{fig: order-preserving correspondence}). \end{proof}

\begin{definition} \label{def: face map lift to submax} Let $$\widetilde{d_k}\colon \SubmaxInnermostCupComplex_*(2n, 2i) \to \SubmaxInnermostCupComplex_*(2n, 2i-2)$$ be the map of chain complexes which is:
\begin{enumerate}
    \item zero on diagrams having a dashed cup from $2k+1$ to $2k+2$, and
    \item acts on diagrams $D$ with set of dashed cups $F$ not containing the cup from $2k+1$ to $2k+2$ as $D \mapsto D \cdot L_{2k+1}$, where the latter is regarded as having dashed cups indexed by the set $F(2k+1)$ of \cref{lem: multiplying marked guys by L2k+1} (see \cref{fig: order-preserving correspondence}).
\end{enumerate}

\end{definition}

To see that this is indeed a map of chain complexes, note that by \cref{lem: multiplying marked guys by L2k+1} it respects the face maps (which are given by forgetting dashed cups). Note that it agrees with the extension already constructed in degree 0.

\begin{figure}[ht]
    \begin{tikzpicture}[scale=0.4, baseline=(base)]
               \coordinate (base) at (0,3.25);
        \draw[gray,line width = 1](0,0.5)--(0,8.5);
        \draw[gray,line width = 1](6,0.5)--(6,8.5);

        \foreach \x in {1,2, 3,4,5,6,7,8}{
            \draw[fill=black]  (0,\x) circle [radius=0.15];
        } 

        \foreach \y in {1,2,3,4,5,6}{
            \draw[fill=black]  (6,\y + 1) circle [radius=0.15];
        } 

        \draw (0,1) to[out=0,in=180] (6,4);
        \draw (0,6) to[out=0,in=180] (6,5);

        \draw (0,7) to[out=0,in=-90] (1,7.5) to[out=90,in=0] (0,8);
        \draw (0,2) to[out=0,in=-90] (1,2.5) to[out=90,in=0] (0,3);
        \draw (0,4) to[out=0,in=-90] (1,4.5) to[out=90,in=0] (0,5);

        \draw[dashed] (6,2) to[out=180,in=-90] (5,2.5) to[out=90,in=180] (6,3);
        \draw[dashed] (6,6) to[out=180,in=-90] (5,6.5) to[out=90,in=180] (6,7);
    \end{tikzpicture}
    \quad
    $\cdot$
    \quad
    \begin{tikzpicture}[scale=0.4, baseline=(base)]
               \coordinate (base) at (0,3.25);
        \draw[gray,line width = 1](0,0.5)--(0,8.5);
        \draw[gray,line width = 1](6,0.5)--(6,8.5);

        \foreach \x in {1,2, 3,4,5,6}{
            \draw[fill=black]  (0,\x+1) circle [radius=0.15];
        } 

        \foreach \y in {1,2,3,4}{
            \draw[fill=black]  (6,\y + 2) circle [radius=0.15];
        } 

        \draw (0,2) to[out=0,in=180] (6,3);
        \draw (0,3) to[out=0,in=180] (6,4);
        \draw (0,6) to[out=0,in=180] (6,5);
        \draw (0,7) to[out=0,in=180] (6,6);
        
        \draw(0,4) to[out=0,in=-90] (1,4.5) to[out=90,in=0] (0,5);
    \end{tikzpicture}
    \quad
    $=$
    \quad
        \begin{tikzpicture}[scale=0.4, baseline=(base)]
               \coordinate (base) at (0,3.25);
        \draw[gray,line width = 1](0,0.5)--(0,8.5);
        \draw[gray,line width = 1](6,0.5)--(6,8.5);

        \foreach \x in {1,2,3,4,5,6,7,8}{
            \draw[fill=black]  (0,\x) circle [radius=0.15];
        } 

        \foreach \y in {1,2,3,4}{
            \draw[fill=black]  (6,\y + 2) circle [radius=0.15];
        }  

        \draw (0,7) to[out=0,in=-90] (1,7.5) to[out=90,in=0] (0,8);
        \draw (0,2) to[out=0,in=-90] (1,2.5) to[out=90,in=0] (0,3);
        \draw (0,4) to[out=0,in=-90] (1,4.5) to[out=90,in=0] (0,5);
        \draw (0,1) to[out=0,in=-90] (3,3.5) to[out=90,in=0] (0,6);

        \draw[dashed] (6,3) to[out=180,in=-90] (5,3.5) to[out=90,in=180] (6,4);
        \draw[dashed] (6,5) to[out=180,in=-90] (5,5.5) to[out=90,in=180] (6,6);
    \end{tikzpicture}
    .
    \caption{The map $\widetilde{d_2}\colon \SubmaxInnermostCupComplex_2(8, 6) \to \SubmaxInnermostCupComplex_2(8, 4)$. Forgetting dashings, this map is given by right-multiplication by $L_3 \in \TL(6,4)$  (just like $d_2$). On dashings, it is the map $F \mapsto F(3)$ of \cref{lem: multiplying marked guys by L2k+1}.}  \label{fig: order-preserving correspondence}
\end{figure}

\begin{remark} The following description from the point of view of coverings may be helpful. Recall from \cref{subsection: MV complexes} that a Mayer--Vietoris complex takes as input the data of an $R$-algebra $A$, with $A$-modules $N \subset M$, and submodules $N_1, \dots, N_w$ of $N$ which form a cover of $N$ in the sense that $N_1 + \dots + N_w = N$. Suppose that we are given another such datum: $N' \subset M' $, and $N'_1, \dots , N'_{w'}$ with $N'_1 + \dots + N'_{w'} = N'$, and a map of pairs of $A$-modules $f\colon (M,N) \to (M',N')$. In order to extend $f$ to a map of Mayer--Vietoris complexes, we need to provide the data of a morphism of sets $g\colon \{1,\dots,w\} \to \{1, \dots w'\}$ so that for each $i \in \{1,\dots,w\}$, $f$ restricts to a map $N_{i} \to N'_{g(i)}$, i.e.~a lift of $f$ to a map of coverings. This is precisely what we do above for submaximal cup complexes, c.f.~\cref{remark:submaximal-is-mayer-vietoris} (note: $g$ only need be defined where $f$ is nonzero).
\end{remark}

We now argue that for abstract reasons there exists some extension of $\widetilde{d_k}$ over the whole innermost cup complex.

\begin{lemma} \label{lemma: new extension from submaximal}
    The map $\widetilde{d_k}\colon \SubmaxInnermostCupComplex_*(2n, 2i) \to \SubmaxInnermostCupComplex_*(2n, 2i-2)$ can be extended to a map $$\widetilde{d_k}\colon\InnermostCupComplex_*(2n, 2i) \to \InnermostCupComplex_*(2n, 2i-2)$$ of innermost cup complexes.
\end{lemma}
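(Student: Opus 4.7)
The plan is to construct $\widetilde{d_k}$ on $\InnermostCupComplex_*(2n,2i)$ by induction on the simplicial degree $q$, using the $\TL_{2n}$-module direct-sum decomposition $\InnermostCupComplex_q(2n,2i) = \SubmaxInnermostCupComplex_q(2n,2i) \oplus N_q$ available for $q \geq 1$, where $N_q := \bigoplus_{F} \operatorname{Cup}(F)(2n,2i)$ ranges over non-submaximal collections $F$. By \cref{lemma:forgetting cups} and \cref{lemma:left-projective-TL-modules}, each summand $\operatorname{Cup}(F)(2n,2i) \cong \TL(2n, 2i-2|F|)$ is a projective $\TL_{2n}$-module for $|F| < i$, and $N_i = 0$ because the unique maximal set is submaximal. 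The target $\InnermostCupComplex_*(2n,2i-2)$ is acyclic in positive degrees by \cref{proposition:complexes-of-innermost-cups-are-acyclic}.

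I would first fix a $\TL_{2n}$-linear extension $\widetilde{d_k}_0 \colon \InnermostCupComplex_0(2n,2i) \to \InnermostCupComplex_0(2n,2i-2)$ of the given map $D \mapsto D \cdot L_{2k+1}$ on $\SubmaxInnermostCupComplex_0(2n,2i)$, chosen so that it descends to a $\TL_{2n}$-linear map $\cell(2n,2i) \to \cell(2n,2i-2)$. With this in hand, I extend inductively: on $\SubmaxInnermostCupComplex_q$, take $\widetilde{d_k}$ to be the given $f_q$; on $N_q$, lift the composite $\phi := \widetilde{d_k} \circ \delta \colon N_q \to \InnermostCupComplex_{q-1}(2n,2i-2)$ through the surjection $\delta_q \colon \InnermostCupComplex_q(2n,2i-2) \twoheadrightarrow \im \delta_q$. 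By the inductive chain-map property and $\delta^2 = 0$, the image of $\phi$ lies in $\ker \delta_{q-1}$, which equals $\im \delta_q$ by acyclicity for $q \geq 2$, and by the descending property of $\widetilde{d_k}_0$ for $q = 1$. Projectivity of $N_q$ then provides the lift. In degree $i$, no new choice is needed since $N_i = 0$ and $\widetilde{d_k}|_{\SubmaxInnermostCupComplex_i} = f_i$ is already determined (and is zero by \cref{def: face map lift to submax}$(1)$).

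The main obstacle is the degree-zero step. The naive choice $D \mapsto D \cdot L_{2k+1}$ on all of $\TL(2n,2i)$ does \emph{not} descend to cell modules: for instance, the diagram $D \in \TL(4,4)$ with arcs $L_1 R_1$, $L_2 L_3$, $L_4 R_4$, $R_2 R_3$ has a non-submaximal right cup and so vanishes in $\cell(4,4)$, yet $D \cdot L_3 \in \TL(4,2)$ has no right cup and thus represents a nonzero element of $\cell(4,2)$. A further complication is that the complement $N_0$ of $\SubmaxInnermostCupComplex_0$ in $\TL(2n,2i)$ is not a $\TL_{2n}$-submodule, since left multiplication can create submaximal innermost right cups from non-submaximal ones. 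The correct extension is therefore constructed globally: choose any $\TL_{2n}$-linear map $\bar g \colon \cell(2n,2i) \to \cell(2n,2i-2)$, lift it to $g \colon \TL(2n,2i) \to \TL(2n,2i-2)$ using projectivity, and then adjust by a $\TL_{2n}$-linear map with image in $\im \delta_1$ to ensure agreement with $D \mapsto D \cdot L_{2k+1}$ on $\SubmaxInnermostCupComplex_0$; the adjustment is consistent because for $D \in \SubmaxInnermostCupComplex_0$ with a submaximal innermost right cup at $(2j-1,2j)$, the product $D \cdot L_{2k+1}$ either vanishes ($j = k+1$, since $a = 0$) or retains this cup (possibly relabelled) and hence lies in $\im \delta_1$.
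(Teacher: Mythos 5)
Your induction for $q \geq 1$ (splitting off the projective complement $N_q$ and lifting against the acyclic target) is exactly the paper's proof, and your diagnosis of the degree-zero problem is correct and important: the diagram $D \in \TL(4,4)$ with through-strands $1\to 1$ and $4 \to 4$, a left cup at $(2,3)$ and a right cup at $(2,3)$ satisfies $D\cdot L_3 = D \cdot L_1 = $ (through-strand $1 \to 1$, left cup $(2,3)$, strand $4 \to 2$), which has no right cup. So $-\cdot L_{2k+1}$ does not preserve the span of diagrams having a right cup, the square displayed just before \cref{lemma: new extension from submaximal} does not commute on that span, and the paper's own proof (which only treats the splitting for $q\ge 1$ and takes the degree-zero map to be $-\cdot L_{2k+1}$ on all of $\TL(2n,2i)$) is genuinely incomplete at the $q=0$ to $q=1$ step. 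You have found a real gap in the paper here.

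However, your proposed repair does not close it, and in fact the statement for a single $k$ cannot be repaired integrally. The ``adjustment'' you invoke is an extension of a $\TL_{2n}$-linear map from the submodule $\SubmaxInnermostCupComplex_0$ (not a direct summand) to all of $\TL(2n,2i)$ with image constrained to lie in $\im\delta_1$, and you give no mechanism for producing it. Concretely, take $2n=2i=4$, $R=\mathbb{Z}$, $a=0$, $k=1$. Since $\TL(4,4)=\TL_4$ is free of rank one over itself, any $\TL_4$-linear $f_0\colon\TL(4,4)\to\TL(4,2)$ is right multiplication by some $X=a_1u_1+a_2u_2+a_3u_3+b_1v_1+b_2v_2$, where $u_j$ is the diagram with a single left cup at $(j,j+1)$ and $v_1,v_2$ are the two diagrams with a right cup. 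The condition $f_0=-\cdot L_3$ on $\SubmaxInnermostCupComplex_0$, applied to the diagram with left and right cups at $(1,2)$ and through-strands $3\to3$, $4\to 4$, gives $a_3+b_2=1$; applied to the diagram with left and right cups at $(3,4)$ and through-strands $1\to1$, $2\to2$, gives $a_1+b_2=0$; and the chain-map condition at your element of $N_1$ over $D$ forces $f_0(D)\in\im\delta_1=\langle v_1,v_2\rangle$, i.e.\ $a_1+a_3=0$. Together these give $2a_3=1$, which has no integral solution, so no admissible degree-zero extension exists and the lemma is false as stated for $\TL_{2n}$-linear chain maps (which is what the application to $\derivedInnermost_*$ requires). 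What is true, and all that the paper actually uses, is the extension of the alternating sum $\widetilde{\partial_i}=\sum_k(-1)^k\widetilde{d_k}$: if $D$ has an innermost right cup at an even position $(m,m+1)$ then $D\cdot L_{m-1}=D\cdot L_{m+1}$, and these two offending terms occur with opposite signs, so $-\cdot\sum_k(-1)^kL_{2k+1}$ does preserve the span of diagrams with a right cup; with that degree-zero map your (and the paper's) induction for $q\ge1$ then goes through verbatim.
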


\begin{proof} 
If $q \geq 1$, the $q$-th chain module $\InnermostCupComplex_q(2n, 2i)$ can be decomposed as 
\[
    \InnermostCupComplex_q(2n, 2i) = \SubmaxInnermostCupComplex_q(2n, 2i) \oplus R_q(2n, 2i)
\]
where $R_q(2n, 2i)$ is the direct sum of $\operatorname{Cup}(F)(2n,2i)$ for innermost sets $F$ which are not submaximal. This excludes in particular the maximal innermost set $M=\{1,3, \dots, 2i-1\}$, so by \cref{lemma:forgetting cups,lemma:left-projective-TL-modules}, each term $R_q(2n, 2i)$ is a direct sum of projective $\TL_{2n}$-modules, and hence projective. We must define our extension on the complement $R_q(2n, 2i)$ of $\SubmaxInnermostCupComplex_q(2n, 2i)$ in each degree. This is an induction on degree using that each $R_q(2n, 2i)$ is projective, and that the codomain $\InnermostCupComplex_*(2n,2i-2)$ is acyclic.
\end{proof}

\section{The derived complexes of innermost and outermost cups}

In this section we explain how to set up derived versions of the complexes of innermost and outermost cups. We explore algebraic properties and consequences, including the connection between the dga of planar loops and the homology of Temperley--Lieb algebras stated in \cref{theorem: shift}.

We will use the general construction described in \cref{definition: dg modules general}.

\begin{definition} \label{def: derived innermost and cell}
The \emph{augmented derived complex of innermost cups}
$$0 \leftarrow \derivedCell(2n,2i) \leftarrow \derivedInnermost_0(2n,2i) \leftarrow \derivedInnermost_1(2n,2i) \leftarrow \cdots \leftarrow \derivedInnermost_i(2n,2i) \leftarrow 0$$
is the complex of $\CPL(2n)$-modules defined by applying the construction $D$ of \cref{definition: dg modules general} levelwise to the chain complex data $X_* = \AugmentedInnermostCupComplex_{*}(2n,2i)$, $Y_* = \AugmentedInnermostCupComplex_{*}(0,2i)$ and the maps $\phi \colon \TL(0,2n) \otimes_{\TL_{2n}} \AugmentedInnermostCupComplex_{*}(2n,2i) \to \AugmentedInnermostCupComplex_{*}(0,2i)$ and $\psi\colon \TL(2n,0) \otimes_{R} \AugmentedInnermostCupComplex_{*}(0,2i) \to \AugmentedInnermostCupComplex_{*}(2n,2i)$
 induced by multiplication of diagrams.

In particular the $(-1)$-th term in this complex is the \emph{derived cell module} $\derivedCell(2n,2i)$ obtained by applying the construction $D$ to  $\cell(2n,2i)$ and the map $\TL(0,2n) \otimes_{\TL_{2n}} \cell(2n,2i) \to \cell(0,2i)$ induced by multiplication of diagrams.
\end{definition}

This definition can be spelled out as follows. For $p \geq 0$, the $p$-th term $\derivedInnermost_p(2n,2i)$ is given in degree $q$ by
$$\derivedInnermost_p(2n,2i)_q =  \begin{cases}
    \TL(0,2n) \otimes_R \TL_{2n}^{\otimes q-1} \otimes_R \InnermostCupComplex_{p}(2n,2i) & q > 0 \\
    \InnermostCupComplex_{p}(0,2i) & q=0.
\end{cases}$$

The $(p=-1)$-term $\derivedCell(2n,2i)$ 
is given in degree $q$ by
$$\derivedCell(2n,2i)_q = \begin{cases} \TL(0,2n) \otimes_R \TL_{2n}^{\otimes q-1} \otimes_R \cell(2n,2i) & q>0\\
\cell(0,2i) & q=0.
\end{cases}$$

The role that (most of) the complexes $\derivedInnermost_p(2n,2i)$ will play is suggested by the following result.

\begin{lemma}\label{lem:DInnAcyclic}
For $0 \leq p < i$, the complex $\derivedInnermost_p(2n,2i)$ is acyclic if $n = 0$ or $i \leq n$.
\end{lemma}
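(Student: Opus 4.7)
The plan is to separate the trivial case $n = 0$ from the main case $i \leq n$ and then decompose the complex according to the choice of innermost right cup collection. When $n = 0$, we have $\TL_0 = R$ and $\TL(0, 2n) = R$, so the bar portion of $\derivedInnermost_p(0, 2i)$ is just the standard bar resolution of the $R$-module $\InnermostCupComplex_p(0, 2i)$ by itself, which is well-known to be acyclic via its augmentation to $\derivedInnermost_p(0, 2i)_0 = \InnermostCupComplex_p(0, 2i)$. The cone is therefore contractible.

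Now assume $i \leq n$. Apply the decomposition \eqref{eq:InnDecomposedIntoCup} in both the $(2n, 2i)$ and $(0, 2i)$ guises to write $\InnermostCupComplex_p(2n, 2i) = \bigoplus_F \operatorname{Cup}(F)(2n, 2i)$ and $\InnermostCupComplex_p(0, 2i) = \bigoplus_F \operatorname{Cup}(F)(0, 2i)$, summing over $p$-element collections $F$ of innermost right cups. Left composition with any $(0, 2n)$-diagram does not affect the right bar, so the structure maps $\phi$ and $\psi$ of \cref{def: derived innermost and cell} respect these decompositions, and $\derivedInnermost_p(2n, 2i)$ splits as a direct sum of $D$-complexes indexed by $F$. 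It thus suffices to show each summand is acyclic. By \cref{lemma:forgetting cups} and its evident analogue for $(0, 2i)$-diagrams, the $F$-summand is identified with the $D$-construction applied to $\bigl(\TL(2n, 2j), \TL(0, 2j)\bigr)$, where $j := i - p$ satisfies $1 \leq j \leq n$. By \cref{lemma:left-projective-TL-modules}, $\TL(2n, 2j)$ is then projective as a left $\TL_{2n}$-module, so the bar construction $\Bar(\TL(0, 2n), \TL_{2n}, \TL(2n, 2j))$ is a projective resolution of $\TL(0, 2n) \otimes_{\TL_{2n}} \TL(2n, 2j)$. Consequently each summand is quasi-isomorphic to the two-term complex
$$\TL(0, 2n) \otimes_{\TL_{2n}} \TL(2n, 2j) \longrightarrow \TL(0, 2j)$$
with the arrow induced by composition in $\TL$, and acyclicity reduces to showing this is an isomorphism.

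I expect the main obstacle to be proving that this composition map is an isomorphism for $1 \leq j \leq n$. Surjectivity is immediate from the windsock lemma (\cref{lemma: wind sock}), which for $j \geq 1$ produces, for every $N \in \TL(0, 2j)$, a factorization $N = P \cdot M$ through $2n$. For injectivity, the plan is to leverage the retract structure supplied by \cref{lemma:left-projective-TL-modules}: writing $\TL(2n, 2j) \cong \TL_{2n} \cdot e$ for an appropriate idempotent diagram $e \in \TL_{2n}$, the source identifies with the direct $R$-submodule $\TL(0, 2n) \cdot e$ of $\TL(0, 2n)$, and a direct comparison of diagrammatic bases will show that the natural section $\TL(0, 2j) \to \TL(0, 2n) \cdot e$ obtained by ``stretching through $e$'' is a two-sided inverse to the composition map. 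Once this key isomorphism is in hand, every summand becomes acyclic and the conclusion follows.
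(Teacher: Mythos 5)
Your proposal is correct and follows essentially the same route as the paper: both reduce, via projectivity of $\TL(2n,2j)$ from \cref{lemma:left-projective-TL-modules} and the decomposition \eqref{eq:InnDecomposedIntoCup}, to showing that the composition map $\TL(0,2n)\otimes_{\TL_{2n}}\TL(2n,2j)\to\TL(0,2j)$ is an isomorphism, and both settle that using the idempotent/retract structure of \cref{lemma:left-projective-TL-modules} (the paper phrases this as exhibiting the map as a retract of the isomorphism $\TL(0,2n)\otimes_{\TL_{2n}}\TL_{2n}\cong\TL(0,2n)$, which spares you the explicit two-sided-inverse check you defer to a basis comparison). The reordering of the decomposition and projectivity steps, and the separate treatment of $n=0$, are immaterial.
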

\begin{proof}
For $0 \leq p < i$, the left $\TL_{2n}$-module $\InnermostCupComplex_p(2n,2i)$ is projective by \cref{cor:InnIsProj} and so the map $$\Bar(\TL(0,2n), \TL_{2n}, \InnermostCupComplex_p(2n,2i)) \to \TL(0,2n) \otimes_{\TL_{2n}} \InnermostCupComplex_p(2n,2i)$$ is a quasiisomorphism. It follows that
$$\derivedInnermost_p(2n,2i) \simeq \mathrm{Cone}(\TL(0,2n) \otimes_{\TL_{2n}} \InnermostCupComplex_p(2n,2i) \to \InnermostCupComplex_p(0,2i)).$$

To finish the argument we will show that $\TL(0,2n) \otimes_{\TL_{2n}} \InnermostCupComplex_p(2n,2i) \to \InnermostCupComplex_p(0,2i)$ is an isomorphism. Using the decomposition of the left $\TL_{2n}$-module $\InnermostCupComplex_p(2n,2i)$ into $\mathrm{Cup}(F)(2n,2i)$'s from \cref{eq:InnDecomposedIntoCup}, and the isomorphism of the latter with $\TL(2n,2j)$'s for $j = i - p > 0$ from \cref{lemma:forgetting cups}, we need to show that the map
$$A \otimes B \to A \cdot B \colon \TL(0,2n) \otimes_{\TL_{2n}} \TL(2n,2j) \to \TL(0,2j),$$
induced by composition in the Temperley--Lieb category, is an isomorphism.

If $n=0$, the statement is trivial. If $i \leq n$, it follows that $0 < j \leq n$. Under this assumption, the map in question can be identified with a retract of an isomorphism using \cref{lemma:left-projective-TL-modules}. More precisely, there is a commutative diagram
\begin{equation*}
\begin{tikzcd}
    \TL(0,2n) \otimes_{\TL_{2n}} \TL(2n,2j) \arrow[r] \arrow[d, shift right, hook] & \TL(0,2j) \arrow[d, shift right, hook]\\
    \TL(0,2n) \otimes_{\TL_{2n}} \TL(2n,2n) \arrow[r, "\cong"] \arrow[u, shift right, two heads] & \TL(0,2n) \arrow[u, shift right, two heads]
\end{tikzcd}
\end{equation*}
where the horizontal maps are given by composition and the vertical maps are retractions. Any algebra $A$ is a unit for $\otimes_A$, so the lower horizontal map is an isomorphism. The left vertical retraction is obtained from \cref{lemma:left-projective-TL-modules} using that functors, in this case $\TL(0,2n) \otimes_{\TL_{2n}}(-)$, preserve retractions. The right vertical retraction is a compatible retraction obtained from that in \cref{lemma:left-projective-TL-modules} by restriction. More precisely, the retraction in \cref{lemma:left-projective-TL-modules} restricts along any two embeddings of right $\TL_{2n}$-modules $\TL(0, 2n) \hookrightarrow \TL(2n,2n)$ and $\TL(0, 2j) \hookrightarrow \TL(2n, 2j)$ given by left multiplication with a basis diagram $D \in \TL(2n,0)$, because right multiplication by $E$ preserves left composites with $D$.
\end{proof}

When $a=0$, the modules $\cell(2n,2i)$ fit together into the complex of outermost cups (\cref{defn:outermostcups}), via $\OutermostCupComplex_q(2n) \cong \cell(2n,2q)$, and we may fit the derived cell modules together analogously. We will use that multiplication of diagrams defines maps
\begin{equation}\label{eq:SillyAction}
\TL(0,2n) \otimes_{\TL_{2n}} \OutermostCupComplex_{q}(2n) \cong \TL(0,2n) \otimes_{\TL_{2n}} \cell(2n,2q) \to \cell(0,2q),
\end{equation}
whose target is 0 for $q>0$, and $R$ for $q=0$.

\begin{definition} \label{def: derived outermost}
Assume that $a=0$. The \emph{derived complex of outermost cups}
$$0 \leftarrow  \derivedOutermost_0(2n) \xleftarrow{\partial_1} \derivedOutermost_1(2n) \xleftarrow{\partial_2}  \derivedOutermost_2(2n) \leftarrow \cdots \xleftarrow{\partial_n}  \derivedOutermost_n(2n) \leftarrow 0,$$
is the complex of $\CPL(2n)$-modules defined by applying the construction $D$ of \cref{definition: dg modules general} levelwise to the chain complexes of left $\TL_{2n}$-modules $X_*=\OutermostCupComplex_{*}(2n)$ and $Y_*=R$ concentrated in degree zero. Then the chain maps are $\phi: \TL(0,2n) \otimes_{\TL_{2n}} \OutermostCupComplex_{*}(2n) \to R$ (\cref{eq:SillyAction}) and $\psi: \TL(2n,0) \otimes_{R} R \to \OutermostCupComplex_{*}(2n)$ (use $R \cong \TL(0,0)$ and compose in $\TL$, landing in $\TL(2n,0) \cong \OutermostCupComplex_0(2n)$). It is then augmented by the usual augmentation
$$\epsilon \colon \derivedOutermost_0(2n) = L(2n) \to R = \TL(0,0)$$
given by multiplication of diagrams.
\end{definition}

With this definition, the isomorphisms $\OutermostCupComplex_q(2n) \cong \cell(2n,2q)$ of \cref{lem:OutIsCell} yield isomorphisms $\derivedOutermost_q(2n) \cong \derivedCell(2n,2q)$.

\begin{lemma} \label{lem: derived horizontally acyclic}
    The augmentation $\derivedInnermost_*(2n,2i) \to \derivedCell(2n,2i)$ is a quasiisomorphism. The augmentation $\derivedOutermost_*(2n) \to R$ is a quasiisomorphism (when these complexes are defined, i.e.~when $a = 0$).
\end{lemma}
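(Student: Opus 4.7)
The plan is to view each augmented complex as a bounded bicomplex, with the cup-direction running horizontally and the bar-direction vertically, and verify acyclicity row-by-row. Since the bicomplex is bounded horizontally, the row-filtration spectral sequence converges, and if every row is acyclic then the total complex is too, giving the desired quasiisomorphism.

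For the innermost statement, functoriality of the construction $D$ of \cref{definition: dg modules general} identifies the $q$-th row of the augmented bicomplex $0 \leftarrow \derivedCell(2n,2i) \leftarrow \derivedInnermost_*(2n,2i) \leftarrow 0$ as $\TL(0,2n) \otimes_R \TL_{2n}^{\otimes(q-1)} \otimes_R \AugmentedInnermostCupComplex_*(2n,2i)$ for $q \geq 1$, and $\AugmentedInnermostCupComplex_*(0,2i)$ for $q=0$. The first is acyclic by \cref{proposition:complexes-of-innermost-cups-are-acyclic} together with the fact that $\TL(0,2n) \otimes_R \TL_{2n}^{\otimes(q-1)}$ is a free, hence flat, $R$-module; the second is acyclic by \cref{remark:innermostcupcomplex-0-i-is-acyclic}. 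Thus the $E^1$-page of the row filtration vanishes and we conclude.

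For the outermost statement, first observe that when $n=0$ the complex $\OutermostCupComplex_*(0) \cong R$ is concentrated in cup-degree zero by \cref{remark:outermost in zero row}, so $\derivedOutermost_*(0) \cong \CPL(0) = R$ and the augmentation is the identity. For $n \geq 1$, place the augmentation $R$ at cup-position $-1$ and bar-position $0$. For $q \geq 1$, the row is $\TL(0,2n) \otimes_R \TL_{2n}^{\otimes(q-1)} \otimes_R \OutermostCupComplex_*(2n)$, which is acyclic by \cref{lemma:outermostcupcomplex-acyclic} together with $R$-flatness. For $q=0$, using that in \cref{def: derived outermost} the target $Y_* = R$ is concentrated in degree zero, one has $\derivedOutermost_0(2n)_0 = \TL(0,0) = R$ while $\derivedOutermost_p(2n)_0 = 0$ for $p \geq 1$, so the augmented row reduces to $R \xleftarrow{\mathrm{id}} R \leftarrow 0 \leftarrow \cdots \leftarrow 0$, manifestly acyclic. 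The same spectral-sequence argument concludes.

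The only substantive step is the correct identification of the rows of each bicomplex and the verification that their horizontal differentials are induced from the (augmented) cup complexes by functoriality of $D$; after that, the conclusion follows immediately from the connectivity results of \cref{subsection: MV complexes} and exactness of tensoring with free $R$-modules.
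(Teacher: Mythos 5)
Your argument is correct and is essentially the paper's own proof: the paper packages the row-by-row acyclicity as the statement that the functor $D$ of \cref{definition: dg modules general} preserves quasiisomorphisms (because the bar construction only involves tensoring with the free $R$-modules $\TL(0,2n)$ and $\TL_{2n}$), and then feeds in the acyclicity of the augmented cup complexes from \cref{proposition:complexes-of-innermost-cups-are-acyclic} and \cref{lemma:outermostcupcomplex-acyclic}. Your row-filtration spectral sequence is just an explicit unwinding of the same two inputs, and your separate treatment of the bar-degree-zero row and of the case $n=0$ matches what the paper does implicitly.
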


\begin{proof} 
As $\TL(0,2n)$ and $\TL_{2n}$ are free $R$-modules, the functor
$$D(X, \phi) = \mathrm{Cone}(\phi_*\colon \Bar(\TL(0,2n), \TL_{2n}, X) \to Y)$$
preserves quasiisomorphisms in the data $(X,\phi)$, as the following both do
$$X \mapsto \Bar(\TL(0,2n), \TL_{2n}, X) \quad \text{ and } \quad Y \mapsto Y.$$

In the first case, note that both augmentations
$$\InnermostCupComplex_{*}(2n,2i) \to \cell(2n,2i) \quad \text{ and } \quad \InnermostCupComplex_{*}(0,2i) \to \cell(0,2i)$$
are quasiisomorphisms, by \cref{proposition:complexes-of-innermost-cups-are-acyclic} (see also \cref{remark:innermostcupcomplex-0-i-is-acyclic}).

In the second case, note that both augmentations
$$\OutermostCupComplex_{*}(2n) \to 0 \quad \text{ and } \quad R \to R$$
are quasiisomorphisms, for $n>0$ by \cref{lemma:outermostcupcomplex-acyclic} and for $n=0$ by observation.
\end{proof}

\subsection{Weight gradings and hook maps} \label{derived weights and hooks} We now discuss the interaction of the above constructions with the weight grading and hook map.

Recall, from \cref{def: weight}, that the weight grading on the module $\CPL(0,2n,2i)$ was defined by multiplying by the diagram $\Lmax \in \TL(2i,0)$ with $i$ cups between adjacent pairs of nodes, then counting loops. The derived cell module $\derivedCell(2n,2i)$ is the quotient of $\CPL(0,2n,2i)$ by those pinned diagrams having a right cup. These are weight-homogeneous, so the weight grading descends to $\derivedCell(2n,2i)$. 

We define a weight grading on each $\derivedOutermost_i(2n)$ by undashing all dashed outermost left cups and then counting loops.

\begin{lemma} \label{lem: weight on outermost}
The isomorphism $\derivedCell(2n,2i) \cong \derivedOutermost_i(2n)$ preserves the weight grading. The boundary maps $\partial_i$ in $\derivedOutermost_*(2n)$ preserve the weight grading.
\end{lemma}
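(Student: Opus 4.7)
The plan is to unfold both weight definitions to expressions in terms of diagrams in $\CPL(0,2n,0)$ (that is, systems of planar loops) and then verify the two assertions as direct diagrammatic identities.

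For the first statement, I would use that the isomorphism $\cell(2n,2i) \cong \OutermostCupComplex_i(2n)$ of \cref{lem:OutIsCell} is induced by right multiplication by $C_i$, where $C_i$ is the diagram $\Lmax \in \TL(2i,0)$ with all of its cups dashed. The induced isomorphism of derived complexes therefore sends a basis element $(D_0 | D_1 | \cdots | D_q)$ of $\derivedCell(2n,2i)_q$ to $(D_0 | D_1 | \cdots | D_q \cdot C_i)$. By definition, the weight of the source is the number of loops formed by $(D_0 | D_1 | \cdots | D_q \cdot \Lmax)$, while the weight of the target is the number of loops formed after undashing the outermost left cups of $D_q \cdot C_i$. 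But undashing $C_i$ just recovers $\Lmax$, so both weights count loops in the same pinned planar diagram.

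For the second statement, it suffices to show that each face map making up $\partial_i$ preserves weight. Via the isomorphism $\OutermostCupComplex_i(2n) \cong \cell(2n,2i)$ and \cref{remark:outermostcups-assumption-on-a}, the $k$-th face of $\partial_i$ is induced by right multiplication $-\cdot L_{2k+1}\colon \cell(2n,2i) \to \cell(2n,2i-2)$ on the last tensor factor. The weight of $(D_0 | D_1 | \cdots | D_q)$ in $\derivedCell(2n,2i)$ is the number of loops in $(D_0 | D_1 | \cdots | D_q \cdot \Lmax(2i))$, and the weight of its image $(D_0 | D_1 | \cdots | D_q \cdot L_{2k+1})$ in $\derivedCell(2n,2i-2)$ is the number of loops in $(D_0 | D_1 | \cdots | D_q \cdot L_{2k+1} \cdot \Lmax(2i-2))$.

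The heart of the argument, and only real computation required, is the diagrammatic identity
\[
L_{2k+1} \cdot \Lmax(2i-2) \;=\; \Lmax(2i) \quad \in \TL(2i,0).
\]
This is easy to verify by tracing arcs: in $L_{2k+1}$ the nodes $2k+1, 2k+2$ on the left are joined by the unique left cup, and the remaining left nodes go straight across (with a shift by $2$ past position $2k+2$); composing with $\Lmax(2i-2)$ pairs up adjacent right nodes, which translates back to pairing up $\{2j-1,2j\}$ on the left for every $j$. Once this identity is in hand, both weights are computed from the same pinned planar diagram, and the face map is weight-preserving. This is the only step that needs care, and it is a routine check rather than a real obstacle.
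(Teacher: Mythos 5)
Your proposal is correct and takes essentially the same approach as the paper: part (1) is the paper's argument verbatim, and for part (2) the paper simply observes that the face maps undash one dashed outermost cup while the weight is computed by undashing all of them and counting loops --- which is exactly your identity $L_{2k+1}\cdot \Lmax = \Lmax$ (in $\TL(2i-2,0)$ and $\TL(2i,0)$ respectively) translated across the isomorphism of \cref{lem:OutIsCell}. Your explicit verification of that identity is sound (in particular no closed loops, hence no factors of $a$, are created in the composition), it is just rendered unnecessary by working directly with the dashed-diagram description as the paper does.
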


\begin{proof} 
For the first part, the isomorphism $\cell(2n,2i) \overset{\sim}\to \OutermostCupComplex_i(2n)$ is induced by multiplication by $\Lmax \in \TL(2i,0)$, and dashing the newly-formed outermost cups. As the weight-grading on $\derivedCell(2n,2i)$ is defined by multiplication by $\Lmax$ and counting loops, and that on $\derivedOutermost_i(2n)$ is defined by neglecting dashings and counting loops, we see that they match up.

For the second part, the face maps in $\derivedOutermost_*(2n)$ are given by undashing one dashed outermost left cup, but we have defined the weight by undashing all dashed outermost left cups and counting loops.
\end{proof}

We define a weight grading on each $\derivedInnermost_p(2n,2i)$ by undashing all dashed innermost right cups, right multiplying by $\Lmax \in \TL(2i,0)$, then counting loops. The face maps on $\derivedInnermost_*(2n,2i)$ are given by undashing one dashed innermost right cup, so as in \cref{lem: weight on outermost} they preserve the weight grading.

A key point will be the following identification of the top term of the derived complex of innermost cups. Recall that we write bigraded suspensions with homological grading first and weight grading second.

\begin{lemma} \label{lemma: E_i E_i (B) 0}
    There is an isomorphism of left $\CPL(2n)$-modules $$\Sigma^{0,i} \CPL(2n) \cong \derivedInnermost_i(2n,2i).$$
\end{lemma}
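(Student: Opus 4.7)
The plan is to construct the isomorphism directly from the ``forgetting dashed cups'' map of \cref{lemma:forgetting cups} and then verify that the relevant structure is preserved. The key observation is that $\mathcal{I}_{2i}^i$ contains only the unique maximal set $M = \{(2j-1,2j) : 1 \le j \le i\}$ of innermost right cups, so the decomposition \eqref{eq:InnDecomposedIntoCup} collapses to $\InnermostCupComplex_i(2n,2i) = \operatorname{Cup}(M)(2n,2i)$. By \cref{lemma:forgetting cups}, deleting all dashed cups gives a left $\TL_{2n}$-module isomorphism $\InnermostCupComplex_i(2n,2i) \xrightarrow{\cong} \TL(2n,0)$, and likewise $\InnermostCupComplex_i(0,2i) \cong \TL(0,0) = R$. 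Assembling these degreewise produces an $R$-linear bijection $\derivedInnermost_i(2n,2i)_q \xrightarrow{\cong} \CPL(2n)_q$ for each $q \geq 0$.

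I will then verify three compatibilities: (i) the map is a chain map, (ii) it is $\CPL(2n)$-linear, and (iii) it shifts the weight grading by $i$. Parts (i) and (ii) are essentially formal: the differential on $\derivedInnermost_i(2n,2i)$ is the bar differential (there is no horizontal differential in the top term), and both the $\CPL(2n)$-multiplication and the structure map $\psi$ of \cref{definition: dg modules general} are induced by composition in $\TL$. Since the maximal set of dashed cups $M$ lives entirely on the right of the last tensor factor, forgetting cups commutes with all these compositions, so matches the corresponding bar differential and left multiplication on $\CPL(2n)$.

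The substantive point is (iii). By \cref{derived weights and hooks}, the weight of a basis element $(D_0 | \cdots | D_q) \in \derivedInnermost_i(2n,2i)_q$ is computed by undashing all cups and then taking the weight of $(D_0 | \cdots | D_q \cdot \Lmax) \in \CPL(0,2n,0)$. Since the $i$ dashed cups of $D_q$ connect precisely the pairs $(2j-1, 2j)$, and $\Lmax$ has cups at exactly these positions on its left, each of the $i$ pairs of matching cups closes into a loop upon composition. Hence $D_q \cdot \Lmax = a^i \cdot D'_q$, where $D'_q \in \TL(2n,0)$ is the image of $D_q$ under forgetting cups. These $i$ new loops contribute exactly $i$ to the weight, so the weight on $\derivedInnermost_i(2n,2i)_q$ is the weight on $\CPL(2n)_q$ plus $i$, yielding an isomorphism $\Sigma^{0,i} \CPL(2n) \cong \derivedInnermost_i(2n,2i)$.

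I do not expect a real obstacle here: the lemma is essentially a bookkeeping statement once one recognises that the ``top'' term of the innermost cup complex has only a single summand. The most delicate ingredient is tracking the weight shift, but this is forced by the choice of $\Lmax$ and the structure of $M$, and follows from a direct inspection of how loops are created by the pairing of these two diagrams.
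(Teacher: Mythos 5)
Your proposal is correct and is essentially the paper's own argument: the paper also identifies $\InnermostCupComplex_i(2n,2i)$ with $\TL(2n,0)$ via the unique maximal set of dashed cups (phrased as right multiplication by the fully dashed diagram, i.e.\ the inverse of your ``forgetting cups'' map from \cref{lemma:forgetting cups}), lets this induce the isomorphism on the derived level, and obtains the weight shift of $i$ from the $i$ loops created upon composing with $\Lmax$. The extra verifications you spell out (chain map, $\CPL(2n)$-linearity) are left implicit in the paper but are exactly the right points to check.
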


\begin{proof}
There is an isomorphism $\TL(2n,0) \cong \InnermostCupComplex_i(2n,2i)$ of left $\TL_{2n}$-modules induced by right multiplying with the unique diagram in $\TL(0,2i)$ having $i$ innermost right cups, and dashing them all. This induces the isomorphism $\CPL(2n) \cong \derivedInnermost_i(2n,2i)$. The weight grading in $\derivedInnermost_i(2n,2i)$ is given by undashing these right innermost cups and multiplying by $\Lmax \in \TL(2i,0)$. In doing this we create $i$ loops, so the isomorphism shifts the weight grading by $i$.
\end{proof}

Recall the \emph{hook map} $$h\colon \CPL(0,2n,2i) \to \CPL(0,2n+2,2i)$$ of \cref{lemma:hook-map}. By definition, each cell module $\cell(2n,2i)$ is a quotient of $\TL(2n,2i)$ (\cref{definition:cell-modules}).
Recall that by the general construction of \cref{definition: dg modules general}, this induces a degree-wise quotient map
$$\CPL(0,2n,2i) \to \derivedCell(2n,2i),$$
(see \cref{def: dg modules L02n2i,def: derived innermost and cell} for the definitions of those modules via the general construction). It follows from the definition that the hook map $h$ descends along this quotient map to define a \emph{hook map} of derived cell modules $\derivedCell(2n,2i) \to \derivedCell(2n+2,2i)$, and that (under the identification of \cref{lem:OutIsCell}) this map respects the differentials in the derived outermost cup complexes, yielding a \emph{hook map}
$$h\colon \derivedOutermost_*(2n) \to \derivedOutermost_*(2n+2).$$

The following proposition will be one of the key ingredients in proving our main technical result, \cref{theorem:derived-outermost-cup-complex-resolution}. It also gives the connection between the dga of planar loops and the homology of Temperley--Lieb algebras stated in \cref{theorem: shift}.

\begin{proposition} \label{prop: suspension of dga is derived cell}
    There is a preferred equivalence of $\CPL(2n)$-modules
    $$\Sigma^i \CPL(2n) \simeq \derivedCell(2n,2i).$$
     In the setting where the additional weight grading is defined, the map is homogeneous of degree $i$, so we get a preferred equivalence of $\CPL(2n)$-modules
    $$\Sigma^{i,i} \CPL(2n) \simeq \derivedCell(2n,2i).$$
    These equivalences are compatible with the hook maps defined on each side.
\end{proposition}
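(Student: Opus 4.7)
The plan is to construct the equivalence as a span of quasiisomorphisms with apex the totalization
$$\mathrm{T}(2n,2i) := \mathrm{Tot}\bigl[\derivedInnermost_0(2n,2i) \leftarrow \derivedInnermost_1(2n,2i) \leftarrow \cdots \leftarrow \derivedInnermost_i(2n,2i)\bigr]$$
of the derived complex of innermost cups, viewed as a chain complex of $\CPL(2n)$-modules. I will produce two quasiisomorphisms of $\CPL(2n)$-modules out of $\mathrm{T}(2n,2i)$: one to $\derivedCell(2n,2i)$ and one to $\Sigma^{i,i} \CPL(2n)$.

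The first quasiisomorphism is the augmentation $\mathrm{T}(2n,2i) \xrightarrow{\sim} \derivedCell(2n,2i)$, which is a restatement of \cref{lem: derived horizontally acyclic}. For the second, I would filter $\mathrm{T}(2n,2i)$ by the $\CPL(2n)$-subcomplexes $F^p \mathrm{T} := \bigoplus_{p' \leq p} \Sigma^{p'} \derivedInnermost_{p'}(2n,2i)$, whose successive quotients are $F^p \mathrm{T}/F^{p-1}\mathrm{T} \cong \Sigma^p \derivedInnermost_p(2n,2i)$. By \cref{lem:DInnAcyclic}, $\derivedInnermost_p(2n,2i)$ is acyclic for $0 \leq p < i$, so a short induction via the long exact sequences associated to the filtration quotients shows $F^{i-1} \mathrm{T}$ is acyclic, and hence the projection $\mathrm{T}(2n,2i) \twoheadrightarrow \Sigma^i \derivedInnermost_i(2n,2i)$ is a quasiisomorphism. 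Post-composing with the preferred $\CPL(2n)$-module isomorphism $\derivedInnermost_i(2n,2i) \cong \Sigma^{0,i} \CPL(2n)$ of \cref{lemma: E_i E_i (B) 0} gives the second quasiisomorphism, completing the span that produces the desired equivalence $\derivedCell(2n,2i) \simeq \Sigma^{i,i} \CPL(2n)$.

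For the weight-graded statement (when $a=0$), each step above is manifestly homogeneous using the weight gradings on $\derivedInnermost_p$ and $\derivedCell$ introduced in \cref{derived weights and hooks}; the required bigraded shift $\Sigma^{i,i}$ emerges because \cref{lemma: E_i E_i (B) 0} already incorporates a weight shift by $i$. For compatibility with hook maps, the hook map of \cref{lemma:hook-map} extends by the same formula to $\InnermostCupComplex_*(2n,2i)$ and hence, by functoriality of the construction of \cref{definition: dg modules general}, to $\derivedInnermost_*(2n,2i)$ and thereby to $\mathrm{T}(2n,2i)$. Every arrow in the span is tautologically natural with respect to this extension, and under $\derivedInnermost_i(2n,2i) \cong \Sigma^{0,i} \CPL(2n)$ it recovers the original hook map on $\CPL(2n)$. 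The main obstacle is nothing substantial—just tracking $\CPL(2n)$-module structure, weight grading, and hook-map naturality through the span—all of which follow routinely from the explicit construction.
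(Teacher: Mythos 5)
Your proposal is correct and follows essentially the same route as the paper: both form the totalisation of $\derivedInnermost_*(2n,2i)$, use \cref{lem: derived horizontally acyclic} for one leg of the zig-zag, use the acyclicity of $\derivedInnermost_p(2n,2i)$ for $p<i$ (\cref{lem:DInnAcyclic}) to see that projection onto the top term $\Sigma^i\derivedInnermost_i(2n,2i)\cong\Sigma^{i,i}\CPL(2n)$ is a quasiisomorphism, and handle weights and hook maps via \cref{lemma: E_i E_i (B) 0} and the observation that the hook map preserves dashed innermost cups. Your filtration induction is just a spelled-out version of the paper's map of double complexes, so there is nothing further to add.
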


\begin{proof}
Recall the resolution $\derivedInnermost_*(2n,2i)$ of $\derivedCell(2n,2i)$. Consider $\derivedInnermost_p(2n,2i)_q$ as a double complex, augmented by $\derivedCell(2n, 2i)$ in the $p=-1$ column. Because $\derivedInnermost_*(2n,2i) \to \derivedCell(2n, 2i)$ is an augmented complex of $L(2n)$-modules, this can be viewed as a chain complex of $L(2n)$-modules. By \cref{lem: derived horizontally acyclic} the totalisation of this double complex is acyclic. That is, there is a quasiisomorphism
$$\derivedCell(2n, 2i) \overset{\sim}\longleftarrow [\derivedInnermost_0(2n,2i) \leftarrow \derivedInnermost_1(2n,2i) \leftarrow \cdots \leftarrow \derivedInnermost_i(2n,2i)],$$
where we use bracket notation for totalisation as in \cref{section: small models}.

For $0 \leq p < i$, $\derivedInnermost_p(2n,2i)$ is acyclic by \cref{lem:DInnAcyclic}. For $p=i$ we have $\derivedInnermost_i(2n,2i) \cong \Sigma^{0,i} \CPL(2n)$ by \cref{lemma: E_i E_i (B) 0} (the shift of weight to be ignored if we are not working in a setting where weight is defined). The map of double complexes
\begin{equation*}
\begin{tikzcd}[column sep=2.2ex]
\derivedInnermost_0(2n,2i) \dar{\simeq}& \lar \derivedInnermost_1(2n,2i) \dar{\simeq} & \lar \cdots & \derivedInnermost_{i-1}(2n,2i) \lar \dar{\simeq}& \lar \derivedInnermost_i(2n,2i) \arrow[d, equals]\\
0 & \lar 0 & \lar \cdots & 0 \lar & \lar \Sigma^{0,i} \CPL(2n)
\end{tikzcd}
\end{equation*}
is therefore a quasiisomorphism on totalisations. The totalisation of the latter double complex is the $i$-fold suspension of $\CPL(2n)$. We obtain a zig-zag of quasiisomorphisms of $\CPL(2n)$-modules between $\derivedCell(2n, 2i)$ and $\Sigma^{i,i} \CPL(2n)$, as required.

To see that this equivalence is compatible with the hook map, note that we can define a hook map on the (derived) innermost cup complexes simply by noticing that $\derivedInnermost_0(2n,2i) \cong \CPL(0,2n,2i)$, and that (dashed) right innermost cups are preserved by the hook map. In other words, the hook map works just like before, not interacting with dashed cups, which it must always fix. The second part of the above equivalence comes from the identification $\derivedInnermost_i(2n,2i) \cong \Sigma^{0,i} \CPL(2n)$ from \cref{lemma: E_i E_i (B) 0}, which forgets the unique maximal set of dashed cups. This again commutes with the hook map, since the hook map fixes dashed cups.
\end{proof}

\begin{corollary} \label{cor: second shift on Tor} For $n \geq 1$,
$$\Tor_q^{\TL_{2n}}(R,\cell(2n,0)) \cong \begin{cases}
    0 & q < n-1 \\
    H_{q-n+1}(\CPL(2n)) & q \geq n-1.
\end{cases}$$
\end{corollary}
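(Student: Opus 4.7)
The plan is to apply \cref{prop: suspension of dga is derived cell} in the case $i = n$ and combine it with the left--right anti-involution of the Temperley--Lieb category. The key combinatorial observation is that the cell module $\cell(2n, 2n)$ is isomorphic to the trivial left $\TL_{2n}$-module $R$: a diagram in $\TL(2n, 2n)$ without right cups must match each of the $2n$ right nodes to a distinct left node, and planarity forces this matching to be the identity diagram. The quotient thus has a single basis element on which $\TL_{2n}$ acts via its augmentation.

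Applying \cref{prop: suspension of dga is derived cell} with $i = n$ therefore yields $\derivedCell(2n, 2n) \simeq \Sigma^n \CPL(2n)$, hence $H_q(\derivedCell(2n, 2n)) \cong H_{q - n}(\CPL(2n))$. On the other hand, since $\cell(0, 2n) = 0$ for $n \geq 1$ (every diagram in $\TL(0, 2n)$ consists entirely of right cups), the cone definition of the derived cell module from \cref{def: derived innermost and cell} simplifies to a suspension of a bar construction, $\derivedCell(2n, 2n) \cong \Sigma \Bar(\TL(0, 2n), \TL_{2n}, R)$. Taking homology and reindexing $j = q - 1$ gives
\[
\Tor^{\TL_{2n}}_j(\TL(0, 2n), R) \cong H_{j - n + 1}(\CPL(2n)) \quad \text{for all } j \geq 0.
\]

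To conclude, use the left--right anti-involution of the Temperley--Lieb category (reflecting diagrams along the vertical axis). This identifies the right $\TL_{2n}$-module $\TL(0, 2n)$ with the left $\TL_{2n}$-module $\TL(2n, 0) = \cell(2n, 0)$ and fixes the trivial module $R$, giving a canonical isomorphism $\Tor^{\TL_{2n}}_q(R, \cell(2n, 0)) \cong \Tor^{\TL_{2n}}_q(\TL(0, 2n), R)$. Combining with the previous display produces the claimed formula, and the vanishing for $q < n - 1$ follows automatically from $H_{q - n + 1}(\CPL(2n)) = 0$ in negative degrees.

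Given \cref{prop: suspension of dga is derived cell}, the argument is essentially formal and short; the main step requiring attention is the identification $\cell(2n, 2n) \cong R$, which is what allows us to reduce the computation of $\Tor^{\TL_{2n}}(R, \cell(2n, 0))$ to the case $i = n$ of the derived cell module.
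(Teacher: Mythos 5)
Your proposal is correct and follows essentially the same route as the paper's own proof: apply \cref{prop: suspension of dga is derived cell} with $i=n$, identify $\cell(2n,2n)\cong R$ and $\cell(0,2n)=0$ so that $\derivedCell(2n,2n)\cong \Sigma\Bar(\TL(0,2n),\TL_{2n},R)$, and then transport $\Tor_*^{\TL_{2n}}(\TL(0,2n),R)$ to $\Tor_*^{\TL_{2n}}(R,\cell(2n,0))$ via the reflection anti-involution. The only difference is that you spell out the (correct) planarity argument for $\cell(2n,2n)\cong R$, which the paper takes as known.
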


\begin{proof} We apply \cref{prop: suspension of dga is derived cell} in the case $i=n$. Here $\cell(2n,2n)=R$, the trivial module, and $\cell(0,2n) = 0$ by definition. The $L(2n)$-module $\derivedCell(2n,2n)$ therefore takes the form 
$$\mathrm{Cone}(\mathrm{Bar}(\TL(0,2n),\TL_{2n},R) \to 0) \cong \Sigma \mathrm{Bar}(\TL(0,2n),\TL_{2n},R),$$
so 
$$H_q(\derivedCell(2n,2n)) \cong \Tor_{q-1}^{\TL_{2n}}(\TL(0,2n),R).$$ The Temperley--Lieb category admits an (anti-)involution, given by reflecting diagrams: $\TL(i,j) \cong \TL(j,i)$, so $\Tor_{q-1}^{\TL_{2n}}(\TL(0,2n),R) \cong \Tor_{q-1}^{\TL_{2n}}(R, \TL(2n,0))$. As $\TL(2n,0) = \cell(2n,0)$, combing this with $H_q(\derivedCell(2n,2n)) \cong H_{q-n}(L(2n))$ from \cref{prop: suspension of dga is derived cell} completes the proof. 
\end{proof}

Combining this with a result of the last author, we may now deduce \cref{theorem: shift}.

\begin{proof}[Proof of \cref{theorem: shift}] In \cite[Theorem B]{Sroka}, it was shown that
$$\Tor_q^{\TL_{2n}}(R,R) \cong \begin{cases}
    R & q = 0 \\
    0 & 0 < q < n \\
    \Tor_{q-n}^{\TL_{2n}}(R,\cell(2n,0)) & q \geq n.
\end{cases}$$
Combining the final identification with \cref{cor: second shift on Tor} gives the result.
\end{proof}

\section{Resolving the dga of planar loops}\label{section:proof of main technical thm}

In this section we piece together our results so far to prove \cref{theorem:derived-outermost-cup-complex-resolution}. Throughout, we assume $a=0$ so that the derived complex of outermost cups $(\derivedOutermost_{*}(2n), \partial)$ is defined as in \cref{def: derived outermost}, and has an additional weight grading (\cref{lem: weight on outermost}).

\begin{lemma} \label{lemma: outer boundary deg ii}
    Let $i \geq 1$. The $\CPL(2n)$-module map
    $$\Sigma^{i,i}\CPL(2n) \underset{\mathclap{\text{Prop.\ \ref{prop: suspension of dga is derived cell}}}}{\simeq} \derivedCell(2n,2i) \underset{\mathclap{\text{Lem.\ \ref{lem:OutIsCell}}}}{\cong} \derivedOutermost_{i}(2n) \overset{\partial_i}\to  \derivedOutermost_{i-1}(2n) \underset{\mathclap{\text{Lem.\ \ref{lem:OutIsCell}}}}{\cong} \derivedCell(2n,2(i-1)) \underset{\mathclap{\text{Prop.\ \ref{prop: suspension of dga is derived cell}}}}{\simeq} \Sigma^{i-1,i-1}\CPL(2n)$$
is given by right multiplication by $i[\Phi] \in H_{1,1}(\CPL(2n))$.
\end{lemma}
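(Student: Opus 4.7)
The plan is to compute the chain-level effect of $\partial_i$ on an explicit cycle representing $1 \in H_{i,i}(\Sigma^{i,i}\CPL(2n))$ and identify the resulting class as $i\Phi$. By the proof of \cref{prop: suspension of dga is derived cell}, the equivalence $\Sigma^{j,j}\CPL(2n) \simeq \derivedCell(2n,2j)$ is realised by a zigzag through the totalisation $[\derivedInnermost_0(2n,2j) \leftarrow \cdots \leftarrow \derivedInnermost_j(2n, 2j)]$: one leg is the inclusion of the top column $\derivedInnermost_j(2n,2j) \cong \Sigma^{0,j}\CPL(2n)$ (\cref{lemma: E_i E_i (B) 0}), the other is the augmentation to $\derivedCell$. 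The extended face maps $\widetilde{d_k}$ of \cref{lemma: new extension from submaximal} preserve the column filtration and assemble into $\sum_k(-1)^k\widetilde{d_k}$, a chain map of totalisations lifting $\partial_i$ on $\derivedCell$.

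I would construct a cycle in the source totalisation representing $1$ as follows. Start with $\Lmax^{\mathrm{dashed}} \in \InnermostCupComplex_i(0,2i) = \derivedInnermost_i(2n,2i)_0$; its total differential is $\sum_{k=0}^{i-1}(-1)^k u_k$, where $u_k \in \InnermostCupComplex_{i-1}(0, 2i)$ is $\Lmax$ with its $k$-th cup undashed. By \cref{lemma: wind sock} in direct analogy with \cref{lemma: phiL - phiR'}, factor $u_k = \Phi_l \cdot B_k$ where $B_k := \Phi_r' \cdot z_k'$ and $z_k' \in \TL(2, 2i)$ is $\Lmax$ with the $k$-th cup cut open. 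Equipping $B_k$ with the submaximal dashing (all cups of $\Lmax$ except the $k$-th), the element $c_k := (\Phi_l \mid B_k) \in \derivedInnermost_{i-1}(2n,2i)_1$ has vertical boundary $u_k$. Thus $\Lmax^{\mathrm{dashed}} - \sum_k (-1)^k c_k$ is a cycle modulo columns $p < i-1$, and iterated corrections using the acyclicity of those columns (\cref{lem:DInnAcyclic}) produce a genuine cycle $c$.

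Finally, I would apply the chain lift to $c$. On the top-column term $\Lmax^{\mathrm{dashed}}$, every $\widetilde{d_j}$ vanishes by \cref{def: face map lift to submax} (the full maximal dashed set contains each cup $(2j+1,2j+2)$); on each first-level correction $c_k$, only $\widetilde{d_k}$ is non-zero (for $j \neq k$, the dashed set of $B_k$ contains $(2j+1,2j+2)$). The identity $\Phi_r' \cdot L_1 = \Phi_r$ from \cref{defn:phir and phil} generalises to show that, under the isomorphism $\InnermostCupComplex_{i-1}(2n,2(i-1)) \cong \TL(2n,0)$ of \cref{lemma: E_i E_i (B) 0}, the diagram $B_k \cdot L_{2k+1}$ equals $\Phi_r$, whence $\widetilde{d_k}(c_k) = (\Phi_l \mid \Phi_r) = \Phi$ in $\derivedInnermost_{i-1}(2n,2(i-1))_1 \cong \Sigma^{0,i-1}\CPL(2n)_1$. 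The signs $(-1)^k$ from $c_k$ and from the chain lift combine to $+1$, so each $k \in \{0, \dots, i-1\}$ contributes $+\Phi$. Contributions from the higher corrections land in columns $p < i-1$ of the target, which are acyclic, so do not contribute in homology. The total is $i\Phi$.

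The main obstacle will be the combinatorial and sign bookkeeping: precisely describing each $B_k$ together with its dashed cups, explicitly computing $B_k \cdot L_{2k+1}$ to confirm the identification with $\Phi_r$ in general $i$, and verifying that all signs cooperate so the $i$ contributions add rather than cancel.
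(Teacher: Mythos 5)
Your proposal is correct and follows essentially the same route as the paper: the paper likewise reduces to the two-column portion $[\derivedInnermost_{i-1}(2n,2i)\leftarrow\derivedInnermost_i(2n,2i)]$, takes the all-dashed generator, lifts its boundary to $\sum_j(-1)^{j+1}\Phi_l\otimes\Phi_r'\cdot C^{\text{dashed}\backslash j}$ exactly as in your $c_k$'s, and evaluates $\widetilde{\partial}_i$ via submaximality to get $i\cdot\Phi_l\otimes\Phi_r$. The sign bookkeeping you flag does work out ($(-1)^{j+1}\cdot(-1)^{j-1}=+1$), and the lower-column contributions are killed because the identification with the top column is the projection quasi-isomorphism, just as you say.
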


\begin{proof}
Recall from \cref{lemma: new extension from submaximal}
that the boundary map $\partial_i \colon \OutermostCupComplex_i(2n) \to \OutermostCupComplex_{i-1}(2n)$ can be extended to a map $\widetilde{\partial_i} \colon \InnermostCupComplex_*(2n,2i) \to \InnermostCupComplex_*(2n,2i-2)$
of innermost cup complexes (that lemma shows something stronger: each face map $d_k$ comprising $\partial_i$ can be extended to a map $\widetilde{d_k}$). This induces a map of double complexes as follows.
\begin{equation*}
\begin{tikzcd}[column sep=tiny]
     \derivedOutermost_i(2n) \ar[d,"\partial_i"] & \ar[l, two heads] \ar[d,"\widetilde{\partial_i}"] \derivedInnermost_0(2n,2i)  & \ar[l] \cdots & \ar[l] \ar[d,"\widetilde{\partial_i}"] \derivedInnermost_{i-1}(2n,2i) & \ar[l] \ar[d,"\widetilde{\partial_i}"] \derivedInnermost_{i}(2n,2i) & \ar[l] 0 \\
     \derivedOutermost_{i-1}(2n) & \ar[l, two heads] \derivedInnermost_0(2n,2(i-1))  &\ar[l] \cdots & \ar[l] \derivedInnermost_{i-1}(2n,2(i-1)) & \ar[l] 0.
\end{tikzcd}
\end{equation*}

As the chain complexes $\derivedInnermost_0(2n,2i), \ldots, \derivedInnermost_{i-2}(2n,2i)$ and $\derivedInnermost_0(2n,2(i-1))$, \ldots, $\derivedInnermost_{i-2}(2n,2(i-1))$ are all acyclic, the same argument as in the proof of \cref{prop: suspension of dga is derived cell} gives compatible zig-zags of quasiisomorphisms
\begin{align*}
\derivedOutermost_i(2n) &\simeq \Sigma^{i-1}[\derivedInnermost_{i-1}(2n,2i)\leftarrow \derivedInnermost_{i}(2n,2i)]\\
\derivedOutermost_{i-1}(2n) &\simeq \Sigma^{i-1}\derivedInnermost_{i-1}(2n,2(i-1)),
\end{align*}
and therefore identifies the map in the statement of the lemma with the map of totalisations
\begin{equation*}
    \begin{tikzcd}
        {[}\derivedInnermost_{i-1}(2n,2i) \dar{\widetilde{\partial}_i} & \arrow[l, swap, "D\delta_i"] \derivedInnermost_{i}(2n,2i){]} \dar\\
        {[}\derivedInnermost_{i-1}(2n,2(i-1)) & \lar 0 {]}.
    \end{tikzcd}
\end{equation*}
As $\derivedInnermost_{i-1}(2n,2i)$ is also acyclic, we have preferred identifications
\begin{align*}
    H_1([\derivedInnermost_{i-1}(2n,2i)\leftarrow \derivedInnermost_{i}(2n,2i)]) &\cong H_0(\derivedInnermost_{i}(2n,2i)) \underset{\mathclap{\text{Lem.\ \ref{lemma: E_i E_i (B) 0}}}}{\cong} H_0(\CPL(2n)) = R\{\emptyset\}\\
    H_1(\derivedInnermost_{i-1}(2n,2(i-1))) &\underset{\mathclap{\text{Lem.\ \ref{lemma: E_i E_i (B) 0}}}}{\cong} H_1(\CPL(2n)).
\end{align*}
In \cref{sec:Phi} we discussed the diagram $\Phi \in \CPL(2n)_1$, which is a cycle when $a=0$ and so gives a homology class $[\Phi] \in H_1(\CPL(2n))$. As the map we are studying is one of $\CPL(2n)$-modules, and the domain is a free module, the map is determined by where it sends the module generator. We need to prove that, under the identifications given above, the generator $\emptyset$ is sent to $i[\Phi] \in H_{1,1}(\CPL(2n))$.

Consider the maps between these chain complexes in low degrees:
\begin{equation*}
\begin{tikzcd}
\InnermostCupComplex_i(0,2i) \dar{\delta_i} & \TL(0,2n) \otimes  \InnermostCupComplex_i(2n,2i) \dar{\delta_i} \lar & \cdots \lar\\
\InnermostCupComplex_{i-1}(0,2i) \dar{\widetilde{\partial}_i} &  \TL(0,2n)  \otimes \InnermostCupComplex_{i-1}(2n,2i) \dar{\widetilde{\partial}_i} \arrow[l, swap, "d^{i-1}"] & \cdots \lar\\
\InnermostCupComplex_{i-1}(0,2(i-1)) & \TL(0,2n) \otimes \InnermostCupComplex_{i-1}(2n,2(i-1)) \lar & \cdots \lar
\end{tikzcd}
\end{equation*}
To evaluate the map in question, first note that the canonical generator $\emptyset \in \CPL(2n)$ given by the empty diagram corresponds, under the isomorphism of \cref{lemma: E_i E_i (B) 0}, to the diagram $R_{[2i]}^\text{dashed} \in \InnermostCupComplex_i(0,2i)$ consisting of $i$ innermost right cups, all dashed. To evaluate the map we must form $\delta_i(R_{[2i]}^\text{dashed})$, use that the middle chain complex $\derivedInnermost_{i-1}(2n,2i)$ is acyclic to choose a lift $z \in \TL(0,2n)  \otimes \InnermostCupComplex_{i-1}(2n,2i)$ along $d^{i-1}$, and then $\widetilde{\partial}_i(z) \in \TL(0,2n) \otimes \InnermostCupComplex_{i-1}(2n,2(i-1))$ will be a cycle representing the desired first homology class of $\derivedInnermost_{i-1}(2n,2(i-1))$. (Any two choices of lifts $z$ differ by a boundary in the middle chain complex, so the result is a well-defined homology class.)

This calculation is displayed the next diagram; details for each step are given below.
\begin{equation*}
\begin{tikzcd}
R_{[2i]}^\text{dashed} \ar[d, "\delta_i", mapsto] &  & \\
\sum_{j = 1}^i (-1)^{j+1} R_{[2i]}^{\text{dashed\textbackslash}j} &  z \coloneqq \sum_{j = 1}^i (-1)^{j+1} \Phi_l \otimes \Phi_r^j \ar[d, "\widetilde{\partial}_i", mapsto] \arrow[l, swap, "d^{i-1}", mapsto] & \\
 & i \cdot \Phi_l \otimes  \Phi_r \cdot R_{[2i-2]}^\text{dashed} &
\end{tikzcd}
\end{equation*}
For first step, we apply \cref{defn:complex of innermost cups algebraic} and denote by $R_{[2i]}^{\text{dashed\textbackslash}j}$ the diagram obtained from $R_{[2i]}^\text{dashed}$ by undashing its $j$th innermost right cup. For the second step, we observe that $R_{[2i]}^{\text{dashed\textbackslash}j}$ can be written as a product $\Phi_l \cdot \Phi_r^j$ by ``pinning its undashed right cup to a new vertical bar on the left'' (as on the left side of \cref{{fig: bubbleWriting1}}). More precisely, if $\Phi_l \in \TL(0, 2n)$ and $\Phi_r' \in \TL(2n, 2)$ are as in \cref{defn:phir and phil} and $C_{[2i]}^{\text{dashed\textbackslash}j} \in \TL(2,2i)$ denotes the diagram obtained from $R_{[2i]}^{\text{dashed\textbackslash}j}$ by cutting the undashed $j$th cup open and connecting to nodes on the left, then $R_{[2i]}^{\text{dashed\textbackslash}j} = \Phi_l \cdot \Phi_r' \cdot C_{[2i]}^{\text{dashed\textbackslash}j}$, we set $\Phi_r^j \coloneqq \Phi_r' \cdot C_{[2i]}^{\text{dashed\textbackslash}j} \in \InnermostCupComplex_{i-1}(2n,2i)$ so that $d^{i-1}(\Phi_l \otimes \Phi_r^j) = R_{[2i]}^{\text{dashed\textbackslash}j}$. For the final step, we note that $\Phi_r^j$ lies in the complex of submaximal cups and that we may hence evaluate the map $\widetilde{\partial}_i$ constructed in \cref{lemma: new extension from submaximal} on $\Phi_r^j$ using \cref{def: face map lift to submax}. Since each $\Phi_r^j$ has all dashed cups from $R_{[2i]}^\text{dashed}$ except the one from $2j-1$ to $2j$, we obtain
$$\widetilde{d_k}(\Phi_r^j) = \begin{cases}
    \Phi_r \cdot R_{[2i-2]}^\text{dashed} & k=j-1\\
    0 & \text{else},
\end{cases} $$
so that $\widetilde{\partial}_i(\Phi_r^j) = (-1)^{j-1} \Phi_r \cdot R_{[2i-2]}^\text{dashed}$, giving
$$\widetilde{\partial}_i(z) = \sum_{j=1}^{i} (-1)^{j+1} \Phi_l \otimes (-1)^{j-1} \Phi_r \cdot R_{[2i-2]}^\text{dashed}  = i \cdot \Phi_l \otimes  \Phi_r \cdot R_{[2i-2]}^\text{dashed} \text{ as claimed.}$$

To finish this proof, we recall that the isomorphism $\CPL(2n) \simeq \derivedInnermost_{i-1}(2n,2(i-1))$ is given by right multiplication by the diagram $R_{[2i-2]}^\text{dashed}$, so the above corresponds to the 1-cycle $i \cdot \Phi_l \otimes \Phi_r$ in $\CPL(2n)$, which on homology is $i \cdot \Phi$.
\end{proof}

We are now ready to prove the main technical result of the paper, \cref{theorem:derived-outermost-cup-complex-resolution}, which we restate here.
\begin{restate}{Theorem}{theorem:derived-outermost-cup-complex-resolution}
         Let $R$ be concentrated in rank zero, let $a = 0$, and consider the differential bigraded algebra $\CPL(2n)=\CPL(2n; R, 0)$. Then the trivial module $R$ of $\CPL(2n)$ $($concentrated in bidegree $(0,0))$ admits a resolution
    $$0 \leftarrow R \leftarrow \derivedOutermost _0(2n) \leftarrow \derivedOutermost _1(2n) \leftarrow \derivedOutermost _2(2n) \leftarrow \cdots \leftarrow \derivedOutermost _n(2n) \leftarrow 0$$
    of left modules $\derivedOutermost_i(2n)$ over $\CPL(2n)$ with the following properties:
    \begin{enumerate}
        \item There are preferred equivalences $w_i\colon \Sigma^{i,i} \CPL(2n) \xrightarrow{\simeq} \derivedOutermost_i(2n)$ of differential bigraded modules over $\CPL(2n)$ such that 
        $$\Sigma^{i-1,i-1} \CPL(2n) \simeq \derivedOutermost_{i-1}(2n) \leftarrow \derivedOutermost_i(2n) \simeq \Sigma^{i,i} \CPL(2n)$$
        is given by right multiplication by $i\Phi$.
        \item The hook map $h\colon \CPL(2n) \to \CPL(2n+2)$ extends to a map of resolutions $\derivedOutermost_*(2n) \to \derivedOutermost_*(2n+2)$. The $i$-th term of this map corresponds to $\Sigma^{i,i}h$ under the equivalences $w_i$.
    \end{enumerate}
\end{restate}

\begin{proof} We show that the derived complex of outermost cups $\derivedOutermost_*(2n)$ is acyclic (i.e.~a resolution), and that Properties $(a)$ and $(b)$ are satisfied.

Acyclicity follows from \cref{lem: derived horizontally acyclic}.

Note that when $i \geq 1$, \cref{prop: suspension of dga is derived cell} gives a preferred equivalence of $\CPL(2n)$-modules (also for $i=0$, tautologically):
$$\Sigma^{i,i} \CPL(2n) \simeq \derivedCell(2n,2i) \cong \derivedOutermost_i(2n).$$

For Property $(a)$, note that under the above equivalences, \cref{lemma: outer boundary deg ii} shows that the $\CPL(2n)$-module map
$$\Sigma^{i,i} \CPL(2n) \simeq \derivedOutermost_i(2n) \to \derivedOutermost_{i-1}(2n) \simeq \Sigma^{i-1,i-1} \CPL(2n)$$
is given by right multiplication by $i\Phi$.

For Property $(b)$, the hook map $\derivedOutermost_*(2n) \to \derivedOutermost_*(2n+2)$ was defined in \cref{derived weights and hooks} to correspond to the hook map $\derivedCell(2n,2i) \to \derivedCell(2n+2,2i)$, which in turn corresponds to $\Sigma^{i,i} h$ by \cref{prop: suspension of dga is derived cell}.
\end{proof}

\emergencystretch=10em\emergencystretch=10em
\printbibliography
\end{document}